\newtheorem{thm}{Theorem}[section]
\newtheorem{conjecture}[thm]{Conjecture}
\newtheorem{obs}[thm]{Observation}
\newtheorem{cor}[thm]{Corollary}
\newtheorem{problem}[thm]{Problem}
\newtheorem{prop}[thm]{Proposition}
\newtheorem{lemma}[thm]{Lemma}
\newtheorem{define}[thm]{Definition}
\renewcommand{\mathscr}{\mathcal}
\def\Aut{\mathrm{Aut}}
\begin{document}
\title{Intersections of longest cycles in vertex-transitive and highly connected graphs}

\author{Jie Ma\thanks{School of Mathematical Sciences, University of Science and Technology of China, Hefei, Anhui 230026, China, and Yau Mathematical Sciences Center, Tsinghua University, Beijing 100084, China.
Research supported by National Key Research and Development Program of China 2023YFA1010201 and National Natural Science Foundation of China grant 12125106. Email: {\tt jiema@ustc.edu.cn}}
\and Ziyuan Zhao\thanks{School of Mathematical Sciences, University of Science and Technology of China, Hefei, Anhui 230026, China. Research supported by Innovation Program for Quantum Science and Technology 2021ZD0302902. Email: {\tt zyzhao2024@mail.ustc.edu.cn}}
}
\date{\today}
\maketitle

\begin{abstract}
Motivated by the classical conjectures of Lov\'asz, Thomassen, and Smith, recent work has renewed interest in the study of longest cycles in important graph families, such as vertex-transitive and highly connected graphs.
In particular, Groenland et al.\ proved that if two longest cycles and in a graph share $m$ vertices, then there exists a vertex cut of size $O(m^{8/5})$ separating them, yielding improved bounds toward these conjectures. Their proof combines Tur\'an-type arguments with computer-assisted search.

We prove two results addressing problems of Babai (1979) and Smith (1984) on intersections of longest cycles in vertex-transitive and highly connected graphs. First, we strengthen the bound of Groenland et al.\ by showing that if two longest cycles and in a graph share $m$ vertices, then there exists a vertex cut of size $O(m^{3/2})$ separating them. As a consequence, we show that in every \(k\)-connected graph, any two longest cycles intersect in at least \(\Omega(k^{2/3})\) vertices, improving the best known bound toward Smith's conjecture. Our proof is purely combinatorial, employing supersaturation-type estimates beyond the existing Tur\'an-type approach.  
Second, we prove that in every connected vertex-transitive graph on \(n\) vertices, any two longest cycles intersect in at least \(f(n)\) vertices for some function \(f(n)\to\infty\) as \(n\to\infty\), thereby resolving a problem of Babai (1979) for the class of vertex-transitive graphs central to his original motivation. In doing so, we introduce a new method for constructing longer cycles in vertex-transitive graphs based on a given cycle, which may be of independent interest.
\end{abstract}

\section{Introduction}
The study of longest cycles is a central theme in both extremal and structural graph theory. Our work is motivated by several longstanding problems in this area, including two conjectures of Lovász \cite{lovasz1969combinatorial} and Thomassen (cf. \cite{barat2010his}) on vertex-transitive graphs, a conjecture of Smith (cf. \cite{bondy1996basic}) on highly connected graphs, and a problem of Babai \cite{babai1979long}, originally motivated by vertex-transitive graphs but posed implicitly for 3-connected cubic graphs. The first two conjectures concern Hamiltonicity, while the latter two address the intersection sizes of longest cycles.
Our main results make progress toward these problems and highlight closer connections between them.
Throughout, we denote by $c(G)$ the length of a longest cycle in the graph $G$.

\subsection{Background}
A graph $G$ is said to be {\it vertex-transitive} if, for any two vertices $u,v \in V(G)$, there exists an automorphism of $G$ that maps $u$ to $v$.
The classical conjectures of Lov\'asz~\cite{lovasz1969combinatorial} and Thomassen (cf.~\cite{barat2010his}) assert that every connected vertex-transitive graph on \( n \ge 3 \) vertices contains a Hamiltonian cycle (resp.\ path), with only finitely many exceptions. These conjectures remain widely open despite extensive study (see the survey \cite{KM09}, more recent papers\cite{du2021resolving,christofides2014hamilton}, and the references therein). 
In 1979, Babai~\cite{babai1979long} proved that every connected vertex-transitive graph \( G \) on \( n \geq 3 \) vertices contains a cycle of length at least \( \Omega(\sqrt{n}) \). His elegant proof proceeds by double-counting the number \( T \) of triples \( \{C, D, v\} \), where \( C \) and \( D \) are distinct longest cycles in \( G \) and \( v \in V(C) \cap V(D) \) is a common vertex.
Since \( G \) is vertex-transitive, every vertex lies in exactly \( d \geq 1 \) longest cycles, so \( G \) contains exactly \( nd/c(G) \) distinct longest cycles.
We may assume that $G$ is 3-connected.\footnote{Unless $G$ is 2-regular (see~\cite{godsil2013algebraic}), in which case it is simply the cycle \( C_n \) and the result of Babai holds trivially.} 
Since any two longest cycles in every 3-connected graph intersect in at least three vertices,\footnote{We will elaborate on this fact and its generalizations shortly; see Conjecture~\ref{conj:Smith}.} we have
\[
n \binom{d}{2} = T \ge 3 \cdot \binom{nd/c(G)}{2},
\]
which implies that \( c(G) \ge \sqrt{3n} \).
Observe that if any two longest cycles in such a graph \( G \) intersect in at least \( f(n) \) vertices, then one immediately obtains the stronger bound \( c(G) \geq \sqrt{n\cdot f(n)} \).
Motivated by this observation, Babai posed the following problem in the same paper~\cite{babai1979long}, restricting attention to 3-connected cubic graphs as an initial case.

\begin{problem}[Babai~\cite{babai1979long}, Problem~2]\label{prob:Babai}
Is there a function \( f(n) \to \infty \) as \( n \to \infty \) such that in every 3-connected cubic graph on \( n \) vertices, any two longest cycles intersect in at least \( f(n) \) vertices?
\end{problem}

\noindent It is therefore natural to ask the analogous question for all connected vertex-transitive graphs.

\begin{problem}\label{prob:vert-tran}
Is there a function \( f(n) \to \infty \) as \( n \to \infty \) such that in every connected vertex-transitive graph on \( n \) vertices, any two longest cycles intersect in at least \( f(n) \) vertices?
\end{problem}

The $\Omega(n^{1/2})$ bound on the length of the longest cycle in connected vertex-transitive graphs stood for over four decades, until DeVos~\cite{devos2023longer} improved it in 2023.  
Using the Orbit-Stabilizer Theorem, 
DeVos~\cite{devos2023longer} established a notable double-counting lemma relating the problem to the {\it transversal} of longest cycles, a concept studied extensively (see \cite{Gallai68,kierstead2023improved,long2021sublinear,rautenbach2014transversals}); a subset of vertices in a graph $G$ is called a {\it $t$-transversal} if it contains at least $t$ vertices from every longest cycle in $G$.  
His lemma shows that if a connected vertex-transitive graph $G$ on $n$ vertices contains a $t$-transversal $A$, then
\begin{align}\label{equ:DeVos}
 c(G) \geq tn/|A|.   
\end{align}
Combining this with the simple fact that if two longest cycles and in a graph share $m$ vertices, then there exists a vertex cut of size at most $m^2$ separating them (which also forms a $1$-transversal; see Proposition~\ref{prop:intersect}), 
DeVos~\cite{devos2023longer} showed that $c(G) = \Omega(n^{3/5})$
for every connected vertex-transitive graph $G$ on \(n\) vertices.
Recently, Groenland, Longbrake, Steiner, Turcotte, and Yepremyan~\cite{groenland2024longest} gave a unified treatment concerning longest cycles in vertex-transitive and highly connected graphs. 
Their work extends DeVos' approach and shows that for any connected vertex-transitive graph $G$,
$c(G) = \Omega\bigl(n^{13/21}\bigr).$
The central result underlying their unified approach is the following, whose proof combines Tur\'an-type arguments with computer-assisted search and linear programming.
\begin{thm}[Groenland et al.~\cite{groenland2024longest}, Lemma 1.4]\label{thm:GLSTY}
If two longest cycles in a graph $G$ share $m$ vertices, then there exists a vertex cut of size $O(m^{8/5})$ separating these two cycles.
\end{thm}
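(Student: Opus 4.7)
The plan is to reduce the statement to Menger's theorem. Setting $S = V(C)\cap V(D)$, $A = V(C)\setminus V(D)$, and $B = V(D)\setminus V(C)$, any vertex cut separating $C$ from $D$ must contain all of $S$, so it suffices to add to $S$ (of size $m$) a minimum $A$--$B$ separator in $G-S$. By Menger's theorem, the latter equals the maximum number $k$ of internally disjoint $A$--$B$ paths in $G-S$, reducing the task to showing $k = O(m^{8/5})$. To parameterize these paths, $S$ partitions $C$ into arcs $\alpha_1,\ldots,\alpha_m$ with interiors in $A$, and $D$ into arcs $\beta_1,\ldots,\beta_m$ with interiors in $B$. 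Each of the $k$ disjoint paths has one endpoint in the interior of some $\alpha_i$ and the other in the interior of some $\beta_j$, giving a labelled bipartite multigraph $H \subseteq [m]\times[m]$ with exactly $k$ edges whose density must be bounded.

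The heart of the proof is a Tur\'an-type forbidden-pattern analysis driven by the longest-cycle hypothesis. Given a small subcollection of paths $P_1,\ldots,P_t$ whose image in $H$ forms a prescribed pattern, I would build a candidate cycle in $G$ by a rotation scheme: starting on $C$, reroute via some $P_k$ onto $D$, traverse a stretch of $D$, rotate back via some $P_\ell$ onto $C$, and so on. The cyclic order in which the arc-endpoints of the $P_k$'s appear around $C$ and $D$ dictates which such sequences produce simple closed walks, and the resulting cycle has length equal to the sum of the used arc-lengths of $C$ and $D$ together with $\sum_k|P_k|$. Comparing this with $|C|=|D|$ forces certain patterns to be absent from $H$; otherwise one produces a strictly longer simple cycle, contradicting the extremality of $C$ and $D$. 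Ruling out such patterns, one bounds $|E(H)|$ by a Zarankiewicz-type inequality, with the exponent $8/5$ reflecting the particular family of forbidden patterns.

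The principal obstacle is that a naive one-path rotation only yields the trivial $O(m^2)$ bound already available from the simple cut estimate, so extracting the exponent $8/5$ requires simultaneously exploiting multi-path configurations together with the interaction of the cyclic orderings on both cycles. The bookkeeping grows rapidly: one must track how the endpoints of several $P_k$'s interleave along each arc, which pairs of arcs share endpoints in $S$, and whether candidate closed walks remain simple. This is precisely where Groenland et al.\ invoke a computer-assisted enumeration of the relevant local configurations together with a linear program whose optimum controls the edge density of $H$, producing the $8/5$ exponent. A secondary subtlety is ensuring the constructed cycles are genuinely simple rather than merely closed walks, which I would handle by refining each edge of $H$ to record positional data along its two arcs, so that a pattern is counted only when the induced rotation provably avoids self-intersection.
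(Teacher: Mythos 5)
The statement you are asked to prove is a \emph{cited} result of Groenland et al.\ (their Lemma~1.4); the paper under review does not give a proof of it but rather proves a strictly stronger bound, Theorem~\ref{thm:lct upp}, with exponent $3/2$ in place of $8/5$. So there is no internal proof to compare against literally; what can be compared is your outline against (i) what the paper reports about the Groenland et al.\ proof, and (ii) the machinery the paper builds to prove the stronger result.

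Your outline matches the Groenland et al.\ strategy accurately at the level of architecture: reduce to bounding the number of disjoint $(X-M,Y-M)$-paths via Menger, encode these paths as edges of a bipartite auxiliary graph on the arc-sets of the two cycles, forbid certain ``rotation patterns'' by constructing a longer cycle from any occurrence, and apply a Zarankiewicz-type bound. Two genuine gaps remain, though. First, you describe $H$ as a multigraph, but Zarankiewicz/Tur\'an-type bounds control the edge count only of a \emph{simple} bipartite graph; to even state the bound you must first show that no two disjoint $(X-M,Y-M)$-paths can share the same arc pair $(\alpha_i,\beta_j)$. That is a short length-comparison argument (Proposition~\ref{prop:X-Y path} in the paper), but without it the reduction does not get off the ground. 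Second, and more importantly, the entire quantitative core --- which patterns are forbidden, which extremal number governs $e(H)$, and why the exponent is exactly $8/5$ --- is deferred to a black box (``where Groenland et al.\ invoke a computer-assisted enumeration\ldots''). As written this is a plan for a proof, not a proof; the exponent $8/5$ is asserted, not derived.

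As a point of contrast that is worth internalizing: the paper avoids the computer search entirely by abandoning the pure ``$F$ is $H$-free'' paradigm. Instead it establishes a \emph{supersaturation} bound --- the auxiliary graph $F$ contains only $O(m)$ copies of $K_{2,7}$ (via Lemmas~\ref{lem:type 00}, \ref{lem:crossing int}, \ref{lem:K29 error}, \ref{lem:upp bound noncross}) --- and a convexity/counting argument then forces $e(F)=O(m^{3/2})$. That both sharpens the exponent and makes the argument hand-checkable, whereas the route you sketch bottoms out in a case analysis too large to do by hand. Your ``secondary subtlety'' about distinguishing simple cycles from closed walks is real, and is exactly what the paper's notion of a 4-cycle's \emph{type} (Definition~\ref{def:C4 types}), tracking the relative order of path endpoints along both cycles, is designed to resolve.
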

\noindent More recently, Norin, Steiner, Thomass\'e, and Wollan~\cite{norin2025small} achieved a further improvement, showing that 
$c(G) = \Omega\bigl(n^{9/14}\bigr)$
for every connected vertex-transitive graph $G$ on $n$ vertices. 
Using equation \eqref{equ:DeVos}, their approach is to upper-bound the size of certain $1$-transversals \(A\) as a function of \(c(G)\), in contrast to \cite{devos2023longer,groenland2024longest}, where the bound is expressed in terms of the intersection size of two longest cycles (cf. Theorem~\ref{thm:GLSTY}).

Another motivating problem is a longstanding conjecture on the intersection size of two longest cycles in highly connected graphs, attributed to S.~Smith and traceable to at least 1984. 
As noted earlier in Babai's work~\cite{babai1979long}, such insights can be valuable for addressing other problems on cycles.

\begin{conjecture}[Smith's Conjecture; see, e.g., Conjecture~4.15 in \cite{bondy1996basic}]\label{conj:Smith}
    For every \( k \ge 2 \), any two longest cycles in a \( k \)-connected graph intersect in at least \( k \) vertices. 
\end{conjecture}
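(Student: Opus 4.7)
The plan is to attack the conjecture through the cut-size paradigm developed in the excerpted literature: show that if two longest cycles $C$ and $D$ in $G$ share only $m$ vertices, then there is a vertex cut of size at most $m$ that separates $V(C)\setminus V(D)$ from $V(D)\setminus V(C)$. In a $k$-connected graph every cut has size at least $k$, so this would force $m \geq k$ and prove the conjecture. Classical arguments handle $k = 2, 3$ by hand, so the genuine target is a uniform cut-size bound that is linear in $m$.

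The proof would proceed in three steps. First, I would record the structural setup: the shared vertices $v_1, \dots, v_m$ partition $C$ into arcs $C_1, \dots, C_m$ and $D$ into arcs $D_1, \dots, D_m$. Let $\mathcal{P}$ be a maximum system of internally vertex-disjoint paths between $V(C)\setminus V(D)$ and $V(D)\setminus V(C)$ in $G - (V(C)\cap V(D))$; by Menger's theorem, $|\mathcal{P}|$ equals the minimum size of a cut separating these sets in $G - (V(C)\cap V(D))$, so a cut of size $m + |\mathcal{P}|$ separates $C$ and $D$ in $G$. Second, I would show that $|\mathcal{P}|$ is small --- ideally at most a constant multiple of $m$ --- by deriving a contradiction whenever $|\mathcal{P}|$ is large. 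The contradiction takes the form of a cycle strictly longer than $c(G)$ assembled from arcs of $C$, arcs of $D$, and some subset of paths from $\mathcal{P}$. Encoding each $P \in \mathcal{P}$ by the pair of arcs containing its endpoints gives a bipartite auxiliary multigraph on the arc sets, and an alternating closed walk in this multigraph (paired with a consistent choice of $C$- and $D$-arcs) lifts to a cycle in $G$. The third step is an extremal lemma bounding the number of edges in the auxiliary multigraph that avoids such a liftable alternating walk; the cut bound then follows by Menger.

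The hard part will be exactly this extremal lemma. A Tur\'an-type count yields $|\mathcal{P}| = O(m^{8/5})$, as in Groenland et al., and the paper's supersaturation-type refinement pushes this to $O(m^{3/2})$, which gives only the $\Omega(k^{2/3})$ bound advertised in the abstract. Reaching the linear bound $O(m)$ needed for the full conjecture appears to require a substantially new idea --- plausibly exploiting the cyclic order on the arcs (which the auxiliary multigraph forgets) or an iterative scheme that lengthens $C \cup D$ one arc at a time while preserving the invariant that the result still contains two long cycles. Without such an ingredient, this framework yields only an approximation to Smith's conjecture rather than the conjecture itself, and a full proof remains out of reach.
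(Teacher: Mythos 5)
The statement you were asked to prove is labeled as a \emph{conjecture} in the paper, and the paper does not prove it. It is explicitly presented as a long-standing open problem of Smith; the paper's contribution toward it is Corollary~\ref{cor:smith}, which gives the partial bound $\Omega(k^{2/3})$ via Theorem~\ref{thm:lct upp}. You have correctly recognized this, and your proposal is essentially a faithful reconstruction of the paper's actual framework: decompose $X-M$ and $Y-M$ into segments, take a maximum collection $\mathcal{P}$ of disjoint $(X-M,Y-M)$-paths, form the auxiliary bipartite graph $F$ recording which segment pairs are joined, bound $e(F)$ extremally by showing that a dense $F$ would let you reroute $X$ and $Y$ into a longer pair of cycles, and then apply Menger. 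That is exactly what Section~3 of the paper does (with the refinement from Tur\'an-type to supersaturation-type bounds to get $e(F)=O(m^{3/2})$).

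Your closing assessment is also accurate and worth emphasizing: this framework, as it stands, cannot resolve Smith's conjecture. To force $m \ge k$ one would need the separating cut to have size \emph{exactly} $m$, or equivalently $|\mathcal{P}| = 0$ once $M$ is removed, which fails in general; even an $O(m)$ cut bound with a constant larger than $1$ only yields $m = \Omega(k)$, not $m \ge k$. Moreover, the auxiliary graph $F$ genuinely can have $\Theta(m^{3/2})$ edges in the worst case for the constraints currently known to hold (it is $Q_3^+$-free and has few $K_{2,7}$'s), so pushing the cut bound below $m^{3/2}$ requires identifying additional forbidden configurations in $F$ --- your suggestion of exploiting the cyclic arc order more fully, which the bipartite abstraction currently discards, is one of the few plausible avenues. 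In short: there is no error in your reasoning, but there is also no proof of the conjecture here, only a correct account of why the best available technique stops at $\Omega(k^{2/3})$.
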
    

\noindent 
This conjecture has attracted considerable attention (see the survey~\cite{shabbir2013intersecting}). 
It has been confirmed for \( k \le 8 \) in~\cite{grotschel1984intersections,stewart1995intersections} and for \( k \ge (n+16)/7 \) in~\cite{gutierrez2024two}. 
For the general case, Burr and Zamfirescu (see~\cite{bondy1996basic}) proved that any two longest cycles in a \( k \)-connected graph intersect in at least \( \sqrt{k}-1 \) vertices. 
This was later improved in 1998 by Chen, Faudree, and Gould~\cite{chen1998intersections} to \( \Omega(k^{3/5}) \) using a Tur\'an-type argument.
Recently, Groenland et al.~\cite{groenland2024longest}, based on Theorem~\ref{thm:GLSTY}, established the best-known bound to date and proved that the intersection size of any two longest cycles in a $k$-connected graph is \( \Omega(k^{5/8}) \).

\subsection{Main results}
In this paper, motivated by the aforementioned conjectures and problems, we present two results on the intersection size of longest cycles in vertex-transitive and highly connected graphs. 

Our first result strengthens Theorem~\ref{thm:GLSTY} in the following quantitative form.

\begin{thm}\label{thm:lct upp}
If two longest cycles in a graph $G$ share $m$ vertices, then there exists a vertex cut of size $O(m^\frac{3}{2})$ separating these two cycles.\footnote{We emphasize again that such a vertex cut forms a $1$-transversal in \(G\); see Proposition~\ref{prop:intersect}.}
\end{thm}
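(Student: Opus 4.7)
The plan is to reduce the theorem to a Zarankiewicz-type bound on an auxiliary bipartite multigraph. Let $C,D$ be two longest cycles in $G$ with $W := V(C)\cap V(D)$ and $|W|=m$; writing the $m$ arcs of $C$ between consecutive vertices of $W$ as $P_1,\dots,P_m$ and those of $D$ as $Q_1,\dots,Q_m$, any vertex cut separating $V(C)$ from $V(D)$ must contain $W$, so it suffices to bound the minimum cut in $G-W$ separating $A := V(C)\setminus W$ from $B := V(D)\setminus W$. By Menger's theorem, if no such cut of size $O(m^{3/2})$ exists, then $G-W$ contains $N \gg m^{3/2}$ internally vertex-disjoint $A$--$B$ paths $R_1,\dots,R_N$. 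Each $R_i$ links some interior vertex of an arc $P_{a_i}$ to some interior vertex of an arc $Q_{b_i}$, and records an edge in a bipartite multigraph $H$ with vertex classes $\{P_i\}_{i=1}^m$ and $\{Q_j\}_{j=1}^m$ satisfying $|E(H)| = N$.

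The heart of the argument is a cycle-swap lemma asserting that sufficiently "generic" $K_{2,2}$ configurations in $H$ are forbidden. Given two $C$-arcs $P,P'$, two $D$-arcs $Q,Q'$, and four internally disjoint connector paths realizing the four edges of a $K_{2,2}$ in $H$, one can splice pieces of $P,P',Q,Q'$ together with the connectors to build a new cycle $C^\ast$ in $G$. A careful length accounting --- comparing the replaced subarcs against the inserted pieces and exploiting that $|C|=|D|=c(G)$ --- shows that, outside a short list of "degenerate" configurations governed by the cyclic interleaving of the eight arc-endpoints around $W$ and by which side of each connector lies on which arc, the splice satisfies $|C^\ast|>|C|$, contradicting maximality of $C$.

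If every $K_{2,2}$ in $H$ were forbidden, the K\H{o}v\'ari--S\'os--Tur\'an bound would already give $N=O(m^{3/2})$. Because only the non-degenerate $K_{2,2}$'s are forbidden, a direct Tur\'an-type argument (as in~\cite{groenland2024longest}) yields only a weaker exponent. Here I would invoke supersaturation: assuming $N \ge K m^{3/2}$ for a sufficiently large constant $K$, the standard supersaturation strengthening of the Zarankiewicz estimate guarantees that the simple underlying bipartite graph of $H$ contains $\Omega(K^{4} m^{2})$ copies of $K_{2,2}$. I would then classify degenerate $K_{2,2}$'s by a combinatorial "type" (the cyclic order of their eight arc-endpoints on $W$ together with the splice pattern) and show that only $O(m^{2})$ such types arise. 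A pigeonhole/averaging step over the abundant $K_{2,2}$'s produces a non-degenerate copy for $K$ sufficiently large, contradicting the cycle-swap lemma and forcing $N=O(m^{3/2})$.

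The main obstacle is expected to be the cycle-swap lemma itself: pinning down the precise combinatorial condition on a $K_{2,2}$ in $H$ that forces a longer cycle, and verifying that the residual degenerate configurations fall into only $O(m^{2})$ types so the supersaturation surplus overwhelms them. Delicate cases include $K_{2,2}$'s in which two of the four chosen arcs share an endpoint in $W$, configurations where a connector attaches to an arc near rather than strictly inside an endpoint, and the possibility that $C^\ast$ accidentally meets arcs outside the chosen four. Once the combinatorial core is in place, the supersaturation step becomes a routine convexity computation and delivers the stated $O(m^{3/2})$ bound.
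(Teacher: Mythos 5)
Your setup mirrors the paper exactly: build a bipartite graph whose two sides index the arcs of $C-W$ and $D-W$, with one edge per connector path, then argue by Menger's theorem that the edge count is $O(m^{3/2})$. The supersaturation instinct is also right --- the paper's proof is indeed supersaturation-based and not a pure forbidden-subgraph bound. But the combinatorial core you describe has a genuine gap.

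You assert that "sufficiently generic" $K_{2,2}$ configurations in the auxiliary graph are forbidden, and that the degenerate ones --- classified by interleaving pattern and splice orientation --- fall into only $O(m^2)$ cases, so that an $\Omega(K^4 m^2)$ supersaturation count overwhelms them. This is where the argument fails. When one carefully analyses the splice, a single $K_{2,2}$ admits only four interleaving "types" (depending on whether the two endpoint pairs on the $X$-side and the two on the $Y$-side are each ordered consistently or not), and only \emph{one} of those four types yields a length gain and hence a forbidden configuration; the other three types are genuinely consistent with maximality of $C$ and $D$. They are not rare degenerate exceptions --- a priori, \emph{all} of the $\Theta(m^2)$ copies of $K_{2,2}$ at the Zarankiewicz threshold could be of an allowed type, so pigeonholing over $\Omega(K^4 m^2)$ copies never forces a forbidden one. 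Your plan would need the degenerate $C_4$ count to be $o(K^4 m^2)$, but the degenerate set has no such upper bound.

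What the paper does instead, after forbidding the one bad $C_4$-type, is to work with \emph{pairs} of $C_4$'s. It considers pairs $(i,j)$ with co-degree $a_{ij} \ge 7$ (i.e.\ copies of $K_{2,7}$, not $K_{2,2}$): inside each such $K_{2,7}$ a pigeonhole on endpoint orientations produces a $K_{2,4}$ all of whose $C_4$'s are of the single surviving type, and a second structural lemma shows two such $K_{2,4}$'s anchored on crossing index pairs $(i_1,j_1)$, $(i_2,j_2)$ would force a longer cycle. Hence the set of high-co-degree pairs is pairwise non-crossing, and a short interval/laminar-family argument bounds any pairwise non-crossing family by $2m-3$. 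Plugging this $O(m)$ bound --- not $O(m^2)$ --- into a convexity estimate $\sum_{i<j} a_{ij} \ge \mathcal{E}(\mathcal{E}-m)/(2m)$ yields $\mathcal{E}=O(m^{3/2})$. The crossing/non-crossing machinery and the two-$C_4$ lemma are the missing ideas in your proposal, and they are essential: the supersaturation step alone, applied to $K_{2,2}$'s, cannot close the argument because the allowed $C_4$-types are abundant.
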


\noindent Our proof is purely combinatorial and relies on supersaturation-type estimates, extending beyond the existing Tur\'an-type approaches (e.g., \cite{chen1998intersections,groenland2024longest}).
Using this result, Menger's Theorem immediately yields the following bound for Conjecture~\ref{conj:Smith}, improving upon the current record, namely the \( \Omega(k^{5/8}) \) bound of Groenland et al.~\cite{groenland2024longest}.

\begin{cor}\label{cor:smith}
Let $k\geq 2$ be any integer. Every two longest cycles in a $k$-connected graph intersect in $\Omega(k^{2/3})$ vertices.  
\end{cor}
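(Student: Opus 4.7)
The plan is to derive Corollary~\ref{cor:smith} as an essentially immediate consequence of Theorem~\ref{thm:lct upp} combined with Menger's theorem. Let $G$ be a $k$-connected graph with two longest cycles $C$ and $D$, and set $m = |V(C) \cap V(D)|$. The task is to show $m = \Omega(k^{2/3})$; note that the case $C = D$ gives $m = c(G) \geq k+1$ trivially (any $k$-connected graph has $\delta(G) \geq k$ and hence a cycle of length at least $k+1$), so we may assume $C \neq D$.

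By Theorem~\ref{thm:lct upp} there exist an absolute constant $c_0 > 0$ and a vertex cut $S$ of $G$ with $|S| \leq c_0\, m^{3/2}$ that separates the two cycles; in particular $V(C)\cap V(D)\subseteq S$. Assume first that neither $V(C)$ nor $V(D)$ is entirely contained in $S$. Then one can select $u\in V(C)\setminus S$ and $v\in V(D)\setminus S$ lying in distinct components of $G-S$.

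Now invoking Menger's theorem, the $k$-connectivity of $G$ yields $k$ internally vertex-disjoint $u$--$v$ paths in $G$. Each of these paths must traverse some vertex of $S$, and since the paths are internally disjoint, the chosen intermediate vertices are pairwise distinct. Hence $|S|\geq k$, and combining with $|S|\leq c_0\, m^{3/2}$ gives
\[
k \leq c_0\, m^{3/2}, \qquad \text{i.e.,} \qquad m \geq (k/c_0)^{2/3} = \Omega(k^{2/3}).
\]

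The only remaining case is that $V(C)\subseteq S$ or $V(D)\subseteq S$, which forces $|S|\geq c(G)\geq k+1$ by the Dirac-type observation recalled above; this immediately yields $c_0\, m^{3/2}\geq k+1$ and hence the same conclusion. No genuine obstacle arises: the entire difficulty is absorbed into Theorem~\ref{thm:lct upp}, while the deduction above is a routine Menger-plus-arithmetic argument, with the only minor care being the splitting into the two cases according to whether each longest cycle has a vertex outside the separator.
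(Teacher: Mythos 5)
Your proposal is correct and matches the paper's intended (one-line) derivation: combine Theorem~\ref{thm:lct upp} with the fact that in a $k$-connected graph any $(X,Y)$-separator whose complement meets both cycles has size at least $k$. The case analysis (one cycle entirely inside the separator) and the implicit point that $u,v$ are non-adjacent when $S$ is a genuine separator are minor details that the paper absorbs into the phrase ``Menger's Theorem immediately yields,'' so there is no substantive difference in approach.
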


\noindent 
Another implication of Theorem~\ref{thm:lct upp} concerns the length \( c(G) \) of a longest cycle in a connected vertex-transitive graph \( G \) on \( n \) vertices.
To see this, let \(m\) denote the minimum size of the intersection of two longest cycles in \(G\). 
Then, evidently the vertex set of every longest cycle forms an \(m\)-transversal, and by Theorem~\ref{thm:lct upp}, there exists a \(1\)-transversal of size \(O(m^{3/2})\). 
Applying equation~\eqref{equ:DeVos} twice, we obtain
\[
c(G) \ge \min_{m \in \mathbb{N}} \max\left\{ \sqrt{mn}, \frac{n}{O(m^{3/2})} \right\} = \Omega(n^{5/8}).
\]
This improves the \( \Omega(n^{13/21}) \) bound of Groenland et al.~\cite{groenland2024longest}, but falls short of the very recent \( \Omega(n^{9/14}) \) bound of Norin et al.~\cite{norin2025small}. 
On the other hand, since every connected \(d\)-regular vertex-transitive graph is \(\Omega(d)\)-connected (see~\cite{godsil2013algebraic}), Theorem~\ref{thm:lct upp} outperforms the \( \Omega(n^{9/14}) \) bound for connected \(d\)-regular vertex-transitive graphs when \(d = \Omega(n^{3/7})\), as follows.

\begin{cor}\label{cor:vert-tran-c(G)}
Let $n>d\geq 2$. Every connected $d$-regular vertex-transitive graph on $n$ vertices contains a cycle of length $\Omega(d^\frac{1}{3}n^\frac{1}{2})$.
\end{cor}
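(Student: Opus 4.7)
The plan is to combine three ingredients already available in the excerpt: the classical fact that a connected $d$-regular vertex-transitive graph is $\Omega(d)$-connected \cite{godsil2013algebraic}, Corollary~\ref{cor:smith}, and DeVos's transversal inequality~\eqref{equ:DeVos}. The strategy is to first use Corollary~\ref{cor:smith} to lower-bound the pairwise intersection of any two longest cycles, and then feed this intersection size into~\eqref{equ:DeVos} using the vertex set of a single longest cycle as the transversal.

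Let $G$ be a connected $d$-regular vertex-transitive graph on $n$ vertices. We may assume $G$ is not Hamiltonian, since otherwise $c(G)=n\ge d^{1/3}n^{1/2}$ holds trivially (using $d\le n$). By \cite{godsil2013algebraic}, $G$ is $\Omega(d)$-connected, so Corollary~\ref{cor:smith} produces a constant $c_0>0$ such that any two distinct longest cycles of $G$ share at least $m:=c_0 d^{2/3}$ vertices. Because $G$ is non-Hamiltonian and vertex-transitive, there are at least two distinct longest cycles in $G$ (any longest cycle $C$ can be mapped by an automorphism to cover a vertex outside $C$), so in particular $c(G)\ge m$.

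Now fix any longest cycle $C$. Then $V(C)$ is an $m$-transversal of $G$: every longest cycle $D$ satisfies $|V(C)\cap V(D)|\ge m$, including $D=C$ which uses $c(G)\ge m$ established above. Applying~\eqref{equ:DeVos} with $A=V(C)$ of size $c(G)$ and $t=m$ gives
$$c(G)\ge \frac{mn}{c(G)},$$
and rearranging yields $c(G)\ge \sqrt{mn}=\Omega(d^{1/3}n^{1/2})$, as desired.

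Since the argument is a direct composition of results already available, I do not anticipate any serious obstacle. The only minor point requiring care is verifying that $V(C)$ qualifies as an $m$-transversal; this reduces to the inequality $c(G)\ge m$, which is handled by the Hamiltonicity dichotomy above combined with the standard vertex-transitive argument that guarantees at least two distinct longest cycles in the non-Hamiltonian case.
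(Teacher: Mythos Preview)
Your proof is correct and follows essentially the same approach as the paper: use $\Omega(d)$-connectivity together with Corollary~\ref{cor:smith} to see that the vertex set of any longest cycle is an $\Omega(d^{2/3})$-transversal, and then apply~\eqref{equ:DeVos} with $A=V(C)$. The only difference is that you explicitly justify $c(G)\ge m$ via the Hamiltonicity dichotomy, a minor detail the paper leaves implicit.
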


Our second result is motivated by Babai's work~\cite{babai1979long}. 
We prove the following, showing that any two longest cycles in a connected vertex-transitive graph intersect in sufficiently many vertices.

\begin{thm}\label{thm:transitive babai}
Let $n>d\geq 2$.
Every two longest cycles in a connected $d$-regular vertex-transitive graph on $n$ vertices intersect in $\Omega\big((\log_d n)^{1/3}\big)$ vertices.
\end{thm}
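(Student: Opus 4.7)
Let $G$ be a connected $d$-regular vertex-transitive graph on $n$ vertices, write $c=c(G)$, and let $m$ denote the minimum of $|V(C)\cap V(D)|$ over pairs of longest cycles $C,D$ in $G$. My plan is to assume $m$ is substantially smaller than $(\log_d n)^{1/3}$ and use vertex-transitivity to construct a cycle of length greater than $c$, contradicting the definition of $c$. As a preparatory step, I would fix longest cycles $C,D$ attaining $|V(C)\cap V(D)|=m$ and invoke Theorem~\ref{thm:lct upp} to obtain a vertex cut $S$ of size $|S|=O(m^{3/2})$ separating $C$ and $D$. Since $S$ is also a $1$-transversal by Proposition~\ref{prop:intersect}, DeVos's inequality~\eqref{equ:DeVos} forces $c=\Omega(n/m^{3/2})$, and the same inequality applied to the $m$-transversal $V(C)$ yields $c=\Omega(\sqrt{mn})$. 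Both bounds are consumed below.

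The centerpiece of the argument is an extension procedure driven by vertex-transitivity rather than by a double-counting or Tur\'an-type argument. Given the longest cycle $C$ and any automorphism $\phi\in\Aut(G)$, the translate $\phi(C)$ is another longest cycle that meets $C$ in at least $m$ vertices, and $\phi(S)$ is a cut of size $O(m^{3/2})$ whose removal separates $\phi(C)$ into $O(m^{3/2})$ arcs of average length $\Omega(n/m^3)$. I would show that if $\phi$ can be chosen so that $V(\phi(C))\cap V(C)$ is aligned with $\phi(S)$, then one can splice the two cycles: remove a short arc of $C$ passing through $\phi(S)$ and replace it by a longer arc of $\phi(C)$ joining the same pair of attachment points, yielding a cycle of length strictly greater than $c$. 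The bound $c=\Omega(\sqrt{mn})$ is what forces some $C$-arc through $\phi(S)$ to be genuinely shorter than the corresponding $\phi(C)$-arc. Vertex-transitivity enters in two essential ways: it supplies many candidate automorphisms $\phi$, and it allows a rerouting pattern discovered anywhere in $G$ to be transported onto $C$.

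The exponent $\tfrac{1}{3}$ then emerges from a ball-growth estimate on configurations. Each usable rerouting is determined by the $\phi$-image of a local configuration of size $\Theta(m^3)$ — the cut $S$ together with its $O(m^{3/2})$ attachment points on $C$ and the short connecting paths between them — and a $d$-regular graph has at most $d^{O(m^3)}$ inequivalent configurations of this size. Since vertex-transitivity provides at least $n$ distinct translates of $C$ (one per vertex), whenever $d^{O(m^3)}<n$ the pigeonhole principle yields two translates that agree on the configuration, which is enough to execute the splice and contradict $c(G)=c$. Contrapositively, $m=\Omega((\log_d n)^{1/3})$. The main obstacle is verifying that the splice is genuinely length-increasing: two longest cycles sharing $m$ vertices need not contain a longer cycle in their union, so the argument has to exploit the full structure guaranteed by Theorem~\ref{thm:lct upp} — in particular, that $C$ traverses $S$ via $O(m^{3/2})$ short paths while $\phi(C)\setminus\phi(S)$ carries nearly all of the length $c$ — to rule out the degenerate case where the splice merely reproduces a cycle of the same length. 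A secondary difficulty is the correct choice of the configuration so that its complexity is indeed captured by $d^{O(m^3)}$ rather than a larger quantity; it is here that the two preliminary bounds $c=\Omega(n/m^{3/2})$ and $c=\Omega(\sqrt{mn})$ must be used simultaneously to control the length contributions.
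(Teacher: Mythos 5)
Your proposal shares the broad strategy of the paper---exploit vertex-transitivity to produce many translates of $C$, pigeonhole on a bounded number of ``local configurations'' near the cut $S$, and splice two matching translates into a longer cycle---but it skips the ingredient that makes that counting possible, and without it the argument does not close.

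The central claim in your sketch is that the local picture near $\phi(S)$ (the cut of size $O(m^{3/2})$, the $O(m^{3/2})$ attachment points of $C$ on it, and ``the short connecting paths between them'') occupies only $\Theta(m^3)$ vertices, so that a $d$-regular graph admits at most $d^{O(m^3)}$ such configurations. But there is no a priori reason that the arcs of $C$ between consecutive intersections with $\phi(S)$ are short; generically they have length on the order of $c/|\phi(S)\cap V(C)|=\Omega(n/m^{3/2})$, not $O(m^{3/2})$. So the configuration is \emph{not} confined to a set whose size is controlled by $m$ alone, and the claimed $d^{O(m^3)}$ bound is unjustified. The paper resolves exactly this difficulty by invoking the DeVos--Mohar structure theorem (Theorem~\ref{thm:devos sep}): a small connected cut in a vertex-transitive graph either has a small side (Case~1, dispatched by Babai's $c(G)=\Omega(\sqrt{n})$) or witnesses an $(s,t)$-ring-like cyclic block system with $st\le k/2$ (Case~2). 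It is only in the ring-like case that the edges of $C$ crossing a window of $t$ consecutive blocks are confined to $O(st)=O(k)$ vertices, which is what gives the $2^{\binom{3st}{2}}\cdot(3st)!\le 10^{k^2}$ bound on equivalence classes of translated windows and makes the pigeonhole count work. Your proposal contains no analogue of this dichotomy, and the blanket configuration-counting does not substitute for it.

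Two smaller points. First, the two preliminary inequalities $c=\Omega(n/m^{3/2})$ and $c=\Omega(\sqrt{mn})$ that you say are ``consumed below'' do not actually appear in the paper's argument (the only longest-cycle lower bound used is Babai's $c(G)\ge 3\sqrt{n}$ in Case~1); in particular the claim that $c=\Omega(\sqrt{mn})$ ``forces some $C$-arc through $\phi(S)$ to be genuinely shorter'' is not a step that can be made precise as written. Second, the splice the paper actually performs is not a single short-arc-for-long-arc swap: it replaces \emph{several} disjoint subpaths of $X$ by the $g$-images of corresponding subpaths, using the equivalence of $g_u$ and $g_v$ (identical crossing edge sets \emph{and} identical cyclic order) together with Observation~\ref{obs:cycle merge} to certify that the result is a cycle, and a max-index argument on crossing edges to certify it is strictly longer. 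Your sketch gestures at this (``rule out the degenerate case where the splice merely reproduces a cycle of the same length'') but gives no mechanism for it; the paper's mechanism depends on the block indices furnished by the ring structure, which again you have not introduced.
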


\noindent Combining this with Corollary~\ref{cor:smith}, we obtain the following corollary, which provides an affirmative answer to Problem~\ref{prob:vert-tran}. 
Throughout, we use \(\ln x\) to denote the natural logarithm.

\begin{cor}\label{cor:vert-tran}
    Every two longest cycles in a connected vertex-transitive graph on $n$ vertices intersect in $\Omega\big((\ln n/\ln\ln n)^{1/3}\big)$ vertices.
\end{cor}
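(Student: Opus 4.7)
The plan is to combine Corollary~\ref{cor:smith} and Theorem~\ref{thm:transitive babai}, trading off the two bounds against the regularity $d$ of the graph. Let $G$ be a connected vertex-transitive graph on $n$ vertices; by vertex-transitivity $G$ is $d$-regular for some integer $d\ge 2$. If $d=2$ then $G\cong C_n$, the unique longest cycle is $G$ itself, and the conclusion is trivial. Henceforth assume $d\ge 3$, and recall the classical fact (already invoked in the paper, see \cite{godsil2013algebraic}) that every connected vertex-transitive $d$-regular graph is $\Omega(d)$-connected.

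I would then apply the two intersection bounds in parallel. Corollary~\ref{cor:smith}, combined with the $\Omega(d)$-connectivity bound, yields that any two longest cycles of $G$ share at least $\Omega(d^{2/3})$ vertices. Theorem~\ref{thm:transitive babai} directly gives at least $\Omega((\log_d n)^{1/3})=\Omega((\ln n/\ln d)^{1/3})$ vertices. Consequently the intersection size is at least
\[
\Omega\!\left(\max\!\left\{d^{2/3},\ \left(\tfrac{\ln n}{\ln d}\right)^{1/3}\right\}\right).
\]

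It remains to eliminate the dependence on $d$. Set the natural threshold $d_0:=\sqrt{\ln n/\ln\ln n}$, which is (up to lower-order factors) the balance point of the two bounds, and split on whether $d\ge d_0$. If $d\ge d_0$ then $d^{2/3}\ge (\ln n/\ln\ln n)^{1/3}$ immediately. Otherwise $d<d_0$, so for $n$ sufficiently large $\ln d\le \tfrac12\ln(\ln n/\ln\ln n)\le \tfrac12\ln\ln n$, whence
\[
\left(\tfrac{\ln n}{\ln d}\right)^{1/3}\ge \left(\tfrac{2\ln n}{\ln\ln n}\right)^{1/3}\ge \left(\tfrac{\ln n}{\ln\ln n}\right)^{1/3}.
\]
Either case delivers the desired $\Omega((\ln n/\ln\ln n)^{1/3})$ lower bound.

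Since both ingredients are already established, the proof is essentially optimization bookkeeping and I do not foresee a genuine obstacle. The only care required is in the small-$d$ regime: one should confirm that for constant $d\ge 3$ the $\Omega(d)$-connectivity hypothesis still makes Corollary~\ref{cor:smith} applicable (harmlessly, since the second bound alone already beats the target there), and that the $\Omega$-constants are uniform across the two ranges of $d$, which they are.
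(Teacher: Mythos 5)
Your proposal is correct and follows essentially the same route as the paper: combine Corollary~\ref{cor:smith} (via $\Omega(d)$-connectivity) with Theorem~\ref{thm:transitive babai} and optimize over the regularity $d$. The paper states the conclusion compactly as $\min_{d\ge 2}\max\{\Omega((\log_d n)^{1/3}),\Omega(d^{2/3})\}=\Omega((\ln n/\ln\ln n)^{1/3})$, achieved at $d=\Theta((\ln n/\ln\ln n)^{1/2})$; you carry out the same trade-off explicitly with a case split at $d_0=\sqrt{\ln n/\ln\ln n}$, which is merely a more detailed writing of the same bookkeeping.
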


\noindent This can be seen as evidence (albeit arguably weak) supporting the conjecture of Lov\'asz~\cite{lovasz1969combinatorial}. 

\medskip

The rest of this paper is organized as follows. 
In Section~2, we provide the necessary preliminaries, including the crucial relation that reduces the proof of Theorem~\ref{thm:lct upp} to a problem in extremal graph theory. 
In Section~3, we present the proofs of Theorem~\ref{thm:lct upp} and Corollary~\ref{cor:vert-tran-c(G)}. 
In Section~4, we prove Theorem~\ref{thm:transitive babai} and Corollary~\ref{cor:vert-tran}. 
Finally, in Section~5, we conclude with several remarks.

\section{Preliminaries}
In this section, we first introduce the notations used throughout this paper, and then define a useful auxiliary graph based on two longest cycles for the proof of Theorem~\ref{thm:lct upp}.  
We also explain how this auxiliary graph plays a key role in the proof of Theorem~\ref{thm:lct upp}.

\subsection{Notations}
We mostly follow the standard graph-theoretic notation from \cite{diestel2024graph}.  
All graphs are finite unless otherwise specified, and the term \textit{disjoint} always refers to \textit{vertex-disjoint}.
Let \(G\) be a graph.  
As mentioned earlier, we write $c(G)$ for the length of a longest cycle in $G$.
Let \(A, B \subseteq V(G)\) be two vertex subsets, not necessarily disjoint.
A path $P=x_1\cdots x_t$ in $G$ is called an \textit{$(A,B)$-path}, if $V(P)\cap A=\{x_1\}$ and $V(P)\cap B=\{x_t\}$.
If $A=\{a\}$, we abbreviate $(\{a\},B)$-paths by $(a,B)$-paths.
For a subgraph $H$ of $G$, we abbreviate $(V(H),B)$-paths by $(H,B)$-paths.
We say that a vertex subset \(S \subseteq V(G)\) \textit{separates} \(A\) and \(B\) if every \((A,B)\)-path intersects \(S\).  
In this case, we call \(S\) an \textit{\((A,B)\)-separator}.  
Note that necessarily \(A \cap B \subseteq S\).
We also write \(G - A\) for the subgraph of \(G\) induced by \(V(G) \setminus A\).

For a path \(P\) (or a cycle \(C\)), we write \(|P|\) (or \(|C|\)) for the number of its edges.
Given \(u,v \in V(P)\), let \(P[u,v]\) denote the unique subpath of \(P\) with endpoints \(u\) and \(v\).    
If \(P\) has endpoints \(a,b\), and we fix a direction from \(a\) to \(b\), we define the associated linear order \(\prec\) on \(V(P)\) as follows: for \(u,v \in V(P)\), we write \(u \prec v\) if \(a,u,v,b\) occur on \(P\) in the given direction.

For integers \(a \le b\), we denote \([a,b] = \{\, i \in \mathbb{Z} : a \le i \le b \,\}\).  
If \(a\) is positive, we write \([a]\) for the set \([1,a]\).  

\subsection{An auxiliary graph associated with two longest cycles}
In this subsection, let \(G\) be a 2-connected graph, and let \(X, Y\) be any two longest cycles in \(G\).  
Here, we discuss some basic propositions and define an auxiliary graph arising from this setting.  

We begin with a simple fact.
Recall the definition of a \(t\)-transversal given before equation \eqref{equ:DeVos}.  
We refer to a \(1\)-transversal simply as a \emph{transversal}.

\begin{prop}\label{prop:intersect}
We have $V(X)\cap V(Y)\neq \emptyset$, and any vertex-cut separating $X$ and $Y$ is a transversal of $G$.
\end{prop}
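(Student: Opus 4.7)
The plan is to establish both halves of the proposition via short Menger-type cycle rerouting arguments.

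For the first claim, I would argue by contradiction and suppose $V(X)\cap V(Y)=\emptyset$. Since $G$ is $2$-connected and $|V(X)|,|V(Y)|\ge 3$, Menger's theorem applied to the disjoint sets $V(X), V(Y)$ yields two vertex-disjoint $(V(X),V(Y))$-paths $P_1$ and $P_2$ whose interiors avoid $V(X)\cup V(Y)$; write their endpoints as $x_i\in V(X)$ and $y_i\in V(Y)$ for $i=1,2$, with $x_1\ne x_2$ and $y_1\ne y_2$. The two arcs of $X$ between $x_1$ and $x_2$ have total length $|X|$, so one has length at least $|X|/2$; likewise some arc of $Y$ between $y_1$ and $y_2$ has length at least $|Y|/2$. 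Concatenating these two long arcs through $P_1$ and $P_2$ produces a simple cycle of length at least
\[
\tfrac{|X|}{2}+\tfrac{|Y|}{2}+|P_1|+|P_2|\ \ge\ |X|+2\ >\ |X|,
\]
using $|X|=|Y|$ and $|P_1|,|P_2|\ge 1$. This contradicts the maximality of $X$.

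For the second claim, let $S$ be a vertex cut separating $X$ and $Y$, let $Z$ be an arbitrary longest cycle in $G$, and suppose for contradiction that $V(Z)\cap S=\emptyset$. Then $Z$ lies entirely in $G-S$. Because $S$ is a $(V(X),V(Y))$-separator, the vertex sets $V(X)\setminus S$ and $V(Y)\setminus S$ lie in distinct components of $G-S$; since $Z$ is connected, $V(Z)$ cannot meet both, so without loss of generality $V(Z)\cap(V(Y)\setminus S)=\emptyset$. Combined with $V(Z)\cap S=\emptyset$, this yields $V(Z)\cap V(Y)=\emptyset$. However, $Y$ and $Z$ are two longest cycles in the $2$-connected graph $G$, so applying the first claim to the pair $(Y,Z)$ forces $V(Y)\cap V(Z)\ne\emptyset$, a contradiction.

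The only point requiring genuine care is to verify that the rerouted object in the first part is in fact a simple cycle. This reduces to the bookkeeping observation that $V(X)$ and $V(Y)$ are disjoint by assumption, and the interiors of $P_1,P_2$ are disjoint from both $V(X)$ and $V(Y)$ and from each other, so the four pieces meet only at the junction vertices $x_1,x_2,y_1,y_2$. Given this, no step presents a serious obstacle; I expect the whole argument to take only a few lines.
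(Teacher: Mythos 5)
Your proof is correct and takes essentially the same approach as the paper: the first half is the same Menger-plus-rerouting argument (you pick the two longer arcs to build one cycle exceeding $|X|$, whereas the paper builds both cycles and observes their total length exceeds $|X|+|Y|$, which are the same averaging step in disguise), and the second half is the paper's observation that $Z$ must contain a $(V(X),V(Y))$-path crossing the separator, merely recast as a contradiction argument.
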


\begin{proof}
Suppose that \(V(X) \cap V(Y) = \emptyset\).  
Since \(G\) is 2-connected, there exist two disjoint \((X,Y)\)-paths.  
Together with \(X\cup Y\), these paths would form two cycles \(X'\) and \(Y'\) satisfying \(|X'| + |Y'| > |X| + |Y|\),  
which contradicts the maximality of \(X\) and \(Y\).

Now consider any vertex-cut \(A\) separating \(X\) and \(Y\).  
Clearly, \(A\) contains all vertices in \(V(X) \cap V(Y)\), which is non-empty.  
Let \(Z\) be any other longest cycle in \(G\).  
Since \(V(Z) \cap V(X) \neq \emptyset\) and \(V(Z) \cap V(Y) \neq \emptyset\),  
there exists a subpath of \(Z\) which is a \(\big(V(Z) \cap V(X), V(Z) \cap V(Y)\big)\)-path.  
This path must intersect \(A\), implying \(V(Z) \cap A \neq \emptyset\).  
Hence, \(A\) is a transversal of \(G\).
\end{proof}

Let \(M = V(X) \cap V(Y)\) and \(m = |M|\).  
Then \(X - M\) and \(Y - M\) decompose into \(m\) subpaths \(X_1, \ldots, X_m\) and \(Y_1, \ldots, Y_m\), respectively (allowing empty subpaths).  
In the remainder of the paper, we refer to these subpaths $X_i$ or $Y_i$ as {\it segments}.
The following proposition establishes a useful property for any collection of disjoint \((X - M, Y - M)\)-paths in \(G\).

\begin{prop}\label{prop:X-Y path}
    For any $1 \leq i, j \leq m$, there do not exist two disjoint $(X-M, Y-M)$-paths in $G$ whose endpoints belong to $X_i$ and $Y_j$.
\end{prop}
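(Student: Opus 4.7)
The plan is a cycle-exchange argument by contradiction. Suppose such vertex-disjoint $(X-M,Y-M)$-paths $P_1,P_2$ exist, with $P_k$ joining $x_k\in X_i$ to $y_k\in Y_j$. Vertex-disjointness of $P_1,P_2$ forces $x_1\neq x_2$ and $y_1\neq y_2$. After choosing orientations (and possibly relabelling $P_1,P_2$), we may assume $x_1\prec x_2$ along $X_i$ and $y_1\prec y_2$ along $Y_j$. Let $R_X:=X_i[x_1,x_2]$ and $R_Y:=Y_j[y_1,y_2]$ denote the short arcs of $X,Y$ inside the segments $X_i,Y_j$, and let $L_X,L_Y$ be the complementary long arcs in $X,Y$, so that $|R_X|+|L_X|=|X|$ and $|R_Y|+|L_Y|=|Y|$.

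Form the two closed walks
\[
C_1 \;:=\; R_X\cdot P_2\cdot L_Y^{-1}\cdot P_1^{-1}, \qquad C_2 \;:=\; L_X\cdot P_2\cdot R_Y^{-1}\cdot P_1^{-1},
\]
each obtained by pairing one short arc and one long arc together with both paths $P_1,P_2$. A direct count gives
\[
|C_1|+|C_2| \;=\; |X|+|Y|+2(|P_1|+|P_2|)\;>\;2c(G),
\]
since $|P_1|,|P_2|\geq 1$; so at least one of $C_1,C_2$ has length strictly greater than $c(G)$. This would contradict the maximality of longest cycles once we establish the corresponding walk is a genuine simple cycle.

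For simplicity in the generic setting: the short arc $R_X\subseteq X-M$ is vertex-disjoint from $Y$ (because $V(X)\cap V(Y)=M$ while $R_X$ avoids $M$), hence disjoint from $L_Y$; symmetrically $R_Y$ is disjoint from $L_X$. The paths $P_1,P_2$ are vertex-disjoint from one another, and each $P_k$ meets the arcs only at its prescribed endpoints by the definition of an $(X-M,Y-M)$-path. The sole remaining concern is whether $P_1$ or $P_2$ contains an internal vertex $v\in M$; such a $v$ would lie on both $L_X$ and $L_Y$, creating a repetition and spoiling the simplicity of $C_1$ and $C_2$.

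The main obstacle is handling this potential $M$-crossing. The plan is to select $(P_1,P_2)$ minimizing $|V(P_1)|+|V(P_2)|$ among all valid pairs; under this minimality, we argue that no internal vertex of $P_1$ or $P_2$ can lie in $V(X)\cup V(Y)$, because any such $v\in M$ would allow one to replace a portion of the offending path by a shortcut along $X$ or $Y$ to yield a strictly shorter valid pair, contradicting minimality. Once the paths are verified to be internally disjoint from $V(X)\cup V(Y)$, both $C_1$ and $C_2$ are bona fide simple cycles of $G$, and the length estimate above produces the contradiction; securing this final minimality step is the crux of the argument.
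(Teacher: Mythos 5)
Your cycle-exchange is essentially the paper's own proof: the paper directly builds your $C_2$ after normalizing so that $|R_X|\le|R_Y|$, which gives $|C_2|=|X|-|R_X|+|R_Y|+|P_1|+|P_2|>|X|$ in one step; you build both $C_1$ and $C_2$ and average over them. When $P_1,P_2$ are internally disjoint from $M$, both versions are complete and correct, and that is the only case the paper addresses.

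The step you flag as the crux is, however, a genuine gap, and the minimality device you propose does not close it. If $P_1$ has an interior vertex $w\in M$, the suggested shortcut along $X$ (or $Y$) from $x_1$ to $w$ is in general not admissible: unless $x_1$ and $w$ are adjacent on $X$, every $X$-arc from $x_1$ to $w$ passes through further vertices of $X-M$, so the modified path violates $V(P_1)\cap(X-M)=\{x_1\}$ and is therefore not a competitor in the family over which you are minimizing; and in the boundary case where $x_1w$ is itself an edge of $X$ and $|P_1[x_1,w]|=1$, there is nothing to shortcut, yet $w\in M$ persists as an interior vertex of $P_1$. So minimality alone never forces $P_1,P_2$ to avoid $M$. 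The reading intended in the paper --- consistent with the later notation $u_{ik}=V(X)\cap V(P_{ik})$ for $P_{ik}\in\mathcal{P}$, and with the last step of the proof of Theorem~\ref{thm:lct upp}, where $M$ itself is added to the separator --- is that $\mathcal{P}$ consists of paths in $G-M$, so the troublesome case simply does not arise; restricting to $G-M$ changes the Menger count by at most $m$, which is negligible at the $O(m^{3/2})$ scale. The cleanest repair of your write-up is thus to restrict at the outset to paths avoiding $M$, dropping the minimality detour, rather than attempting to prove the proposition in the larger generality.
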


\begin{proof}
    Assume to the contrary that such paths $L_1$ and $L_2$ exist.
    Let $V(L_t)\cap V(X_i)=\{u_t\}$ and $V(L_t)\cap V(Y_j)=\{v_t\}$ for $t\in [2]$.
    Without loss of generality, we can assume that $|X_i[u_1,u_2]|\leq |Y_j[v_1,v_2]|$.
    Replacing the subpath $X_i[u_1,u_2]$ in $X_i$ by the new $(u_1,u_2)$-path $L_1\cup Y_j[v_1,v_2]\cup L_2$, we obtain a cycle in $G$ with more than $|X|$ edges, contradicting the maximality of $X$.
\end{proof}

With these propositions in place, we now formalize the notion of the auxiliary graph associated with \(X\) and \(Y\). 
It is defined for a given collection \(\mathcal{P}\) of disjoint \((X-M,Y-M)\)-paths and will play a crucial role in the proof of Theorem~\ref{thm:lct upp}.

\begin{define}\label{def:aux F}
Let $X, Y$ be two longest cycles in a graph $G$. Let $M=V(X)\cap V(Y)$ and $m=|M|$.
Let $\{X_1,\cdots,X_m\}$ and $\{Y_1,\cdots,Y_m\}$ denote the path decompositions of $X-M$ and $Y-M$, respectively.
Consider any collection $\mathcal{P}$ of disjoint $(X-M,Y-M)$-paths in $G$.
Then the auxiliary graph $F:= F(X,Y,\mathcal{P})$ is constructed as follows:
\begin{itemize}
    \item[(1)] The graph $F$ is bipartite with two parts $A=\{x_1,\cdots,x_m\}$ and $B=\{y_1,\cdots,y_{m}\}$, where the vertex $x_i$ (resp. $y_i$)  corresponds to the segment $X_i$ (resp. $Y_i$) in $G$.
    \item[(2)] For $i,j\in [m]$, define the edge $x_iy_j\in E(F)$ if and only if $\mathcal{P}$ contains a $(X_i,Y_j)$-path.
\end{itemize}
\end{define}

We point out that by Proposition~\ref{prop:X-Y path}, the graph $F$ is simple and has exactly $2m$ vertices and $|\mathcal{P}|$ edges.
The auxiliary graph $F$ was explicitly considered in \cite{chen1998intersections,groenland2024longest}, where the main idea in both papers is to show that $F$ does not contain a fixed bipartite graph $H$ as a subgraph (that is, $F$ is {\it $H$-free}). 
Applying known estimates for the Tur\'an number of $H$ then yields an upper bound on $|\mathcal{P}| = e(F)$ as a function of $m$.
In particular, \cite{chen1998intersections} proves that $F$ is $K_{3,257}$-free, while \cite{groenland2024longest} shows that $F$ is $Q_3^+$-free, where $Q_3^+$ denotes the graph obtained from $K_{4,4}$ by deleting a matching of size three.
In the next section, we employ supersaturation arguments, extending beyond the earlier approaches that rely solely on Turán-type bounds.
Specifically, we establish Theorem~\ref{thm:lct upp} by proving that the graph $F$ contains only $O(m)$ copies of $K_{2,7}$, which implies
\(
|\mathcal{P}| = e(F) = O(m^{3/2}).
\)
Menger's Theorem then provides the required vertex cut for Theorem~\ref{thm:lct upp}.

\section{Proof of Theorem~\ref{thm:lct upp}}
In this section, we prove Theorem~\ref{thm:lct upp}.
Throughout, let $G$ be a graph and $X, Y$ be two longest cycles in $G$ with $M=V(X)\cap V(Y)$ and $m=|M|$.
Our goal is to show that there exists a vertex cut of size $O(m^{3/2})$ separating $X$ and $Y$.
This is evident if $X$ and $Y$ lie in different blocks of $G$.
Hence, we may assume that $G$ is 2-connected.

We fix a cyclic ordering for each of the cycles $X$ and $Y$.
Let $\{X_1, \dots, X_m\}$ and $\{Y_1, \dots, Y_m\}$ denote the path decompositions of $X - M$ and $Y - M$, respectively, such that $X_1, \dots, X_m$ (resp. $Y_1, \dots, Y_m$) appear sequentially according to the chosen cyclic ordering of $X$ (resp. $Y$).

Consider any collection $\mathcal{P}$ of disjoint $(X-M,Y-M)$-paths, and let $F := F(X,Y,\mathcal{P})$ be the bipartite graph defined in Definition~\ref{def:aux F}.
Our strategy is to use certain local structural conditions of $F$ to find two cycles $Q_1, Q_2$ in $G$ such that
\[
E(Q_1) \cup E(Q_2) \supseteq E(X) \cup E(Y) \quad \text{and} \quad |Q_1| + |Q_2| > |X| + |Y|,
\]
thereby contradicting the maximality of $X$ and $Y$.
We call such a pair $\{Q_1,Q_2\}$ of cycles a {\bf winning certificate}.

We now introduce a key concept -- the {\it types} of 4-cycles in $F$, determined by the relative positions in both $X$ and $Y$ of the endpoints of the four paths in $\mathcal{P}$ corresponding to its edges.
Consider a 4-cycle $x_i y_k x_j y_\ell$ in $F$, where, throughout this section, we always assume that $i < j$ and $k < \ell$.
By Definition~\ref{def:aux F}, 
the edge $x_i y_k$ in $F$ uniquely corresponds to an $(X_i,Y_k)$-path $P_{ik} \in \mathcal{P}$ in $G$;
similarly, the edges $x_i y_\ell$, $x_j y_k$, and $x_j y_\ell$ correspond to the paths $P_{i\ell}$, $P_{jk}$, and $P_{j\ell}$ in $\mathcal{P}$, respectively.
For convenience, we set
\[
\mathcal{P}' = \{P_{ik}, P_{i\ell}, P_{jk}, P_{j\ell}\}.
\]
We denote by $u_{ik}, u_{i\ell}, u_{jk}, u_{j\ell}$ the endpoints of $\mathcal{P}'$ in $X$, where $u_{ik} = V(X) \cap V(P_{ik})$, and the remaining three vertices are defined similarly.
Similarly, let $v_{ik}, v_{i\ell}, v_{jk}, v_{j\ell}$ be the corresponding endpoints of $\mathcal{P}'$ in $Y$.
See Figure~\ref{fig1} for an illustration.
Let $<_X$ and $<_Y$ denote the cyclic ordering of $X$ and $Y$, respectively. 
Each segment $X_i$ (resp. $Y_i$) inherits the ordering $<_X$ (resp. $<_Y$).

\begin{define}\label{def:C4 types}
A 4-cycle $x_i y_k x_j y_\ell$ in $F$ is said to be of {\bf type $(\alpha,\beta)$}, where $\alpha, \beta \in \{0,1\}$ are defined as follows:
    \begin{itemize}
        \item[(1)] If both $u_{ik}<_{X}u_{i\ell}$ and $u_{jk}<_{X}u_{j\ell}$ hold, or both $u_{ik}>_{X}u_{i\ell}$ and $u_{jk}>_{X}u_{j\ell}$ hold, then let $\alpha=0$; otherwise, $\alpha=1$.
        \item[(2)] If both $v_{ik}<_{Y}v_{jk}$ and $v_{i\ell}<_{Y}v_{j\ell}$ hold, or both $v_{ik}>_{Y}v_{jk}$ and $v_{i\ell}>_{Y}v_{j\ell}$ hold, then let $\beta=0$; otherwise, $\beta=1$.
    \end{itemize}
\end{define}

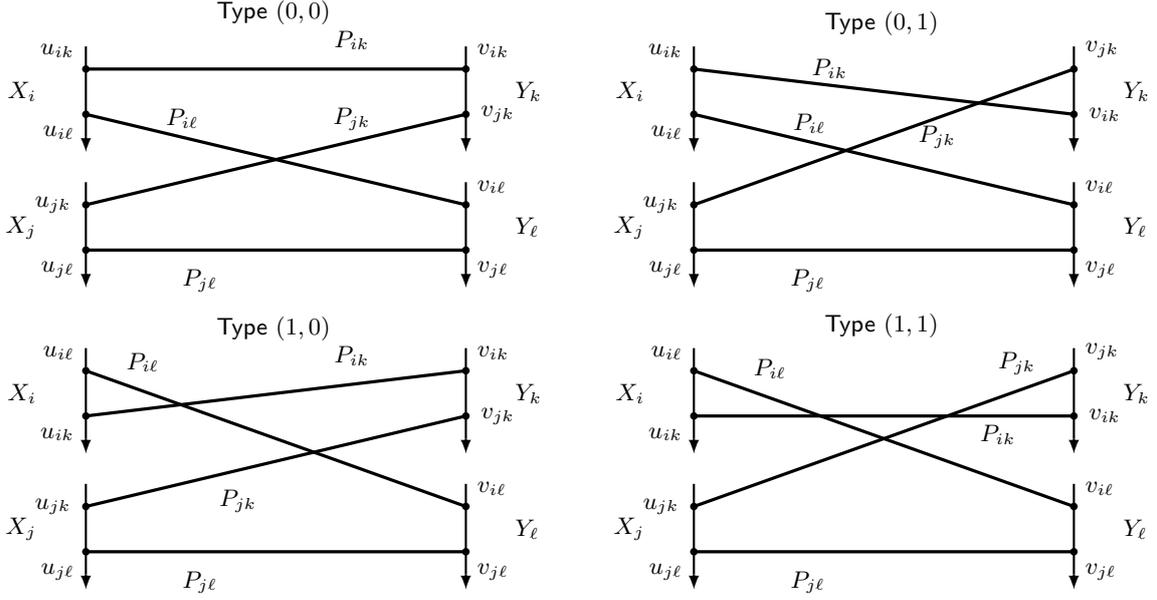
\begin{figure}[htb]
\captionsetup{justification=centering}
    \centering
\begin{tikzpicture}[
    every node/.style={font=\small\sf, inner sep=1pt},
    path/.style={thick, line width=1.2pt},
    hlpath/.style={ultra thick, line cap=round},
    xlabel/.style={text=black!80},
    ylabel/.style={text=black!80},
    >=latex,
    doublearr/.style={->, thick, line width=2pt, double, double distance=2pt}
]
\begin{scope}[local bounding box=1]

\draw[thick] (0,1.8) -- (0,3) node[pos=0.5, left=6mm] {$X_i$};
\draw[thick] (0,0) -- (0,1.2) node[pos=0.5, left=6mm] {$X_j$};

\draw[thick] (5,1.8) -- (5,3) node[pos=0.5, right=6mm] {$Y_k$};
\draw[thick] (5,0) -- (5,1.2) node[pos=0.5, right=6mm] {$Y_\ell$};
\draw[->, thick] (0,1.8) -- ++(0,-0.2);
\draw[->, thick] (0,0) -- ++(0,-0.2);
\draw[->, thick] (5,1.8) -- ++(0,-0.2);
\draw[->, thick] (5,0) -- ++(0,-0.2);

\node[circle,fill=black, label={[label distance=1mm]above left:$u_{ik}$}] (uik) at (0,2.7) {};
\node[circle,fill=black, label={[label distance=1mm]below left:$u_{i\ell}$}] (uil) at (0,2.1) {};
\node[circle,fill=black, label={[label distance=1mm]left:$u_{jk}$}] (ujk) at (0,0.9) {};
\node[circle,fill=black, label={[label distance=1mm]below left:$u_{j\ell}$}] (ujl) at (0,0.3) {};

\node[circle,fill=black, label={[label distance=1mm]above right:$v_{ik}$}] (vik) at (5,2.7) {};
\node[circle,fill=black, label={[label distance=1mm]right:$v_{jk}$}] (vjk) at (5,2.1) {};
\node[circle,fill=black, label={[label distance=1mm]above right:$v_{i\ell}$}] (vil) at (5,0.9) {};
\node[circle,fill=black, label={[label distance=1mm]below right:$v_{j\ell}$}] (vjl) at (5,0.3) {};

\draw[path] (0,2.7) -- (5,2.7) node[pos=0.7, above=2mm] {$P_{ik}$};
\draw[path] (0,2.1) -- (5,0.9) node[pos=0.2, above right] {$P_{i\ell}$};
\draw[path] (0,0.9) -- (5,2.1) node[pos=0.7, above=1mm] {$P_{jk}$};
\draw[path] (0,0.3) -- (5,0.3) node[pos=0.3, below=2mm] {$P_{j\ell}$};
\end{scope}

\begin{scope}[local bounding box=2, xshift=8cm]
\draw[thick] (0,1.8) -- (0,3) node[pos=0.5, left=6mm] {$X_i$};
\draw[thick] (0,0) -- (0,1.2) node[pos=0.5, left=6mm] {$X_j$};

\node[circle,fill=black, label={[label distance=1mm]above left:$u_{ik}$}] at (0,2.7) {};
\node[circle,fill=black, label={[label distance=1mm]below left:$u_{i\ell}$}] at (0,2.1) {};
\node[circle,fill=black, label={[label distance=1mm]left:$u_{jk}$}] at (0,0.9) {};
\node[circle,fill=black, label={[label distance=1mm]below left:$u_{j\ell}$}] at (0,0.3) {};

\draw[thick] (5,1.8) -- (5,3) node[pos=0.5, right=6mm] {$Y_k$};
\draw[thick] (5,0) -- (5,1.2) node[pos=0.5, right=6mm] {$Y_\ell$};

\node[circle,fill=black, label={[label distance=1mm]above right:$v_{jk}$}] at (5,2.7) {}; 
\node[circle,fill=black, label={[label distance=1mm]right:$v_{ik}$}] at (5,2.1) {};    
\node[circle,fill=black, label={[label distance=1mm]above right:$v_{i\ell}$}] at (5,0.9) {};
\node[circle,fill=black, label={[label distance=1mm]below right:$v_{j\ell}$}] at (5,0.3) {};

\draw[->, thick] (0,1.8) -- ++(0,-0.2);
\draw[->, thick] (0,0) -- ++(0,-0.2);
\draw[->, thick] (5,1.8) -- ++(0,-0.2);
\draw[->, thick] (5,0) -- ++(0,-0.2);

\draw[path] (0,2.7) -- (5,2.1) node[pos=0.3, above right] {$P_{ik}$}; 
\draw[path] (0,2.1) -- (5,0.9) node[pos=0.25, above right] {$P_{i\ell}$}; 
\draw[path] (0,0.9) -- (5,2.7) node[pos=0.5, right=4mm] {$P_{jk}$};  
\draw[path] (0,0.3) -- (5,0.3) node[pos=0.3, below=2mm] {$P_{j\ell}$}; 
\end{scope}

\begin{scope}[local bounding box=3, yshift=-4cm]
\draw[thick] (0,1.8) -- (0,3) node[pos=0.5, left=6mm] {$X_i$};
\draw[thick] (0,0) -- (0,1.2) node[pos=0.5, left=6mm] {$X_j$};

\node[circle,fill=black, label={[label distance=1mm]above left:$u_{i\ell}$}] at (0,2.7) {};
\node[circle,fill=black, label={[label distance=1mm]below left:$u_{ik}$}] at (0,2.1) {};
\node[circle,fill=black, label={[label distance=1mm]left:$u_{jk}$}] at (0,0.9) {};
\node[circle,fill=black, label={[label distance=1mm]below left:$u_{j\ell}$}] at (0,0.3) {};

\draw[thick] (5,1.8) -- (5,3) node[pos=0.5, right=6mm] {$Y_k$};
\draw[thick] (5,0) -- (5,1.2) node[pos=0.5, right=6mm] {$Y_\ell$};

\node[circle,fill=black, label={[label distance=1mm]above right:$v_{ik}$}] at (5,2.7) {};
\node[circle,fill=black, label={[label distance=1mm]right:$v_{jk}$}] at (5,2.1) {};
\node[circle,fill=black, label={[label distance=1mm]above right:$v_{i\ell}$}] at (5,0.9) {};
\node[circle,fill=black, label={[label distance=1mm]below right:$v_{j\ell}$}] at (5,0.3) {};
\draw[->, thick] (0,1.8) -- ++(0,-0.2);
\draw[->, thick] (0,0) -- ++(0,-0.2);
\draw[->, thick] (5,1.8) -- ++(0,-0.2);
\draw[->, thick] (5,0) -- ++(0,-0.2);

\draw[path] (0,2.1) -- (5,2.7) node[pos=0.7, above=2mm] {$P_{ik}$};
\draw[path] (0,2.7) -- (5,0.9) node[pos=0.15, above=2mm] {$P_{i\ell}$};
\draw[path] (0,0.9) -- (5,2.1) node[pos=0.4, below=2mm] {$P_{jk}$};
\draw[path] (0,0.3) -- (5,0.3) node[pos=0.3, below=2mm] {$P_{j\ell}$};
\end{scope}

\begin{scope}[local bounding box=4, xshift=8cm, yshift=-4cm]
\draw[thick] (0,1.8) -- (0,3) node[pos=0.5, left=6mm] {$X_i$};
\draw[thick] (0,0) -- (0,1.2) node[pos=0.5, left=6mm] {$X_j$};
\draw[->, thick] (0,1.8) -- ++(0,-0.2);
\draw[->, thick] (0,0) -- ++(0,-0.2);
\draw[->, thick] (5,1.8) -- ++(0,-0.2);
\draw[->, thick] (5,0) -- ++(0,-0.2);
\node[circle,fill=black, label={[label distance=1mm]above left:$u_{i\ell}$}] at (0,2.7) {};
\node[circle,fill=black, label={[label distance=1mm]below left:$u_{ik}$}] at (0,2.1) {};
\node[circle,fill=black, label={[label distance=1mm]left:$u_{jk}$}] at (0,0.9) {};
\node[circle,fill=black, label={[label distance=1mm]below left:$u_{j\ell}$}] at (0,0.3) {};

\draw[thick] (5,1.8) -- (5,3) node[pos=0.5, right=6mm] {$Y_k$};
\draw[thick] (5,0) -- (5,1.2) node[pos=0.5, right=6mm] {$Y_\ell$};

\node[circle,fill=black, label={[label distance=1mm]above right:$v_{jk}$}] at (5,2.7) {};
\node[circle,fill=black, label={[label distance=1mm]right:$v_{ik}$}] at (5,2.1) {};      
\node[circle,fill=black, label={[label distance=1mm]above right:$v_{i\ell}$}] at (5,0.9) {};
\node[circle,fill=black, label={[label distance=1mm]below right:$v_{j\ell}$}] at (5,0.3) {};

\draw[path] (0,2.1) -- (5,2.1) node[pos=0.85, above=5mm] {$P_{jk}$};
\draw[path] (0,2.7) -- (5,0.9) node[pos=0.2, above=2mm] {$P_{i\ell}$};
\draw[path] (0,0.9) -- (5,2.7) node[pos=0.8, below=3mm] {$P_{ik}$};  
\draw[path] (0,0.3) -- (5,0.3) node[pos=0.3, below=2mm] {$P_{j\ell}$}; 
\end{scope}

\node[above] at (1.north) {Type $(0,0)$};
\node[above] at (2.north) {Type $(0,1)$};
\node[above] at (3.north) {Type $(1,0)$};
\node[above] at (4.north) {Type $(1,1)$};
\end{tikzpicture}
\caption{Corresponding paths in $G$ for all types of 4-cycles in $F$.} \label{fig1}
\end{figure}

Figure~\ref{fig1} illustrates the four possible types of 4-cycles in $F$.  
Intuitively, \(\alpha = 0\) if and only if the order of \(u_{ik}, u_{i\ell}\) on \(X_i\) coincides with the order of \(u_{jk}, u_{j\ell}\) on \(X_j\) with respect to \(<_X\); similarly for \(\beta\).  
We note that the type of a 4-cycle is invariant under the choice of the cyclic ordering \(<_X\) on $X$.  
Indeed, reversing the ordering \(<_X\) on $X\) simultaneously reverses the relative orderings of both \(\{u_{ik}, u_{i\ell}\}\) and \(\{u_{jk}, u_{j\ell}\}\).  
Therefore, reversing \(<_X\) (and likewise \(<_Y\)) does not affect the type of the 4-cycle.
Below we examine the existence of 4-cycles of each type.  

Our first lemma shows that, in our setting, $F$ does not contain any 4-cycle of type $(0,0)$.  
An equivalent statement was proved in \cite{chen1998intersections},  
where it played a central role in showing that $F$ is $K_{3,257}$-free with the aid of the Erd\H{o}s--Szekeres theorem.

\begin{lemma}\label{lem:type 00}
    The graph $F$ contains no 4-cycle of type $(0,0)$.
\end{lemma}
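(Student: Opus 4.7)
The plan is to derive a contradiction with the maximality of $X$ and $Y$ via a rerouting construction. Suppose $F$ contains a $4$-cycle $x_i y_k x_j y_\ell$ of type $(0,0)$, with corresponding paths $P_{ik}, P_{i\ell}, P_{jk}, P_{j\ell}$ in $\mathcal{P}$. By reversing the cyclic orderings on $X$ and $Y$ if necessary (which preserves the type), I may assume $u_{ik}<_X u_{i\ell}$, $u_{jk}<_X u_{j\ell}$, $v_{ik}<_Y v_{jk}$, and $v_{i\ell}<_Y v_{j\ell}$, so that the four cut points appear on $X$ in cyclic order $u_{ik}, u_{i\ell}, u_{jk}, u_{j\ell}$ and on $Y$ in cyclic order $v_{ik}, v_{jk}, v_{i\ell}, v_{j\ell}$. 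Split $X$ at these four points into arcs $A_1 = X[u_{ik},u_{i\ell}]\subseteq X_i$, $A_2$, $A_3 = X[u_{jk},u_{j\ell}]\subseteq X_j$, and $A_4$, and split $Y$ similarly into $B_1\subseteq Y_k$, $B_2$, $B_3\subseteq Y_\ell$, $B_4$.

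The key surgery exchanges a ``short'' arc on $X$ with a detour through $Y$ via the paths $P_{i\ell}, P_{j\ell}$, and vice versa. Let $X'$ be the cycle obtained from $X$ by replacing the subarc $A_2\cup A_3$ with the detour $P_{i\ell}\cup B_3\cup P_{j\ell}$, and let $Y'$ be obtained from $Y$ by replacing $B_3$ with the detour $P_{i\ell}^{-1}\cup A_2\cup A_3\cup P_{j\ell}$. A direct calculation yields
\[
|X'|+|Y'| \;=\; |X|+|Y|+2\bigl(|P_{i\ell}|+|P_{j\ell}|\bigr) \;>\; |X|+|Y|,
\]
so, since $|X|=|Y|=c(G)$, at least one of $X', Y'$ would be strictly longer than $c(G)$, contradicting the choice of $X, Y$ as longest cycles.

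The main obstacle is verifying that $X'$ and $Y'$ are genuine simple cycles in $G$, since a closed walk exceeding $c(G)$ in length does not in general contain a simple cycle of the same length. The short arcs $A_1, A_3, B_1, B_3$ each lie entirely in $X-M$ or $Y-M$ and hence avoid $M$; in particular $V(X-M)\cap V(Y-M)=\emptyset$ guarantees that they cannot collide with arcs of the opposite cycle. The delicate issues are: (i) interior $M$-vertices of $P_{i\ell}$ or $P_{j\ell}$ might coincide with vertices on the long arc $A_4$ (for $X'$), and (ii) $M$-vertices lying on the $X$-subarc $A_2\cup A_3$ (used inside the $Y'$-detour) might also lie on $Y$'s long arc $B_1\cup B_2\cup B_4$. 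I plan to resolve (i) by reducing to the case where the paths in $\mathcal{P}$ have interiors disjoint from $V(X)\cup V(Y)$, via a local shortening argument that does not decrease $|\mathcal{P}|$. For (ii), the orientation concordance inherent in type $(0,0)$ provides the flexibility needed: by selecting an alternative detour (for instance, the backward arc $A_1\cup A_4$ in place of $A_2\cup A_3$, or re-routing through the other path pair $P_{ik}, P_{jk}$), at least one valid choice avoids the shared $M$-vertices. It is precisely this matching of $<_X$- and $<_Y$-orders at the cut points -- absent in the other three types -- that makes such a collision-free surgery always available.
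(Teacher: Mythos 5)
Your proposed $Y'$ is generically not a simple cycle, and this is a fatal gap. The detour inside $Y'$ contains the arc $A_2\cup A_3 = X[u_{i\ell},u_{j\ell}]$, which passes through the $M$-vertices $w_i, w_{i+1},\dots,w_{j-1}$ that separate the segments $X_i,X_{i+1},\dots,X_j$ on $X$. But $Y'$ also contains $B_1\cup B_2\cup B_4 = Y\setminus B_3$, and since $B_3$ lies entirely inside the single segment $Y_\ell$, this part of $Y$ contains \emph{every} $M$-vertex, in particular $w_i,\dots,w_{j-1}$. Because $i<j$ this overlap is always nonempty, so $Y'$ revisits each such $w_t$ and is only a closed walk, not a cycle; the conclusion $\max(|X'|,|Y'|)>c(G)$ therefore yields no contradiction. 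You correctly flag this as issue~(ii), but the remedies you propose do not fix it: every variant of a two-path arc swap leaves one of the new closed walks containing both a long $X$-arc (carrying part of $M$) and the long $Y$-arcs (carrying all of $M$). Replacing $A_2\cup A_3$ by $A_1\cup A_4$ simply shifts the colliding $M$-vertices to $\{w_j,\dots,w_m,w_1,\dots,w_{i-1}\}$, which is again nonempty, and rerouting via $P_{ik},P_{jk}$ merely exchanges the roles of $\ell$ and $k$ with the same outcome.

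The paper's construction is structurally different and is designed exactly to avoid this collision: it uses \emph{all four} paths $P_{ik},P_{i\ell},P_{jk},P_{j\ell}$ in each of two cycles $Q_1,Q_2$. In $Q_1$, the only pieces of $X$ used are $X_i[u_{ik},u_{i\ell}]$ and $X_j[u_{jk},u_{j\ell}]$, both lying inside single segments and hence $M$-free, while the pieces of $Y$ used are the two arcs $\overrightarrow{Y}[v_{jk},v_{i\ell}]$ and $\overrightarrow{Y}[v_{j\ell},v_{ik}]$; $Q_2$ is the mirror image with $X$ and $Y$ swapped. Thus each $Q_i$ picks up $M$-vertices from arcs of only one of $X,Y$, so no vertex is repeated. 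The type-$(0,0)$ hypothesis is used precisely to ensure that the two long $Y$-arcs inside $Q_1$ (and the two long $X$-arcs inside $Q_2$) are disjoint arcs; a two-path swap simply cannot realize this balanced split, which is why your approach gets stuck. (Your issue~(i) -- interior $M$-vertices of the $\mathcal{P}$-paths -- is a genuine but minor side point that both proofs must handle by taking $\mathcal{P}$ to consist of $(X,Y)$-paths in the sense of the paper's notation, so that their interiors avoid $V(X)\cup V(Y)$; this is compatible with the Menger reduction.)
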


\begin{proof}
    Suppose for a contradiction that $x_i y_k x_j y_\ell$ is a 4-cycle of type $(0,0)$.
    Keeping the previous notation for paths and endpoints, we can assume without loss of generality that
    \begin{align*}
        u_{ik} <_X u_{i\ell},\quad u_{jk}<_X u_{j\ell}, \quad v_{ik} <_Y v_{jk},\quad \mbox{and} \quad v_{i\ell} &<_Y v_{j\ell}.
    \end{align*}
    
For any distinct vertices $u,v\in V(X)$, let $\overrightarrow{X}[u,v]$ denote the path from $u$ to $v$ along the fixed cyclic ordering \(<_X\) of $X$.
The notation $\overrightarrow{Y}[u, v]$ is defined analogously for $Y$.
Construct two cycles $Q_1$ and $Q_2$ in $G$ as follows (Figure~\ref{fig:type00} depicts $Q_1$ in red and $Q_2$ in blue):
\begin{align*}
    Q_1 &= P_{ik}[u_{ik},v_{ik}] \cup \overrightarrow{Y}[v_{j\ell},v_{ik}] \cup P_{j\ell}[v_{j\ell},u_{j\ell}] \cup X_j[u_{j\ell},u_{jk}] \\
        &\quad \cup P_{jk}[u_{jk},v_{jk}] \cup \overrightarrow{Y}[v_{jk},v_{i\ell}] \cup P_{i\ell}[v_{i\ell},u_{i\ell}] \cup X_i[u_{i\ell},u_{ik}], \\
    Q_2 &= P_{ik}[u_{ik},v_{ik}] \cup Y_k[v_{ik},v_{jk}] \cup P_{jk}[v_{jk},u_{jk}] \cup \overrightarrow{X}[u_{i\ell},u_{jk}] \\
        &\quad \cup P_{i\ell}[u_{i\ell},v_{i\ell}] \cup Y_\ell[v_{i\ell},v_{j\ell}] \cup P_{j\ell}[v_{j\ell},u_{j\ell}] \cup \overrightarrow{X}[u_{j\ell},u_{ik}].
\end{align*}
It is straightforward to verify that \((Q_1,Q_2)\) forms a winning certificate, since 

\[
E(Q_1)\cup E(Q_2)\supseteq E(X)\cup E(Y) \quad \mbox{and} \quad
|Q_1| + |Q_2| = |X| + |Y| + 2\sum_{P\in \mathcal{P}'} |P| > |X| + |Y|,
\]
contradicting the maximality of \(X\) and \(Y\).
\end{proof}

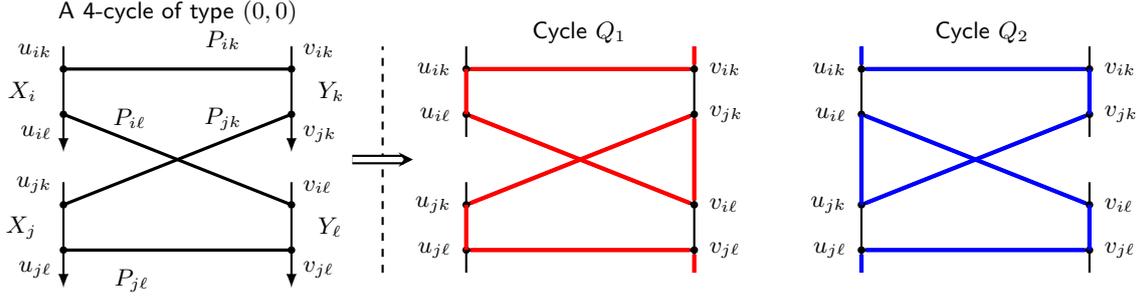
\begin{figure}[htb]
\captionsetup{justification=centering}
    \centering
\begin{tikzpicture}[
    every node/.style={font=\small\sf, inner sep=1pt},
    path/.style={thick, line width=1.2pt},
    hlpath/.style={ultra thick, line cap=round},
    xlabel/.style={text=black!80},
    ylabel/.style={text=black!80},
    >=latex,
    doublearr/.style={->, thick, line width=2pt, double, double distance=2pt}
]
\begin{scope}[local bounding box=1]
\draw[thick] (0,1.8) -- (0,3) node[pos=0.5, left=3mm] {$X_i$};
\draw[thick] (0,0) -- (0,1.2) node[pos=0.5, left=3mm] {$X_j$};

\draw[thick] (3,1.8) -- (3,3) node[pos=0.5, right=3mm] {$Y_k$};
\draw[thick] (3,0) -- (3,1.2) node[pos=0.5, right=3mm] {$Y_\ell$};

\draw[->, thick] (0,1.8) -- ++(0,-0.2);
\draw[->, thick] (0,0) -- ++(0,-0.2);
\draw[->, thick] (3,1.8) -- ++(0,-0.2);
\draw[->, thick] (3,0) -- ++(0,-0.2);

\node[circle,fill=black, label={[label distance=1mm]above left:$u_{ik}$}] (uik) at (0,2.7) {};
\node[circle,fill=black, label={[label distance=1mm]below left:$u_{i\ell}$}] (uil) at (0,2.1) {};
\node[circle,fill=black, label={[label distance=1mm]above left:$u_{jk}$}] (ujk) at (0,0.9) {};
\node[circle,fill=black, label={[label distance=1mm]below left:$u_{j\ell}$}] (ujl) at (0,0.3) {};

\node[circle,fill=black, label={[label distance=1mm]above right:$v_{ik}$}] (vik) at (3,2.7) {};
\node[circle,fill=black, label={[label distance=1mm]below right:$v_{jk}$}] (vjk) at (3,2.1) {};
\node[circle,fill=black, label={[label distance=1mm]above right:$v_{i\ell}$}] (vil) at (3,0.9) {};
\node[circle,fill=black, label={[label distance=1mm]below right:$v_{j\ell}$}] (vjl) at (3,0.3) {};

\draw[path] (0,2.7) -- (3,2.7) node[pos=0.7, above=2mm] {$P_{ik}$};
\draw[path] (0,2.1) -- (3,0.9) node[pos=0.2, above right] {$P_{i\ell}$};
\draw[path] (0,0.9) -- (3,2.1) node[pos=0.7, above=1mm] {$P_{jk}$};
\draw[path] (0,0.3) -- (3,0.3) node[pos=0.3, below=2mm] {$P_{j\ell}$};

\draw[dashed, thick] (4.2,0) -- (4.2,3); 
\draw[double, double distance=2pt, -stealth, line width=0.8pt] (3.8,1.5) -- (4.6,1.5);
\end{scope}

\begin{scope}[local bounding box=2, xshift=5.3cm]
\draw[thick] (0,1.8) -- (0,3) node[pos=0.5, left=3mm] {};
\draw[thick] (0,0) -- (0,1.2) node[pos=0.5, left=3mm] {};

\draw[thick] (3,1.8) -- (3,3) node[pos=0.5, right=3mm] {};
\draw[thick] (3,0) -- (3,1.2) node[pos=0.5, right=3mm] {};

\node[circle,fill=black, label={[label distance=1mm]left:$u_{ik}$}] (uik) at (0,2.7) {};
\node[circle,fill=black, label={[label distance=1mm]left:$u_{i\ell}$}] (uil) at (0,2.1) {};
\node[circle,fill=black, label={[label distance=1mm]left:$u_{jk}$}] (ujk) at (0,0.9) {};
\node[circle,fill=black, label={[label distance=1mm]left:$u_{j\ell}$}] (ujl) at (0,0.3) {};

\node[circle,fill=black, label={[label distance=1mm]right:$v_{ik}$}] (vik) at (3,2.7) {};
\node[circle,fill=black, label={[label distance=1mm]right:$v_{jk}$}] (vjk) at (3,2.1) {};
\node[circle,fill=black, label={[label distance=1mm]right:$v_{i\ell}$}] (vil) at (3,0.9) {};
\node[circle,fill=black, label={[label distance=1mm]right:$v_{j\ell}$}] (vjl) at (3,0.3) {};

\draw[path] (0,2.7) -- (3,2.7) node[pos=0.7, above=2mm] {};
\draw[path] (0,2.1) -- (3,0.9) node[pos=0.2, above right] {};
\draw[path] (0,0.9) -- (3,2.1) node[pos=0.7, above=1mm] {};
\draw[path] (0,0.3) -- (3,0.3) node[pos=0.3, below=2mm] {};

\end{scope}

\begin{scope}[red, ultra thick,opacity = 1]

\draw (uik) -- (vik); 
\draw (vik) -- (8.3,3); 
\draw (vjl) -- (8.3,0);
\draw (vjl) -- (ujl); 
\draw (ujl.center) -- (ujk.center); 
\draw (ujk) -- (vjk);
\draw (vil) -- (uil); 
\draw (uil.center) -- (uik.center); 
\draw (vjk) -- (vil);
\end{scope}

\begin{scope}[local bounding box=3, xshift=10.5cm]
\draw[thick] (0,1.8) -- (0,3) node[pos=0.5, left=3mm] {};
\draw[thick] (0,0) -- (0,1.2) node[pos=0.5, left=3mm] {};

\draw[thick] (3,1.8) -- (3,3) node[pos=0.5, right=3mm] {};
\draw[thick] (3,0) -- (3,1.2) node[pos=0.5, right=3mm] {};

\node[circle,fill=black, label={[label distance=1mm]left:$u_{ik}$}] (uik) at (0,2.7) {};
\node[circle,fill=black, label={[label distance=1mm]left:$u_{i\ell}$}] (uil) at (0,2.1) {};
\node[circle,fill=black, label={[label distance=1mm]left:$u_{jk}$}] (ujk) at (0,0.9) {};
\node[circle,fill=black, label={[label distance=1mm]left:$u_{j\ell}$}] (ujl) at (0,0.3) {};

\node[circle,fill=black, label={[label distance=1mm]right:$v_{ik}$}] (vik) at (3,2.7) {};
\node[circle,fill=black, label={[label distance=1mm]right:$v_{jk}$}] (vjk) at (3,2.1) {};
\node[circle,fill=black, label={[label distance=1mm]right:$v_{i\ell}$}] (vil) at (3,0.9) {};
\node[circle,fill=black, label={[label distance=1mm]right:$v_{j\ell}$}] (vjl) at (3,0.3) {};

\draw[path] (0,2.7) -- (3,2.7) node[pos=0.7, above=2mm] {};
\draw[path] (0,2.1) -- (3,0.9) node[pos=0.2, above right] {};
\draw[path] (0,0.9) -- (3,2.1) node[pos=0.7, above=1mm] {};
\draw[path] (0,0.3) -- (3,0.3) node[pos=0.3, below=2mm] {};

\end{scope}

\begin{scope}[blue,ultra thick,opacity = 1]

\draw (uik) -- (vik); 
\draw (vik.center) -- (vjk.center);
\draw (vjk) -- (ujk);

\draw (uil) -- (vil); 
\draw (vil.center) -- (vjl.center); 
\draw (vjl) -- (ujl); 
\draw (ujl) -- (10.5,0); 
\draw (uik) -- (10.5,3);
\draw (uil.center) -- (ujk.center);
\end{scope}

\node[above, xshift = -4mm] at (1.north) {A 4-cycle of type $(0,0)$};
\node[above] at (2.north) {Cycle $Q_1$};
\node[above,xshift = 1mm] at (3.north) {Cycle $Q_2$};

\end{tikzpicture}
\caption{A winning certificate $(Q_1,Q_2)$ corresponding to a 4-cycle of type $(0,0)$.} \label{fig:type00}
\end{figure}

Next, we consider two 4-cycles in $F$ of type $(1,0)$,  
which correspond to eight paths from $\mathcal{P}$.  
To streamline our arguments, we introduce the following notation.

\begin{define}
    For indices $1\leq i_1< j_1\leq m$ and $1\leq i_2< j_2\leq m$, the pairs $(i_1,j_1)$ and $(i_2,j_2)$ are {\bf crossing}, if $i_1,j_1,i_2,j_2$ are pairwise distinct and the interval $[i_1,j_1]$ contains exactly one element in $\{i_2,j_2\}$; otherwise, we say that $(i_1,j_1)$ and $(i_2,j_2)$ are {\bf non-crossing}.
\end{define}

We first show that a particular arrangement of the endpoints of these eight paths on $X$ and $Y$ guarantees the existence of a winning certificate.

\begin{figure}[htbp] 
\centering
\begin{subfigure}{\textwidth}
\centering
    \begin{tikzpicture}[
    every node/.style={font=\small\sf, inner sep=1pt},
    path/.style={thick, line width=1.2pt},
    hlpath/.style={ultra thick, line cap=round},
    xlabel/.style={text=black!80},
    ylabel/.style={text=black!80},
    >=latex,
    doublearr/.style={->, thick, line width=2pt, double, double distance=2pt},scale=0.85
    ]

\begin{scope}[local bounding box=1,xshift=-8mm]
\draw [fill={rgb,255:red,200;green,200;blue,200}, fill opacity=0.7, draw=none] 
    (0,10) ellipse [x radius=0.4, y radius=0.8];
\draw [fill={rgb,255:red,200;green,200;blue,200}, fill opacity=0.7, draw=none] 
    (0,4) ellipse [x radius=0.4, y radius=0.8];
\draw [fill={rgb,255:red,200;green,200;blue,200}, fill opacity=0.7, draw=none] 
    (2.7,10) ellipse [x radius=0.4, y radius=0.8];
\draw [fill={rgb,255:red,200;green,200;blue,200}, fill opacity=0.7, draw=none] 
    (2.7,7) ellipse [x radius=0.4, y radius=0.8];
\draw [fill={rgb,255:red,200;green,200;blue,200}, fill opacity=0.7, draw=none] 
    (-0.6,0.3) -- (0.6,0.3) -- (0,1.8) -- cycle;
\draw [fill={rgb,255:red,200;green,200;blue,200}, fill opacity=0.7, draw=none] 
    (-0.6,6.3) -- (0.6,6.3) -- (0,7.8) -- cycle;
\draw [fill={rgb,255:red,200;green,200;blue,200}, fill opacity=0.7, draw=none] 
    (2.3,4.8) rectangle (3.1,3.2);
\draw [fill={rgb,255:red,200;green,200;blue,200}, fill opacity=0.7, draw=none] 
    (2.3,1.8) rectangle (3.1,0.2);
\draw[thick] (0,9) -- (0,11) node[pos=0.0, left=3mm] {$X_{i_1}$};
\draw[thick] (0,6) -- (0,8) node[pos=0.0, left=3mm] {$X_{i_2}$};
\draw[thick] (0,3) -- (0,5) node[pos=0.0, left=3mm] {$X_{j_1}$};
\draw[thick] (0,0) -- (0,2) node[pos=0.0, left=3mm] {$X_{j_2}$};

\draw[thick] (3-0.3,9) -- (3-0.3,11) node[pos=0.0, right=3mm] {$Y_{k_1}$};
\draw[thick] (3-0.3,6) -- (3-0.3,8) node[pos=0.0, right=3mm] {$Y_{\ell_1}$};
\draw[thick] (3-0.3,3) -- (3-0.3,5) node[pos=0.0, right=3mm] {$Y_{k_2}$};
\draw[thick] (3-0.3,0) -- (3-0.3,2) node[pos=0.0, right=3mm] {$Y_{\ell_2}$};

\draw[->, thick] (0,9) -- ++(0,-0.2);
\draw[->, thick] (0,6) -- ++(0,-0.2);
\draw[->, thick] (3-0.3,3) -- ++(0,-0.2);
\draw[->, thick] (3-0.3,0) -- ++(0,-0.2);
\draw[->, thick] (0,3) -- ++(0,-0.2);
\draw[->, thick] (0,0) -- ++(0,-0.2);
\draw[->, thick] (3-0.3,9) -- ++(0,-0.2);
\draw[->, thick] (3-0.3,6) -- ++(0,-0.2);

\node[circle,fill=black, label={[label distance=1mm]left:$u_{i_1k_1}$}] (uik1) at (0,10.4) {};
\node[circle,fill=black, label={[label distance=1mm]left:$u_{i_1\ell_1}$}] (uil1) at (0,9.6) {};
\node[circle,fill=black, label={[label distance=1mm]left:$u_{j_1k_1}$}] (ujk1) at (0,3.6) {};
\node[circle,fill=black, label={[label distance=1mm]left:$u_{j_1\ell_1}$}] (ujl1) at (0,4.4) {};
\node[circle,fill=black, label={[label distance=1mm]left:$u_{i_2k_2}$}] (uik2) at (0,7.4) {};
\node[circle,fill=black, label={[label distance=1mm]left:$u_{i_2\ell_2}$}] (uil2) at (0,6.6) {};
\node[circle,fill=black, label={[label distance=1mm]left:$u_{j_2k_2}$}] (ujk2) at (0,0.6) {};
\node[circle,fill=black, label={[label distance=1mm]left:$u_{j_2\ell_2}$}] (ujl2) at (0,1.4) {};

\node[circle,fill=black, label={[label distance=1mm]right:$v_{i_1k_1}$}] (vik1) at (3-0.3,10.4) {};
\node[circle,fill=black, label={[label distance=1mm]right:$v_{j_1k_1}$}] (vjk1) at (3-0.3,9.6) {};
\node[circle,fill=black, label={[label distance=1mm]right:$v_{i_1\ell_1}$}] (vil1) at (3-0.3,7.4) {};
\node[circle,fill=black, label={[label distance=1mm]right:$v_{j_1\ell_1}$}] (vjl1) at (3-0.3,6.6) {};
\node[circle,fill=black, label={[label distance=1mm]right:$v_{i_2k_2}$}] (vik2) at (3-0.3,4.4) {};
\node[circle,fill=black, label={[label distance=1mm]right:$v_{j_2k_2}$}] (vjk2) at (3-0.3,3.6) {};
\node[circle,fill=black, label={[label distance=1mm]right:$v_{i_2\ell_2}$}] (vil2) at (3-0.3,1.4) {};
\node[circle,fill=black, label={[label distance=1mm]right:$v_{j_2\ell_2}$}] (vjl2) at (3-0.3,0.6) {};

\draw[path] (0,10.4) -- (3-0.3,10.4) node[pos=0.7, above=1mm] {$P_{i_1k_1}$};
\draw[path] (0,9.6) -- (3-0.3,7.4) node[pos=0.2, above=2mm] {$P_{i_1\ell_1}$};
\draw[path] (0,3.6) -- (3-0.3,9.6) node[pos=0.5, above=8mm] {$P_{j_1k_1}$};
\draw[path] (0,4.4) -- (3-0.3,6.6) node[pos=0.8, above=3mm] {$P_{j_1\ell_1}$};
\draw[path] (0,7.4) -- (3-0.3,4.4) node[pos=0.8, above=2.5mm] {$P_{i_2k_2}$};
\draw[path] (0,6.6) -- (3-0.3,1.4) node[pos=0.6, above=7mm] {$P_{i_2\ell_2}$};
\draw[path] (0,0.6) -- (3-0.3,3.6) node[pos=0.5, above=4mm] {$P_{j_2k_2}$};
\draw[path] (0,1.4) -- (3-0.3,0.6) node[pos=0.7, below=1.5mm] {$P_{j_2\ell_2}$};
\end{scope}

\draw[dashed, thick] (3.2,0) -- (3.2,11);
\draw[double, double distance=2pt, -stealth, line width=0.8pt] (2.8,5.5) -- (3.6,5.5);
\begin{scope}[local bounding box=2, xshift=6cm] 

\end{scope}

\begin{scope}[local bounding box=2, xshift=4.4cm]
\draw[thick] (0,9) -- (0,11) node[pos=0.5, left=6mm] {};
\draw[thick] (0,6) -- (0,8) node[pos=0.5, left=6mm] {};
\draw[thick] (0,3) -- (0,5) node[pos=0.5, left=6mm] {};
\draw[thick] (0,0) -- (0,2) node[pos=0.5, left=6mm] {};

\draw[thick] (3-0.3,9) -- (3-0.3,11) node[pos=0.5, right=6mm] {};
\draw[thick] (3-0.3,6) -- (3-0.3,8) node[pos=0.5, right=6mm] {};
\draw[thick] (3-0.3,3) -- (3-0.3,5) node[pos=0.5, right=6mm] {};
\draw[thick] (3-0.3,0) -- (3-0.3,2) node[pos=0.5, right=6mm] {};

\draw [fill={rgb,255:red,200;green,200;blue,200}, fill opacity=0.7, draw=none] 
    (-0.6,0.3) -- (0.6,0.3) -- (0,1.8) -- cycle;
\draw [fill={rgb,255:red,200;green,200;blue,200}, fill opacity=0.7, draw=none] 
    (-0.6,6.3) -- (0.6,6.3) -- (0,7.8) -- cycle;

\node[circle,fill=black, label={[label distance=1mm]left:$u_{i_1k_1}$}] (uik1) at (0,10.4) {};
\node[circle,fill=black, label={[label distance=1mm]left:$u_{i_1\ell_1}$}] (uil1) at (0,9.6) {};
\node[circle,fill=black, label={[label distance=1mm]left:$u_{j_1k_1}$}] (ujk1) at (0,3.6) {};
\node[circle,fill=black, label={[label distance=1mm]left:$u_{j_1\ell_1}$}] (ujl1) at (0,4.4) {};
\node[circle,fill=black, label={[label distance=1mm]left:$u_{i_2k_2}$}] (uik2) at (0,7.4) {};
\node[circle,fill=black, label={[label distance=1mm]left:$u_{i_2\ell_2}$}] (uil2) at (0,6.6) {};
\node[circle,fill=black, label={[label distance=1mm]left:$u_{j_2k_2}$}] (ujk2) at (0,0.6) {};
\node[circle,fill=black, label={[label distance=1mm]left:$u_{j_2\ell_2}$}] (ujl2) at (0,1.4) {};

\node[circle,fill=black, label={[label distance=0mm]right:$v_{i_1k_1}$}] (vik1) at (3-0.3,10.4) {};
\node[circle,fill=black, label={[label distance=0mm]right:$v_{j_1k_1}$}] (vjk1) at (3-0.3,9.6) {};
\node[circle,fill=black, label={[label distance=0mm]right:$v_{i_1\ell_1}$}] (vil1) at (3-0.3,7.4) {};
\node[circle,fill=black, label={[label distance=0mm]right:$v_{j_1\ell_1}$}] (vjl1) at (3-0.3,6.6) {};
\node[circle,fill=black, label={[label distance=0mm]right:$v_{i_2k_2}$}] (vik2) at (3-0.3,4.4) {};
\node[circle,fill=black, label={[label distance=0mm]right:$v_{j_2k_2}$}] (vjk2) at (3-0.3,3.6) {};
\node[circle,fill=black, label={[label distance=0mm]right:$v_{i_2\ell_2}$}] (vil2) at (3-0.3,1.4) {};
\node[circle,fill=black, label={[label distance=0mm]right:$v_{j_2\ell_2}$}] (vjl2) at (3-0.3,0.6) {};

\draw[path] (0,10.4) -- (3-0.3,10.4) node[pos=0.7, above=1mm]{};
\draw[path] (0,9.6) -- (3-0.3,7.4) node[pos=0.2, above=1mm]{};
\draw[path] (0,3.6) -- (3-0.3,9.6) node[pos=0.5, above=6mm]{};
\draw[path] (0,4.4) -- (3-0.3,6.6) node[pos=0.8, above=2mm]{};
\draw[path] (0,7.4) -- (3-0.3,4.4) node[pos=0.8, above=2mm]{};
\draw[path] (0,6.6) -- (3-0.3,1.4) node[pos=0.6, above=5mm]{};
\draw[path] (0,0.6) -- (3-0.3,3.6) node[pos=0.5, above=4mm]{};
\draw[path] (0,1.4) -- (3-0.3,0.6) node[pos=0.7, below=2mm]{};

\end{scope}

\begin{scope}[red, ultra thick, opacity = 1]

\draw (uik1) -- (vik1);
\draw (vik1) -- (7.1,11);
\draw (vjl2) -- (7.1,0);
\draw (vjl2) -- (ujl2); 
\draw (ujl2.center) -- (ujk2.center);
\draw (ujk2) -- (vjk2); 

\draw (vil2) -- (uil2); 
\draw (uil2.center) -- (uik2.center); 
\draw (uik2) -- (vik2); 

\draw (vjl1) -- (ujl1);
\draw (ujl1.center) -- (ujk1.center); 
\draw (ujk1) -- (vjk1); 
\draw (vil1) -- (uil1); 
\draw (uil1.center) -- (uik1.center); 
\draw (vjk1) -- (vil1);
\draw (vik2) -- (vjl1); 
\draw (vjk2) -- (vil2); 
\end{scope}

\begin{scope}[local bounding box=3, xshift=9.8cm]
\draw[thick] (0,9) -- (0,11) node[pos=0.5, left=6mm] {};
\draw[thick] (0,6) -- (0,8) node[pos=0.5, left=6mm] {};
\draw[thick] (0,3) -- (0,5) node[pos=0.5, left=6mm] {};
\draw[thick] (0,0) -- (0,2) node[pos=0.5, left=6mm] {};

\draw [fill={rgb,255:red,200;green,200;blue,200}, fill opacity=0.7, draw=none] 
    (-0.6,0.3) -- (0.6,0.3) -- (0,1.8) -- cycle;
\draw [fill={rgb,255:red,200;green,200;blue,200}, fill opacity=0.7, draw=none] 
    (-0.6,6.3) -- (0.6,6.3) -- (0,7.8) -- cycle;

\draw[thick] (3-0.3,9) -- (3-0.3,11) node[pos=0.5, right=6mm] {};
\draw[thick] (3-0.3,6) -- (3-0.3,8) node[pos=0.5, right=6mm] {};
\draw[thick] (3-0.3,3) -- (3-0.3,5) node[pos=0.5, right=6mm] {};
\draw[thick] (3-0.3,0) -- (3-0.3,2) node[pos=0.5, right=6mm] {};

\node[circle,fill=black, label={[label distance=0mm]left:$u_{i_1k_1}$}] (uik1) at (0,10.4) {};
\node[circle,fill=black, label={[label distance=0mm]left:$u_{i_1\ell_1}$}] (uil1) at (0,9.6) {};
\node[circle,fill=black, label={[label distance=0mm]left:$u_{j_1k_1}$}] (ujk1) at (0,3.6) {};
\node[circle,fill=black, label={[label distance=0mm]left:$u_{j_1\ell_1}$}] (ujl1) at (0,4.4) {};
\node[circle,fill=black, label={[label distance=0mm]left:$u_{i_2k_2}$}] (uik2) at (0,7.4) {};
\node[circle,fill=black, label={[label distance=0mm]left:$u_{i_2\ell_2}$}] (uil2) at (0,6.6) {};
\node[circle,fill=black, label={[label distance=0mm]left:$u_{j_2k_2}$}] (ujk2) at (0,0.6) {};
\node[circle,fill=black, label={[label distance=0mm]left:$u_{j_2\ell_2}$}] (ujl2) at (0,1.4) {};

\node[circle,fill=black, label={[label distance=1mm]right:$v_{i_1k_1}$}] (vik1) at (3-0.3,10.4) {};
\node[circle,fill=black, label={[label distance=1mm]right:$v_{j_1k_1}$}] (vjk1) at (3-0.3,9.6) {};
\node[circle,fill=black, label={[label distance=1mm]right:$v_{i_1\ell_1}$}] (vil1) at (3-0.3,7.4) {};
\node[circle,fill=black, label={[label distance=1mm]right:$v_{j_1\ell_1}$}] (vjl1) at (3-0.3,6.6) {};
\node[circle,fill=black, label={[label distance=1mm]right:$v_{i_2k_2}$}] (vik2) at (3-0.3,4.4) {};
\node[circle,fill=black, label={[label distance=1mm]right:$v_{j_2k_2}$}] (vjk2) at (3-0.3,3.6) {};
\node[circle,fill=black, label={[label distance=1mm]right:$v_{i_2\ell_2}$}] (vil2) at (3-0.3,1.4) {};
\node[circle,fill=black, label={[label distance=1mm]right:$v_{j_2\ell_2}$}] (vjl2) at (3-0.3,0.6) {};

\draw[path] (0,10.4) -- (3-0.3,10.4) node[pos=0.7, above=1mm]{};
\draw[path] (0,9.6) -- (3-0.3,7.4) node[pos=0.2, above=1mm]{};
\draw[path] (0,3.6) -- (3-0.3,9.6) node[pos=0.5, above=6mm]{};
\draw[path] (0,4.4) -- (3-0.3,6.6) node[pos=0.8, above=2mm]{};
\draw[path,dash pattern=on 2pt off 1pt] (0,7.4) -- (3-0.3,4.4) node[pos=0.8, above=2mm]{};
\draw[path,dash pattern=on 2pt off 1pt] (0,6.6) -- (3-0.3,1.4) node[pos=0.6, above=5mm]{};
\draw[path,dash pattern=on 2pt off 1pt] (0,0.6) -- (3-0.3,3.6) node[pos=0.5, above=4mm]{};
\draw[path,dash pattern=on 2pt off 1pt] (0,1.4) -- (3-0.3,0.6) node[pos=0.7, below=2mm]{};
\end{scope}

\begin{scope}[blue,ultra thick,opacity = 1]

\draw (uik1) -- (vik1); 
\draw (vik1.center) -- (vjk1.center);
\draw (vjk1) -- (ujk1); 

\draw (ujl2) -- (vjl2); 
\draw (vjl2.center) -- (vil2.center); 
\draw (vil2) -- (uil2); 

\draw (ujl1) -- (vjl1);
\draw (vjl1) -- (vil1); 
\draw (vil1.center) -- (uil1.center); 

\draw (uik2) -- (vik2);
\draw (vik2.center) -- (vjk2.center); 
\draw (vjk2) -- (ujk2);
\draw (ujk2) -- (9.8,0); 
\draw (uik1) -- (9.8,11);
\draw (uil1) -- (uik2); 
\draw (uil2) -- (ujl1); 
\draw (ujl2.center) -- (ujk1.center); 
\end{scope}

\node[above] at (1.north) {Two 4-cycles of type $(1,0)$};
\node[above] at (2.north) {Cycle $Q_1$};
\node[above] at (3.north) {Cycle $Q_2$};

\end{tikzpicture}
\caption{A winning certificate $(Q_1,Q_2)$ corresponding to a configuration of two 4-cycles of type $(1,0)$.} \label{fig:3a}
\end{subfigure}

\vspace{0.5cm}

\begin{subfigure}{\textwidth}
    \centering
    \begin{tikzpicture}[
    every node/.style={font=\small\sf, inner sep=1pt},
    path/.style={thick, line width=1.2pt},
    hlpath/.style={ultra thick, line cap=round},
    xlabel/.style={text=black!80},
    ylabel/.style={text=black!80},
    >=latex,
    doublearr/.style={->, thick, line width=2pt, double, double distance=2pt},scale=0.85
]

\begin{scope}[local bounding box=1,xshift=-8mm]
\draw [fill={rgb,255:red,200;green,200;blue,200}, fill opacity=0.7, draw=none] 
    (0,10) ellipse [x radius=0.4, y radius=0.8];
\draw [fill={rgb,255:red,200;green,200;blue,200}, fill opacity=0.7, draw=none] 
    (0,4) ellipse [x radius=0.4, y radius=0.8];
\draw [fill={rgb,255:red,200;green,200;blue,200}, fill opacity=0.7, draw=none] 
    (2.7,10) ellipse [x radius=0.4, y radius=0.8];
\draw [fill={rgb,255:red,200;green,200;blue,200}, fill opacity=0.7, draw=none] 
    (2.7,7) ellipse [x radius=0.4, y radius=0.8];
\draw [fill={rgb,255:red,200;green,200;blue,200}, fill opacity=0.7, draw=none] 
    (-0.6,1.7) -- (0.6,1.7) -- (0,0.2) -- cycle;
\draw [fill={rgb,255:red,200;green,200;blue,200}, fill opacity=0.7, draw=none] 
    (-0.6,7.7) -- (0.6,7.7) -- (0,6.2) -- cycle;
\draw [fill={rgb,255:red,200;green,200;blue,200}, fill opacity=0.7, draw=none] 
    (2.3,4.8) rectangle (3.1,3.2);
\draw [fill={rgb,255:red,200;green,200;blue,200}, fill opacity=0.7, draw=none] 
    (2.3,1.8) rectangle (3.1,0.2);
\draw[thick] (0,9) -- (0,11) node[pos=0.0, left=3mm] {$X_{i_1}$};
\draw[thick] (0,6) -- (0,8) node[pos=0.0, left=3mm] {$X_{i_2}$};
\draw[thick] (0,3) -- (0,5) node[pos=0.0, left=3mm] {$X_{j_1}$};
\draw[thick] (0,0) -- (0,2) node[pos=0.0, left=3mm] {$X_{j_2}$};

\draw[thick] (3-0.3,9) -- (3-0.3,11) node[pos=0.0, right=3mm] {$Y_{k_1}$};
\draw[thick] (3-0.3,6) -- (3-0.3,8) node[pos=0.0, right=3mm] {$Y_{\ell_1}$};
\draw[thick] (3-0.3,3) -- (3-0.3,5) node[pos=0.0, right=3mm] {$Y_{k_2}$};
\draw[thick] (3-0.3,0) -- (3-0.3,2) node[pos=0.0, right=3mm] {$Y_{\ell_2}$};

\draw[->, thick] (0,9) -- ++(0,-0.2);
\draw[->, thick] (0,6) -- ++(0,-0.2);
\draw[->, thick] (3-0.3,3) -- ++(0,-0.2);
\draw[->, thick] (3-0.3,0) -- ++(0,-0.2);
\draw[->, thick] (0,3) -- ++(0,-0.2);
\draw[->, thick] (0,0) -- ++(0,-0.2);
\draw[->, thick] (3-0.3,9) -- ++(0,-0.2);
\draw[->, thick] (3-0.3,6) -- ++(0,-0.2);

\node[circle,fill=black, label={[label distance=1mm]left:$u_{i_1k_1}$}] (uik1) at (0,10.4) {};
\node[circle,fill=black, label={[label distance=1mm]left:$u_{i_1\ell_1}$}] (uil1) at (0,9.6) {};
\node[circle,fill=black, label={[label distance=1mm]left:$u_{j_1\ell_1}$}] (ujl1) at (0,3.6) {};
\node[circle,fill=black, label={[label distance=1mm]left:$u_{j_1k_1}$}] (ujk1) at (0,4.4) {};
\node[circle,fill=black, label={[label distance=1mm]left:$u_{i_2\ell_2}$}] (uil2) at (0,7.4) {};
\node[circle,fill=black, label={[label distance=1mm]left:$u_{i_2k_2}$}] (uik2) at (0,6.6) {};
\node[circle,fill=black, label={[label distance=1mm]left:$u_{j_2k_2}$}] (ujk2) at (0,0.6) {};
\node[circle,fill=black, label={[label distance=1mm]left:$u_{j_2\ell_2}$}] (ujl2) at (0,1.4) {};

\node[circle,fill=black, label={[label distance=1mm]right:$v_{i_1k_1}$}] (vik1) at (3-0.3,10.4) {};
\node[circle,fill=black, label={[label distance=1mm]right:$v_{j_1k_1}$}] (vjk1) at (3-0.3,9.6) {};
\node[circle,fill=black, label={[label distance=1mm]right:$v_{i_1\ell_1}$}] (vil1) at (3-0.3,7.4) {};
\node[circle,fill=black, label={[label distance=1mm]right:$v_{j_1\ell_1}$}] (vjl1) at (3-0.3,6.6) {};
\node[circle,fill=black, label={[label distance=1mm]right:$v_{i_2k_2}$}] (vik2) at (3-0.3,4.4) {};
\node[circle,fill=black, label={[label distance=1mm]right:$v_{j_2k_2}$}] (vjk2) at (3-0.3,3.6) {};
\node[circle,fill=black, label={[label distance=1mm]right:$v_{i_2\ell_2}$}] (vil2) at (3-0.3,1.4) {};
\node[circle,fill=black, label={[label distance=1mm]right:$v_{j_2\ell_2}$}] (vjl2) at (3-0.3,0.6) {};

\draw[path] (0,10.4) -- (3-0.3,10.4) node[pos=0.7, above=1mm] {$P_{i_1k_1}$};
\draw[path] (0,9.6) -- (3-0.3,7.4) node[pos=0.2, above=2mm] {$P_{i_1\ell_1}$};
\draw[path] (0,3.6) -- (3-0.3,9.6) node[pos=0.5, above=8mm] {$P_{j_1k_1}$};
\draw[path] (0,4.4) -- (3-0.3,6.6) node[pos=0.8, above=3mm] {$P_{j_1\ell_1}$};
\draw[path] (0,6.6) -- (3-0.3,4.4) node[pos=0.8, above=2.5mm] {$P_{i_2k_2}$};
\draw[path] (0,7.4) -- (3-0.3,1.4) node[pos=0.6, above=7mm] {$P_{i_2\ell_2}$};
\draw[path] (0,1.4) -- (3-0.3,3.6) node[pos=0.5, above=4mm] {$P_{j_2k_2}$};
\draw[path] (0,0.6) -- (3-0.3,0.6) node[pos=0.7, below=1.5mm] {$P_{j_2\ell_2}$};
\end{scope}

\draw[dashed, thick] (3.2,0) -- (3.2,11); 
\draw[double, double distance=2pt, -stealth, line width=0.8pt] (2.8,5.5) -- (3.6,5.5);
\begin{scope}[local bounding box=2, xshift=6cm] 

\end{scope}

\begin{scope}[local bounding box=2, xshift=4.4cm]
\draw[thick] (0,9) -- (0,11) node[pos=0.5, left=6mm] {};
\draw[thick] (0,6) -- (0,8) node[pos=0.5, left=6mm] {};
\draw[thick] (0,3) -- (0,5) node[pos=0.5, left=6mm] {};
\draw[thick] (0,0) -- (0,2) node[pos=0.5, left=6mm] {};

\draw [fill={rgb,255:red,200;green,200;blue,200}, fill opacity=0.7, draw=none] 
    (-0.6,1.7) -- (0.6,1.7) -- (0,0.2) -- cycle;
\draw [fill={rgb,255:red,200;green,200;blue,200}, fill opacity=0.7, draw=none] 
    (-0.6,7.7) -- (0.6,7.7) -- (0,6.2) -- cycle;

\draw[thick] (3-0.3,9) -- (3-0.3,11) node[pos=0.5, right=6mm] {};
\draw[thick] (3-0.3,6) -- (3-0.3,8) node[pos=0.5, right=6mm] {};
\draw[thick] (3-0.3,3) -- (3-0.3,5) node[pos=0.5, right=6mm] {};
\draw[thick] (3-0.3,0) -- (3-0.3,2) node[pos=0.5, right=6mm] {};

\node[circle,fill=black, label={[label distance=1mm]left:$u_{i_1k_1}$}] (uik1) at (0,10.4) {};
\node[circle,fill=black, label={[label distance=1mm]left:$u_{i_1\ell_1}$}] (uil1) at (0,9.6) {};
\node[circle,fill=black, label={[label distance=1mm]left:$u_{j_1\ell_1}$}] (ujl1) at (0,3.6) {};
\node[circle,fill=black, label={[label distance=1mm]left:$u_{j_1k_1}$}] (ujk1) at (0,4.4) {};
\node[circle,fill=black, label={[label distance=1mm]left:$u_{i_2\ell_2}$}] (uil2) at (0,7.4) {};
\node[circle,fill=black, label={[label distance=1mm]left:$u_{i_2k_2}$}] (uik2) at (0,6.6) {};
\node[circle,fill=black, label={[label distance=1mm]left:$u_{j_2k_2}$}] (ujk2) at (0,0.6) {};
\node[circle,fill=black, label={[label distance=1mm]left:$u_{j_2\ell_2}$}] (ujl2) at (0,1.4) {};

\node[circle,fill=black, label={[label distance=0mm]right:$v_{i_1k_1}$}] (vik1) at (3-0.3,10.4) {};
\node[circle,fill=black, label={[label distance=0mm]right:$v_{j_1k_1}$}] (vjk1) at (3-0.3,9.6) {};
\node[circle,fill=black, label={[label distance=0mm]right:$v_{i_1\ell_1}$}] (vil1) at (3-0.3,7.4) {};
\node[circle,fill=black, label={[label distance=0mm]right:$v_{j_1\ell_1}$}] (vjl1) at (3-0.3,6.6) {};
\node[circle,fill=black, label={[label distance=0mm]right:$v_{i_2k_2}$}] (vik2) at (3-0.3,4.4) {};
\node[circle,fill=black, label={[label distance=0mm]right:$v_{j_2k_2}$}] (vjk2) at (3-0.3,3.6) {};
\node[circle,fill=black, label={[label distance=0mm]right:$v_{i_2\ell_2}$}] (vil2) at (3-0.3,1.4) {};
\node[circle,fill=black, label={[label distance=0mm]right:$v_{j_2\ell_2}$}] (vjl2) at (3-0.3,0.6) {};

\draw[path] (0,10.4) -- (3-0.3,10.4) node[pos=0.7, above=1mm]{};
\draw[path] (0,9.6) -- (3-0.3,7.4) node[pos=0.2, above=1mm]{};
\draw[path] (0,3.6) -- (3-0.3,9.6) node[pos=0.5, above=6mm]{};
\draw[path] (0,4.4) -- (3-0.3,6.6) node[pos=0.8, above=2mm]{};
\draw[path] (0,6.6) -- (3-0.3,4.4) node[pos=0.8, above=2mm]{};
\draw[path] (0,7.4) -- (3-0.3,1.4) node[pos=0.6, above=5mm]{};
\draw[path] (0,1.4) -- (3-0.3,3.6) node[pos=0.5, above=4mm]{};
\draw[path] (0,0.6) -- (3-0.3,0.6) node[pos=0.7, below=2mm]{};

\end{scope}

\begin{scope}[red, ultra thick, opacity = 1]
\draw (uik1) -- (vik1); 
\draw (vik1) -- (7.1,11); 
\draw (vjl2) -- (7.1,0);
\draw (vjl2) -- (ujk2); 
\draw (ujl2.center) -- (ujk2.center);
\draw (ujl2) -- (vjk2); 

\draw (vil2) -- (uil2); 
\draw (uil2.center) -- (uik2.center); 
\draw (uik2) -- (vik2); 

\draw (vjl1) -- (ujk1);
\draw (ujl1.center) -- (ujk1.center); 
\draw (ujl1) -- (vjk1); 

\draw (vil1) -- (uil1);
\draw (uil1.center) -- (uik1.center); 
\draw (vjk1) -- (vil1); 
\draw (vik2) -- (vjl1); 
\draw (vjk2) -- (vil2); 
\end{scope}

\begin{scope}[local bounding box=3, xshift=9.8cm]
\draw[thick] (0,9) -- (0,11) node[pos=0.5, left=6mm] {};
\draw[thick] (0,6) -- (0,8) node[pos=0.5, left=6mm] {};
\draw[thick] (0,3) -- (0,5) node[pos=0.5, left=6mm] {};
\draw[thick] (0,0) -- (0,2) node[pos=0.5, left=6mm] {};

\draw[thick] (3-0.3,9) -- (3-0.3,11) node[pos=0.5, right=6mm] {};
\draw[thick] (3-0.3,6) -- (3-0.3,8) node[pos=0.5, right=6mm] {};
\draw[thick] (3-0.3,3) -- (3-0.3,5) node[pos=0.5, right=6mm] {};
\draw[thick] (3-0.3,0) -- (3-0.3,2) node[pos=0.5, right=6mm] {};

\draw [fill={rgb,255:red,200;green,200;blue,200}, fill opacity=0.7, draw=none] 
    (-0.6,1.7) -- (0.6,1.7) -- (0,0.2) -- cycle;
\draw [fill={rgb,255:red,200;green,200;blue,200}, fill opacity=0.7, draw=none] 
    (-0.6,7.7) -- (0.6,7.7) -- (0,6.2) -- cycle;

\node[circle,fill=black, label={[label distance=0mm]left:$u_{i_1k_1}$}] (uik1) at (0,10.4) {};
\node[circle,fill=black, label={[label distance=0mm]left:$u_{i_1\ell_1}$}] (uil1) at (0,9.6) {};
\node[circle,fill=black, label={[label distance=1mm]left:$u_{j_1\ell_1}$}] (ujl1) at (0,3.6) {};
\node[circle,fill=black, label={[label distance=1mm]left:$u_{j_1k_1}$}] (ujk1) at (0,4.4) {};
\node[circle,fill=black, label={[label distance=1mm]left:$u_{i_2\ell_2}$}] (uil2) at (0,7.4) {};
\node[circle,fill=black, label={[label distance=1mm]left:$u_{i_2k_2}$}] (uik2) at (0,6.6) {};
\node[circle,fill=black, label={[label distance=0mm]left:$u_{j_2k_2}$}] (ujk2) at (0,0.6) {};
\node[circle,fill=black, label={[label distance=0mm]left:$u_{j_2\ell_2}$}] (ujl2) at (0,1.4) {};

\node[circle,fill=black, label={[label distance=1mm]right:$v_{i_1k_1}$}] (vik1) at (3-0.3,10.4) {};
\node[circle,fill=black, label={[label distance=1mm]right:$v_{j_1k_1}$}] (vjk1) at (3-0.3,9.6) {};
\node[circle,fill=black, label={[label distance=1mm]right:$v_{i_1\ell_1}$}] (vil1) at (3-0.3,7.4) {};
\node[circle,fill=black, label={[label distance=1mm]right:$v_{j_1\ell_1}$}] (vjl1) at (3-0.3,6.6) {};
\node[circle,fill=black, label={[label distance=1mm]right:$v_{i_2k_2}$}] (vik2) at (3-0.3,4.4) {};
\node[circle,fill=black, label={[label distance=1mm]right:$v_{j_2k_2}$}] (vjk2) at (3-0.3,3.6) {};
\node[circle,fill=black, label={[label distance=1mm]right:$v_{i_2\ell_2}$}] (vil2) at (3-0.3,1.4) {};
\node[circle,fill=black, label={[label distance=1mm]right:$v_{j_2\ell_2}$}] (vjl2) at (3-0.3,0.6) {};

\draw[path] (0,10.4) -- (3-0.3,10.4) node[pos=0.7, above=1mm]{};
\draw[path] (0,9.6) -- (3-0.3,7.4) node[pos=0.2, above=1mm]{};
\draw[path] (0,3.6) -- (3-0.3,9.6) node[pos=0.5, above=6mm]{};
\draw[path] (0,4.4) -- (3-0.3,6.6) node[pos=0.8, above=2mm]{};
\draw[path] (0,6.6) -- (3-0.3,4.4) node[pos=0.8, above=2mm]{};
\draw[path] (0,7.4) -- (3-0.3,1.4) node[pos=0.6, above=5mm]{};
\draw[path] (0,1.4) -- (3-0.3,3.6) node[pos=0.5, above=4mm]{};
\draw[path] (0,0.6) -- (3-0.3,0.6) node[pos=0.7, below=2mm]{};
\end{scope}

\begin{scope}[blue,ultra thick,opacity = 1]

\draw (uik1) -- (vik1); 
\draw (vik1.center) -- (vjk1.center); 
\draw (vjk1) -- (ujl1); 

\draw (ujl2) -- (vjk2); 
\draw (vjl2.center) -- (vil2.center); 
\draw (vik2) -- (uik2);

\draw (ujl1) -- (vjk1);
\draw (vjl1) -- (vil1);
\draw (vil1.center) -- (uil1.center); 

\draw (uik2) -- (vik2); 
\draw (vik2.center) -- (vjk2.center);
\draw (vjl2) -- (ujk2); 
\draw (ujk2) -- (9.8,0); 
\draw (uik1) -- (9.8,11);
\draw (uil1) -- (uil2); 
\draw (uil2) -- (vil2); 
\draw (ujl2.center) -- (ujl1.center); 
\draw (uik2) -- (ujk1);
\draw (vjl1) -- (ujk1);
\end{scope}

\node[above] at (1.north) {Two 4-cycles of type $(1,0)$};
\node[above] at (2.north) {Cycle $Q_1$};
\node[above] at (3.north) {Cycle $Q_2$};

\end{tikzpicture}
\caption{The counter configuration from (a) with reversed order of pairs in triangle-shaped shadows.} \label{fig:3b}
\end{subfigure}
\caption{Key steps in the proof of Lemma~\ref{lem:crossing int}.}
\label{fig:both}
\end{figure}
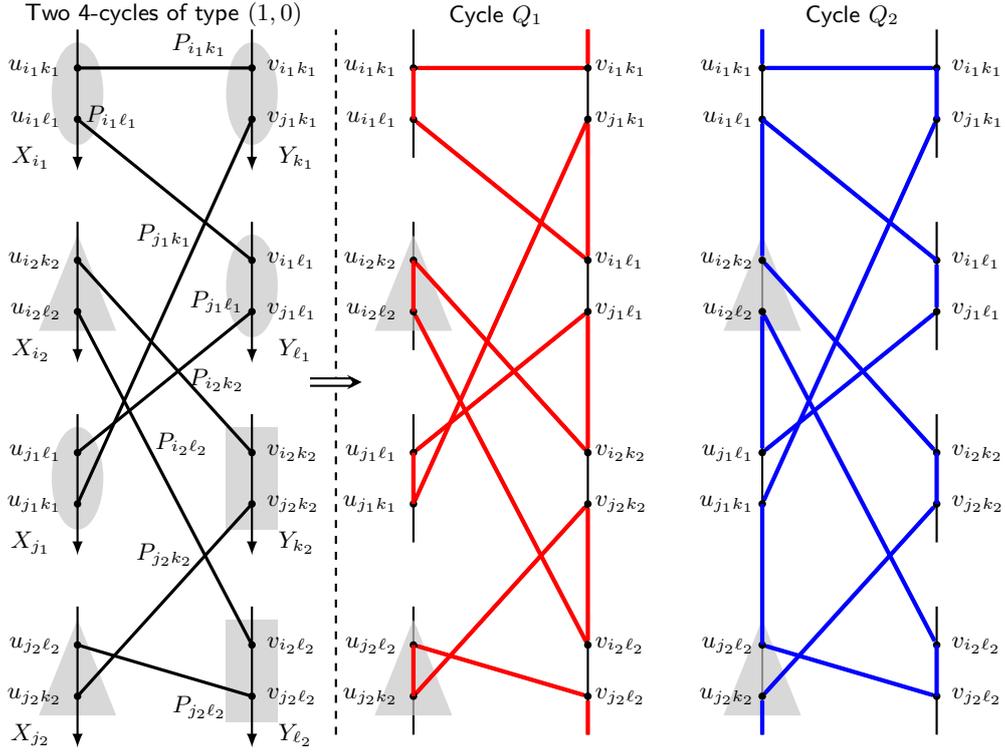
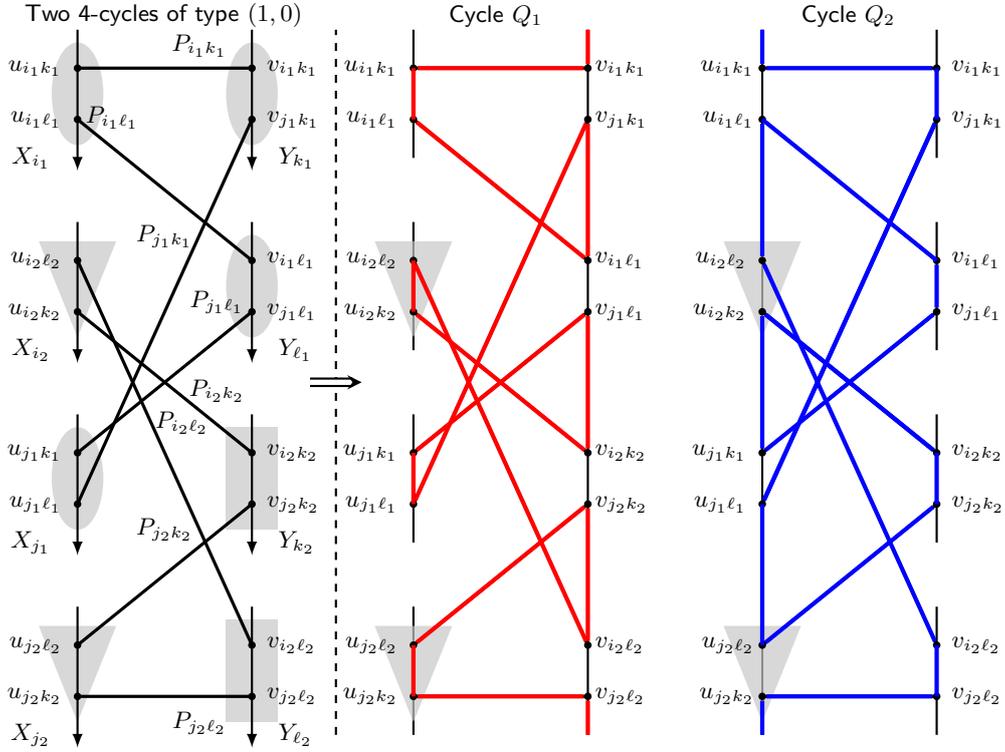

\begin{lemma}\label{lem:crossing int}
    Let $x_{i_1}y_{k_1}x_{j_1}y_{\ell_1}$ and $x_{i_2}y_{k_2}x_{j_2}y_{\ell_2}$ be two 4-cycles of type $(1,0)$ in $F$.
    If $(i_1,j_1)$ and $(i_2,j_2)$ are crossing, then either $\{k_1,\ell_1\}\cap\{k_2,\ell_2\}\neq \emptyset$, or $(k_1,\ell_1)$ and $(k_2,\ell_2)$ are crossing.
\end{lemma}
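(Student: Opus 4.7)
The plan is to argue by contradiction. Assume both failure conditions hold, i.e.\ $\{k_1,\ell_1\}\cap\{k_2,\ell_2\}=\emptyset$ and $(k_1,\ell_1),(k_2,\ell_2)$ are non-crossing on $Y$. I will exhibit a winning certificate built from the eight paths in $\mathcal{P}$ corresponding to the edges of the two $4$-cycles, contradicting the maximality of $X$ and $Y$.

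\emph{Stage 1 (normalization).} Using the symmetries noted after Definition~\ref{def:C4 types} (reversing $<_X$ or $<_Y$ preserves types) together with swapping the roles of the two $4$-cycles, I may assume the $X$-segments appear along $<_X$ in the cyclic order $X_{i_1},X_{i_2},X_{j_1},X_{j_2}$ (the canonical crossing pattern). For $Y$, the non-crossing, disjoint hypothesis yields two sub-arrangements: a disjoint one $Y_{k_1},Y_{\ell_1},Y_{k_2},Y_{\ell_2}$ and a nested one $Y_{k_1},Y_{k_2},Y_{\ell_2},Y_{\ell_1}$; the nested case is handled by an analogous cyclic rotation of $Y$, so I focus on the disjoint case.

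\emph{Stage 2 (unpacking type $(1,0)$).} The condition $\beta=0$ on each $4$-cycle lets me assume, after possibly reversing $<_Y$, that $v_{i_1 k_1}<_Y v_{j_1 k_1}$, $v_{i_1\ell_1}<_Y v_{j_1\ell_1}$, and analogously for the second $4$-cycle. The condition $\alpha=1$ forces the endpoints on $X_{i_t}$ and $X_{j_t}$ ($t\in\{1,2\}$) to occur in \emph{opposite} local orders. This leaves two essentially distinct sub-configurations for the relative placement of the second $4$-cycle within $X_{i_2},X_{j_2}$; these are exactly the configurations depicted in Figures~\ref{fig:3a} and \ref{fig:3b}.

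\emph{Stage 3 (the winning certificate).} In each sub-configuration, I construct cycles $Q_1$ (red) and $Q_2$ (blue) by splicing together arcs of $X$, arcs of $Y$, and all eight paths
\[
\mathcal{Q}=\{P_{i_1 k_1},P_{i_1\ell_1},P_{j_1 k_1},P_{j_1\ell_1},P_{i_2 k_2},P_{i_2\ell_2},P_{j_2 k_2},P_{j_2\ell_2}\},
\]
following the explicit pattern shown in the middle and right panels of Figures~\ref{fig:3a} and \ref{fig:3b}. The piecewise description is the natural eight-path analogue of the formulas in the proof of Lemma~\ref{lem:type 00}. A direct bookkeeping of edge usage confirms that
\[
E(Q_1)\cup E(Q_2)\supseteq E(X)\cup E(Y) \qquad \text{and}\qquad |Q_1|+|Q_2|=|X|+|Y|+2\sum_{P\in\mathcal{Q}}|P|,
\]
so $|Q_1|+|Q_2|>|X|+|Y|$, contradicting the maximality of $X$ and $Y$.

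The main obstacle is Stage 3: one must verify that the prescribed splices actually yield two simple cycles (rather than extra components or a single closed walk with repetitions) and that together they cover every edge of $X\cup Y$. This is precisely where the hypothesis is essential -- the $X$-crossing pattern $i_1<i_2<j_1<j_2$ together with $\alpha=1$ dictates a specific alternation of the four $X$-endpoints inside each of $X_{i_t}\cup X_{j_t}$, and the $Y$-non-crossing pattern together with $\beta=0$ produces a compatible alternation on the $Y$-side; together they force the eight path-endpoints to pair up into exactly two cycles. Splitting off a handful of degenerate situations (coincidences of indices or endpoints on the same segment) completes the argument and yields the desired contradiction.
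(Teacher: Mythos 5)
Your overall strategy coincides with the paper's: argue by contradiction, normalize the cyclic arrangement, do a finite case analysis, and in each case splice the eight paths with arcs of $X$ and $Y$ into a winning certificate $(Q_1,Q_2)$. The normalization in Stage~1 (using $<_X$, $<_Y$ reversal and swapping the two $4$-cycles to force $X$-cyclic order $X_{i_1},X_{i_2},X_{j_1},X_{j_2}$ and $Y$-cyclic order $Y_{k_1},Y_{\ell_1},Y_{k_2},Y_{\ell_2}$) matches the paper's setup, and your remark that the nested $Y$-arrangement reduces to the disjoint one by rotating the base point is a legitimate step that the paper leaves implicit.

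However, there is a genuine gap in Stage~2. After the two reversals are spent pinning down all four ordered pairs of the \emph{first} $4$-cycle (so that $u_{i_1k_1}<_X u_{i_1\ell_1}$, $u_{j_1\ell_1}<_X u_{j_1k_1}$, $v_{i_1k_1}<_Y v_{j_1k_1}$, $v_{i_1\ell_1}<_Y v_{j_1\ell_1}$), the \emph{second} $4$-cycle still has two independent binary degrees of freedom: $\alpha_2=1$ pins the $X_{j_2}$ pair once the $X_{i_2}$ pair is chosen (2 options), and $\beta_2=0$ pins the $Y_{\ell_2}$ pair once the $Y_{k_2}$ pair is chosen (2 options). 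You claim you may assume the $Y$-orderings of the second $4$-cycle ``analogously'' match the first; but the single reversal of $<_Y$ cannot enforce both $v_{i_1k_1}<_Y v_{j_1k_1}$ and $v_{i_2k_2}<_Y v_{j_2k_2}$ simultaneously, and no further symmetry (swapping $i_2\leftrightarrow j_2$, or $k_2\leftrightarrow\ell_2$, or reversing orders again) is available once the canonical cyclic arrangement is fixed. So there are $2\times 2 = 4$ non-isomorphic configurations to check, not $2$; the paper states this explicitly and exhibits two of them in Figures~\ref{fig:3a} and \ref{fig:3b}, noting that the remaining two (where the $Y$-side of the second $4$-cycle is reversed) are handled by similar but distinct constructions. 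Your proposal silently discards those two cases, and the winning certificates of Figures~\ref{fig:3a} and \ref{fig:3b} do not apply to them verbatim. You would need either to exhibit the two additional certificates or to justify a further symmetry collapsing the $Y$-side freedom, and no such symmetry survives the normalization you have already committed to.
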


\begin{proof}
Let $(i_1,j_1)$ and $(i_2,j_2)$ be crossing.  
Suppose, for the sake of contradiction, that $\{k_1,\ell_1\}\cap\{k_2,\ell_2\} = \emptyset$ and that $(k_1,\ell_1)$ and $(k_2,\ell_2)$ are non-crossing.  
Let $\mathcal{P}'$ denote the collection of the eight disjoint paths in $\mathcal{P}$ corresponding to the edges in the two 4-cycles.

We may assume, without loss of generality, that the four segments $X_{i_1}, X_{i_2}, X_{j_1}, X_{j_2}$ are cyclically arranged on $X$, and that $Y_{k_1}, Y_{\ell_1}, Y_{k_2}, Y_{\ell_2}$ are cyclically arranged on $Y$; 
see Figure~\ref{fig:both}.

By considering all possible relative orderings of the endpoints of the paths in $\mathcal{P}'$,  
we claim that there are four non-isomorphic cases that need to be examined in general.
To see this, we first consider the four pairs of endpoints corresponding to the 4-cycle $x_{i_1}y_{k_1}x_{j_1}y_{\ell_1}$,  
which are highlighted by the ellipse-shaped shadows in Figure~\ref{fig:both}.
Since this 4-cycle is of type $(\alpha,\beta)=(0,1)$, we can select the initial cyclic orderings of $X$ and $Y$ such that $u_{i_1k_1}<_X u_{i_1\ell_1}$ and $v_{i_1k_1}<_Y v_{j_1k_1}$.
Then since $\alpha=0$ and $\beta=1$, it follows that $u_{j_1\ell_1}<_X u_{j_1k_1}$ and $v_{i_1\ell_1}<_Y v_{j_1\ell_1}$.
In this way, we could assume that the four pairs covered by the ellipse-shaped shadows are fixed.
It remains to consider the four pairs of endpoints corresponding to the second 4-cycle $x_{i_2}y_{k_2}x_{j_2}y_{\ell_2}$,
where the pairs $\{u_{i_2k_2}, u_{i_2\ell_2}\}$ and $\{u_{j_2\ell_2}, u_{j_2k_2}\}$ are highlighted by the triangle-shaped shadows
and the pairs $\{v_{i_2k_2}, v_{j_2k_2}\}$ and $\{v_{i_2\ell_2}, v_{j_2\ell_2}\}$ are highlighted by the rectangle-shaped shadows in Figure~\ref{fig:both}.
Since this cycle has type $(0,1)$,  
the relative orderings in the two pairs covered by the triangle-shaped shadows are consistent (as $\alpha=0$), yielding two possibilities (see the differences between Figures~\ref{fig:3a} and \ref{fig:3b}).  
Similarly, the relative orderings in the two pairs covered by the rectangle-shaped shadows are inconsistent (as $\beta=1$), also giving two possibilities.
This proves the claim.

Therefore, it suffices to discuss the four cases, under the alternative orderings of the pairs covered by the triangle-shaped and rectangle-shaped shadows.
Consider one such instance in Figure~\ref{fig:3a}.
In this case, there exist two cycles $Q_1$ and $Q_2$ (depicted by red and blue, respectively) satisfying
$$E(Q_1)\cup E(Q_2)\supseteq E(X)\cup E(Y) \quad \mbox{and} \quad |Q_1|+|Q_2|=|X|+|Y|+2\sum\limits_{P\in \mathcal{P}'}|P|>|X|+|Y|,$$ contradicting the maximality of $X$ and $Y$.
This forms a winning certificate $(Q_1,Q_2)$.
Figure~\ref{fig:3b} considers another instance, where we preserve the relative orderings of pairs covered by the rectangle-shaped shadows, and reverse that covered by the triangle-shaped shadows.
It is clear from the figure that the red and blue lines yield another winning certificate.
The verifications for the remaining two cases are similar, which we omit here.
\end{proof}

The following lemma provides a key tool for obtaining an upper bound on the number of copies of $K_{2,7}$ in $F$.
For $x \in V(F)$, let $N_F(x)$ denote its neighborhood.
For $1\leq i<j\leq m$, we define $a_{ij}=\big|N_F(x_i)\cap N_F(x_j)\big|$.

\begin{lemma}\label{lem:K29 error}
Let $L = \{(i,j) : a_{ij} \ge 7, ~ 1 \le i < j \le m \}$. 
Then any two pairs $(i_1,j_1)$ and $(i_2,j_2)$ in $L$ are non-crossing.
\end{lemma}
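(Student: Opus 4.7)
The plan is to reduce to Lemma~\ref{lem:crossing int} by forcing the relevant 4-cycles to be of type $(1,0)$ via a two-bin pigeonhole, and then to extract a purely combinatorial contradiction from the resulting crossing constraints on $[m]$. Suppose for contradiction that $(i_1,j_1),(i_2,j_2) \in L$ are crossing. For each $r \in \{1,2\}$, set $A_r = N_F(x_{i_r}) \cap N_F(x_{j_r})$, so $|A_r| \ge 7$. Assign to each $y_k \in A_r$ a sign $s_k^{(r)} \in \{+,-\}$ recording the order of $v_{i_r k}$ and $v_{j_r k}$ on the segment $Y_k$. Pigeonhole yields $S_r \subseteq A_r$ with $|S_r| \ge \lceil 7/2 \rceil = 4$ on which the sign is constant. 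For any $\{k,\ell\} \in \binom{S_r}{2}$, the 4-cycle $x_{i_r} y_k x_{j_r} y_\ell$ has $\beta = 0$ by construction; Lemma~\ref{lem:type 00} then forces $\alpha = 1$, so it is of type $(1,0)$.

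Next, I would apply Lemma~\ref{lem:crossing int} to every pair of 4-cycles arising from $\{k_1,\ell_1\} \in \binom{S_1}{2}$ and $\{k_2,\ell_2\} \in \binom{S_2}{2}$. Together with the assumption that $(i_1,j_1),(i_2,j_2)$ are crossing, this yields the clean structural constraint: whenever $\{k_1,\ell_1\} \cap \{k_2,\ell_2\} = \emptyset$, the pairs $(k_1,\ell_1)$ and $(k_2,\ell_2)$ must be crossing.

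It remains to rule out that two $4$-element subsets $S_1, S_2 \subseteq [m]$ satisfy this constraint. I will case-analyze on $t := |S_1 \cap S_2|$. If $t \le 2$, pick $\{a_1,a_2\} \subseteq S_1 \setminus S_2$; then every one of the $\binom{4}{2}=6$ pairs of $S_2$ is disjoint from $\{a_1,a_2\}$ and must cross it, yet at most $\max_{0 \le t' \le 4} t'(4-t') = 4$ pairs of a $4$-element set can cross a fixed pair, contradiction. If $t = 3$, write $S_1 \cap S_2 = \{t_1 < t_2 < t_3\}$, $S_1 \setminus S_2 = \{a\}$, $S_2 \setminus S_1 = \{a'\}$; the disjoint pairs $\{t_1,t_2\} \in \binom{S_1}{2}$ and $\{t_3,a'\} \in \binom{S_2}{2}$ must cross, which (since $t_3 > t_2$) forces $a' \in (t_1,t_2)$, while $\{t_2,t_3\}$ and $\{t_1,a'\}$ analogously force $a' \in (t_2,t_3)$, an impossibility. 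Finally, if $t = 4$, then $S_1 = S_2 = \{t_1 < t_2 < t_3 < t_4\}$ and the disjoint pairs $\{t_1,t_2\}$, $\{t_3,t_4\}$ are not crossing.

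The step I expect to be most delicate is the $t = 3$ subcase, which requires selecting two well-matched pairs so that the two resulting constraints on the single outside element $a'$ are tight and jointly incompatible. The threshold $a_{ij} \ge 7$ in the definition of $L$ is precisely what the two-bin pigeonhole needs to guarantee $|S_r| \ge 4$, which in turn is what makes the counting in the $t \le 2$ case work and keeps the case analysis exhaustive.
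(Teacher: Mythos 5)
Your proof is correct and follows the same strategy as the paper's: use the two-bin pigeonhole on each $a_{i_r j_r} \ge 7$ set of common neighbours to extract a $K_{2,4}$ whose $4$-cycles all have $\beta = 0$, invoke Lemma~\ref{lem:type 00} to promote these to type $(1,0)$, feed pairs of such $4$-cycles into Lemma~\ref{lem:crossing int}, and finish with an elementary argument showing two $4$-element subsets of $[m]$ cannot satisfy the resulting crossing constraint. The only real difference is the finishing step: the paper constructs, via a three-element pigeonhole, two \emph{disjoint non-crossing} pairs from $\{k_1,\dots,k_4\}$ and $\{\ell_1,\dots,\ell_4\}$ and then applies the contrapositive of Lemma~\ref{lem:crossing int} directly, whereas you assume crossing, derive the constraint that every disjoint pair of pairs must cross, and rule this out by a case split on $|S_1 \cap S_2|$ using the bound $t'(4-t') \le 4 < \binom{4}{2}$. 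Both finishing arguments are correct and of comparable length; yours is perhaps a touch more transparent in exhibiting exactly where the threshold $7$ is needed. One small presentational point: in the $t=3$ and $t=4$ cases you tacitly assume $|S_1|=|S_2|=4$ (e.g.\ writing $S_1 \setminus S_2 = \{a\}$); this is harmless since one may always pass to $4$-element subsets, but it should be stated.
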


\begin{proof}
    Consider the pair $(i_1,j_1)$ and a copy of $K_{2,7}$ in $F$ with vertex set $\{x_{i_1},x_{j_1}\} \cup \{y_{k_t}:t\in[7]\}$.
    Let $\mathcal{Q}$ be the set of paths in $\mathcal{P}$ that correspond to the edges of this $K_{2,7}$.
    Each segment $Y_{k_t}$ contains two endpoints $v_{i_1k_t},v_{j_1k_t}$ from two paths in $\mathcal{Q}$ 
    whose relative ordering can be either $v_{i_1k_t} <_Y v_{j_1k_t}$ or $v_{i_1k_t} >_Y v_{j_1k_t}$.
    By the pigeonhole principle, at least $\lceil7/2\rceil=4$ distinct $Y$-segments share the same endpoint ordering.
    By Lemma~\ref{lem:type 00}, there is no 4-cycle of type $(0,0)$ in $F$.
    Therefore, we obtain a subgraph $K_{2,4}$ that only contains 4-cycles of type $(1,0)$.

    Applying above arguments to pairs $(i_1,j_1)$ and $(i_2,j_2)$, we obtain indices $1\leq k_1<\cdots<k_4\leq m$ and $1\leq \ell_1<\cdots<\ell_4\leq m$ such that both $F[\{x_{i_1},x_{j_1},y_{k_1},\cdots,y_{k_4}\}]$ and $F[\{x_{i_2},x_{j_2},y_{\ell_1},\cdots,y_{\ell_4}\}]$ are isomorphic to $K_{2,4}$, and any 4-cycle in these subgraphs has type $(1,0)$.
    
    We claim that there are two indices in $\{k_1,\cdots, k_4\}$ and two indices in $\{\ell_1,\cdots, \ell_4\}$ that are pairwise distinct and form non-crossing pairs.
    Suppose there are three indices $\{h_1,h_2,h_3\} \subseteq \{\ell_1,...,\ell_4\}$ distinct from $\{k_1,k_2\}$.
    By the pigeonhole principle, two of $h_1,h_2,h_3$ (say $h_1,h_2$) must both lie in the interval $(k_1,k_2)$ or both in $[1,k_1)\cup(k_2,m]$.
    Then the two disjoint pairs $(k_1,k_2)$ and $(h_1,h_2)$ are non-crossing, as claimed.
    Otherwise, we have $\{k_1,k_2\}\subseteq \{\ell_1,...,\ell_4\}$;
    similarly, we may assume that $\{k_3,k_4\}\subseteq \{\ell_1,...,\ell_4\}$. 
    This implies $k_t=\ell_t$ for $t\in [4]$.
    Thus, the two pairs $(k_1,k_2)$ and $(\ell_3,\ell_4)$  are non-crossing and share no common element, which proves the claim.
    
   Using this claim, we can deduce from Lemma~\ref{lem:crossing int} that $(i_1,j_1)$ and $(i_2,j_2)$ are non-crossing, completing the proof.
\end{proof}

We need one more lemma that provides an upper bound on the number of pairs that are pairwise non-crossing.

\begin{lemma}\label{lem:upp bound noncross}
Let $m \ge 2$ be an integer, and let $\{(i_t,j_t)\}_{t=1}^r$ be a collection of $r$ distinct pairs with $1 \le i_t < j_t \le m$ for all $t \in [r]$, such that any two pairs are non-crossing.  
Then $r \le 2m - 3$.
\end{lemma}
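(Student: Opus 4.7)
The plan is to interpret the lemma geometrically. Place the points $1, 2, \ldots, m$ from left to right on a horizontal line and draw each pair $(i, j)$ as a semicircular arc in the upper half-plane connecting $i$ and $j$. Two such arcs $(i_1, j_1)$ and $(i_2, j_2)$ meet transversally in the interior of the upper half-plane if and only if their four endpoints are distinct and interleave in the linear order, i.e.\ $i_1 < i_2 < j_1 < j_2$ up to swapping the pair labels; this is exactly the condition that $[i_1, j_1]$ contains exactly one of $\{i_2, j_2\}$, which is precisely the paper's definition of crossing. Hence the hypothesis that the $r$ pairs are pairwise non-crossing translates into a plane drawing of the graph $G = ([m], \mathcal{F})$ with all $m$ vertices on the outer face, so $G$ is outerplanar.

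Once this correspondence is set up, the bound $r \le 2m - 3$ is the classical edge bound for outerplanar graphs on $m \ge 2$ vertices. For completeness I would give the standard Euler-formula derivation: augment the drawing by adding as many non-crossing arcs as possible, so that every bounded face is a triangle; then $2E \ge 3(F-1) + m$ since each bounded face has size $\ge 3$ and the outer face has size $\ge m$, which combined with $V - E + F = 2$ yields $E \le 2m - 3$. Since augmenting only increases the edge count, the original $G$ satisfies the same bound.

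A self-contained induction on $m$ is available as a backup. The base $m = 2$ is immediate. For $m \ge 3$, either vertex $1$ lies in no pair (and recursion on $[2,m]$ gives $r \le 2m-5$), or one picks the largest $j_0$ with $(1, j_0) \in \mathcal{F}$, and observes that any pair not involving $1$ must lie entirely in $[1, j_0]$ or entirely in $[j_0, m]$, since a pair straddling $j_0$ with smaller endpoint $>1$ would cross $(1, j_0)$. Applying the inductive hypothesis to the two sub-intervals (and accounting separately for $(1, m)$ in the boundary case $j_0 = m$) balances to exactly $2m - 3$. The only step requiring a moment of care is verifying that the paper's combinatorial crossing condition coincides with geometric arc crossing, but this is immediate from the definitions, so I do not anticipate any genuine obstacle — the lemma is essentially just the outerplanar edge bound in disguise.
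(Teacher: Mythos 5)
Your main argument is correct and takes a genuinely different route from the paper. You recognize that the paper's combinatorial crossing condition for pairs $(i_1,j_1)$, $(i_2,j_2)$ — pairwise distinct endpoints with $[i_1,j_1]$ containing exactly one of $\{i_2,j_2\}$ — is precisely the interleaving condition $i_1<i_2<j_1<j_2$ (up to relabeling), which in turn is exactly when the corresponding upper half-plane arcs cross; shared endpoints and nested or disjoint arcs do not cross in either sense. Hence a pairwise non-crossing family is an arc diagram of an outerplanar graph on $[m]$, and the bound $r\le 2m-3$ is the standard outerplanar edge bound. The paper instead proves the bound from scratch by induction on $m$: it removes the pair $(1,m)$ if present, partitions the remaining intervals $[i_t,j_t]$ according to the inclusion-maximal intervals among them (these are pairwise internally disjoint by the non-crossing hypothesis), and applies the inductive hypothesis to each part, tallying $1+\sum_u f(j_u-i_u+1)\le 2m-3$. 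Your observation buys conceptual clarity — it names the lemma as a known theorem — at the cost of invoking Euler's formula and the triangulation argument, which the paper avoids by keeping everything elementary and self-contained. Your backup induction is in the same spirit as the paper's but uses a different pivot (the largest $j_0$ with $(1,j_0)$ present, splitting at $j_0$, rather than all maximal intervals at once); as you note in passing, it requires handling the boundary case $j_0=m$ separately, and it is worth being slightly careful that pairs $(1,j)$ with $j<j_0$ land correctly in the $[1,j_0]$ block, but there is no genuine gap.
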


\begin{proof}
    Let $f(m)$ denote the maximum number of such pairs.  
    We proceed by induction on $m$ to show that $f(m)\leq 2m-3$.
    First observe that $f(2)=1$ and $f(m)$ is non-decreasing in $m$.
    Assume that $f(k)\leq 2k-3$ holds for all $2\leq k\leq m-1$.
    Consider a collection of pairs $\{(i_t,j_t)\}_{t=1}^{f(m)}$ of size $f(m)$.
    Define $\mathcal{I}=\{[i_t,j_t]:t\in [f(m)]\}\setminus\{[1,m]\}$ to be a collection of close intervals.
    An interval in $\mathcal{I}$ is called {\it maximal}, if it is maximal under interval inclusion.
    Without loss of generality, we can assume that $\mathcal{I}'=\{[i_t,j_t]\}_{t=1}^s$ contains all maximal intervals in $\mathcal{I}$.
    We point out that these maximal intervals must be pairwise internally-disjoint, because any two pairs $(i_t,j_t), (i_{t'},j_{t'})$ are non-crossing. 
    Moreover, each interval in $\mathcal{I}$ belongs to exactly one maximal interval. 

    If $|\mathcal{I}'|=1$, then each pair $(i_t,j_t)$ except for $(1,m)$ satisfies $i_1\leq i_t< j_t\leq j_1$.
    This implies $$f(m)\leq 1+f(j_1-i_1+1)\leq 1+f(m-1)\leq 2m-4.$$
    Now we may assume $s=|\mathcal{I}'|\geq 2$. 
    Then, using properties mentioned above, for each pair $(i_t,j_t)\in \mathcal{I}$ except for $(1,m)$, there is exactly one index $u\in [s]$ such that $i_u\leq i_t<j_t\leq j_u$.
    This yields $$f(m)\leq 1+\sum\limits_{u\in [s]}f(j_u-i_u+1)\leq 1+2\sum\limits_{u\in [s]}\big(j_u-i_u\big)-s\leq 1+2(m-1)-s\leq 2m-3,$$
where the second inequality follows by induction and the third inequality holds because $[i_u,j_u]$ for all $u\in [s]$ are pairwise internally-disjoint. The proof is complete. 
\end{proof}

We are ready to prove Theorem~\ref{thm:lct upp}.

\begin{proof}[\bf Proof of Theorem~\ref{thm:lct upp}.]
Let $X, Y$ be two longest cycles in a graph $G$ which shares $m$ vertices. 
Consider any collection $\mathcal{P}$ of disjoint $(X-M,Y-M)$-paths in $G$, where $M=V(X)\cap V(Y)$.
Let $F := F(X,Y,\mathcal{P})$ be defined from Definition~\ref{def:aux F}, and let $\mathcal{E}=e(F)=|\mathcal{P}|$.
We aim to show that $$|\mathcal{P}|=\mathcal{E}\leq \sqrt{10}m^\frac{3}{2}+\frac{m}{2}.$$
We may assume $\mathcal{E}\geq m$. Under this assumption, the convexity of the function $\frac{x(x-1)}{2}$ yields $$\sum\limits_{1\leq i<j\leq m}a_{ij}=\sum\limits_{i\in [m]}\dbinom{\deg_F(y_i)}{2}\geq m\cdot\dbinom{\sum\limits_{i\in [m]}\deg_F(y_i)\big/m}{2}=m\binom{\mathcal{E}/m}{2}=\frac{\mathcal{E}(\mathcal{E}-m)}{2m}.$$
Let $\mathbf{1}_A$ denote the indicator function of event $A$.
Then we have $$\sum\limits_{1\leq i<j\leq m}a_{ij}\cdot\mathbf{1}_{\{a_{ij}\geq 7\}}\geq\sum\limits_{1\leq i<j\leq m}a_{ij}-6\dbinom{m}{2}\geq \frac{\mathcal{E}(\mathcal{E}-m)}{2m}-3m(m-1).$$
Recall that $L = \{(i,j) : a_{ij} \ge 7, ~ 1 \le i < j \le m \}$. 
Evidently $a_{ij}=\big|N_F(x_i)\cap N_F(x_j)\big|\leq m$ for all $1\leq i<j\leq m$, so we can deduce that $$|L|=\sum\limits_{1\leq i<j\leq m}\mathbf{1}_{\{a_{ij}\geq7\}}\geq \sum\limits_{1\leq i<j\leq m}a_{ij}\cdot\mathbf{1}_{\{a_{ij}\geq7\}}\big/m\geq \frac{\mathcal{E}(\mathcal{E}-m)}{2m^2}-3(m-1).$$
By Lemma~\ref{lem:K29 error}, the pairs in $L$ are pairwise non-crossing.
Applying Lemma~\ref{lem:upp bound noncross}, we obtain $$\frac{\mathcal{E}(\mathcal{E}-m)}{2m^2}-3(m-1)\leq |L|\leq 2m-3.$$
Solving this inequality gives $\mathcal{E}^2-m \mathcal{E}-(10m^3-12m^2)\leq 0$,
which implies $|\mathcal{P}|=\mathcal{E}\leq \sqrt{10}m^\frac{3}{2}+\frac{m}{2},$ as desired. 
Thus, there are at most $\sqrt{10}\, m^{3/2} + \frac{m}{2}$ disjoint paths in $G$ between $X-M$ and $Y-M$.  
By Menger's Theorem, this implies the existence of a vertex cut of size at most $\sqrt{10}\, m^{3/2} + \frac{m}{2}$ separating $X-M$ and $Y-M$.  
Adding the set $M$ of size $m$, we obtain a vertex cut of size at most $\sqrt{10}\, m^{3/2} + \frac{3m}{2}$ separating $X$ and $Y$ in $G$, completing the proof.
\end{proof}

We conclude this section with a proof of  Corollary~\ref{cor:vert-tran-c(G)}.

\begin{proof}[\bf Proof of Corollary~\ref{cor:vert-tran-c(G)}.]
Let $G$ be a connected $d$-regular vertex-transitive graph on $n$ vertices, where $n>d\geq 2$.
Then $G$ is \(\Omega(d)\)-connected (see~\cite{godsil2013algebraic}).
By Corollary~\ref{cor:smith}, every two longest cycles in $G$ intersect in $\Omega(d^{2/3})$ vertices.
Let $A$ denote the set of vertices in a fixed longest cycle. 
Then $A$ is a $t$-transversal in $G$ for some $t=\Omega(d^{2/3})$.
Applying equation~\eqref{equ:DeVos}, we obtain $$c(G)\geq \sqrt{tn}=\sqrt{\Omega(d^{2/3})n}=\Omega(d^\frac{1}{3}\cdot n^\frac{1}{2}).$$
This provides the existence of a desired cycle in $G$. 
\end{proof}


\section{Proof of Theorem~\ref{thm:transitive babai}}
In this section we prove Theorem~\ref{thm:transitive babai}, which asserts that any two longest cycles in a connected vertex-transitive graph must share a sufficiently large set of vertices.

To proceed, we employ a structural lemma on connected vertex-transitive graphs due to DeVos and Mohar~\cite{devos2006small}. 
It asserts that in a connected vertex-transitive graph, a small separator cannot disconnect two large sets unless the graph possesses a special global structure. 
For a formal statement, we first introduce some notation. 
Let $G$ be a connected vertex-transitive graph. 
For any $x,y\in V(G)$, let $\mathrm{dist}(x,y)$ denote the length of a shortest $(x,y)$-path in $G$, and define
\[
\mathrm{diam}(G)=\max_{x,y\in V(G)} \mathrm{dist}(x,y).
\]
The \textit{neighborhood} $N(A)$ of a subset $A \subseteq V(G)$ denotes the set of all vertices in $V(G) \setminus A$ adjacent to some vertex in $A$.
    A partition $\sigma=\{B_1,\ldots, B_k\}$ of $V(G)$ is called a \textit{system of imprimitivity} if for every automorphism $g$ of $G$ and every set $B \in \sigma$, the image $B^g:=g(B)$ remains in $\sigma$. These vertex sets $B \in \sigma$ are called \textit{blocks}.
    A \textit{cyclic system} $\vec{\sigma}$ on $G$ is a system of imprimitivity $\sigma$ equipped with a cyclic ordering of the blocks that is preserved by the automorphism group $\Aut(G)$ of $G$.
    For positive integers $s,t$, we say $G$ is {\it $(s,t)$-ring-like} with respect to $\vec{\sigma}$,  if each block of $\vec{\sigma}$ has size $s$ and any two adjacent vertices in $G$  belong to blocks that are at distance at most $t$ in the cyclic ordering of $\vec{\sigma}$.

 \begin{thm}[DeVos and Mohar, Theorem 1.9 in\cite{devos2006small}]\label{thm:devos sep}
    For every integer $k\geq 1$ and every connected vertex-transitive graph $G$ with $\mathrm{diam}(G)\geq 31k^2$, if a non-empty set $A\subseteq V(G)$ satisfies $|A|\leq |V(G)|/2$ , $|N(A)|=k$, and $G[A\cup N(A)]$ is connected, then one of the following holds.
    \begin{itemize}
        \item[(1)] $|A|\leq 2k^3+k^2$;
        \item[(2)] There exist positive integers $s,t$ with $st\leq k/2$ and a cyclic system $\vec{\sigma}$ on $G$ such that $G$ is $(s,t)$-ring-like with respect to $\vec{\sigma}$, and there exists an interval $J$ of $\vec{\sigma}$ with $Q:=\bigcup\limits_{B\in J}B$ satisfying $$Q \supseteq A\mbox{ \quad and \quad } |Q\setminus A|\leq \frac{k^3}{2}+k^2.$$   
    \end{itemize}
\end{thm}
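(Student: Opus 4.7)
The plan is to follow the general framework for fragments and atoms in vertex-transitive graphs developed by Watkins, Mader, and subsequently refined by DeVos and Mohar. First I would replace the given set $A$ by a minimum-size \emph{atom} $A^*$, namely a set satisfying $|A^*|\le |V(G)|/2$, $|N(A^*)|\le k$, and $G[A^*\cup N(A^*)]$ connected, with $|A^*|$ as small as possible subject to those conditions. By vertex-transitivity, every vertex of $G$ lies in some image $g(A^*)$ with $g\in\Aut(G)$, so the orbit of $A^*$ covers $V(G)$, and the structure of how these images sit relative to each other will determine which alternative in the conclusion holds.

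The core technical tool is submodularity of the vertex cut: whenever $B\cap C$ and $V(G)\setminus(B\cup C)$ are nonempty,
\[
|N(B\cap C)|+|N(B\cup C)|\le |N(B)|+|N(C)|.
\]
Applied to $B=A^*$ and $C=g(A^*)$ that are \emph{crossing} (meaning all four sets $A^*\cap g(A^*),\, A^*\setminus g(A^*),\, g(A^*)\setminus A^*,\, V(G)\setminus (A^*\cup g(A^*))$ are nonempty), this estimate, combined with the minimality of $|A^*|$, forces the inequality to hold with equality and imposes rigid control on the sizes $|A^*\cap g(A^*)|$ and on the neighborhoods. I would then perform a case split: if no two orbit representatives cross, the orbit is laminar, and transitivity of $\Aut(G)$ collapses the laminar family to a bounded structure forcing $|A^*|=O(k)$, which propagates to the bound $|A|\le 2k^3+k^2$ in case (1). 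If some pair of images crosses, the equality case of submodularity propagates along the orbit, producing a block system $\sigma$ in which $A^*$ (up to a small boundary) is a union of consecutive blocks.

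In the crossing case, I would extract the cyclic system $\vec\sigma$ by showing that the ``consecutive'' relation among blocks induced by crossing images is transitive and $\Aut(G)$-invariant, and that each block has a common size $s$ while each edge of $G$ spans blocks at distance at most some $t$, with $st\le k/2$ obtained by counting the contribution of block boundaries to $|N(A^*)|$. The hypothesis $\mathrm{diam}(G)\ge 31k^2$ enters precisely to rule out the degenerate situation where the number of blocks is too small for $\vec\sigma$ to be genuinely cyclic (and to keep case (2) from degenerating into case (1)). Finally, returning from $A^*$ to the original $A$, I would show $A$ is sandwiched between two intervals of $\vec\sigma$ whose symmetric difference has size at most $\tfrac{k^3}{2}+k^2$, yielding the interval $J$ and the set $Q$ of case (2).

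The main obstacle, I expect, will be the middle step: extracting a globally consistent cyclic ordering from local crossing data. It is not a priori clear that different pairs of crossing images align into one cyclic system rather than producing contradictory orderings; this requires carefully exploiting the equality case of submodularity and the transitive action of $\Aut(G)$ on the block system, together with an argument that rules out small ``anomalous'' orbits by means of the diameter lower bound. Tightening the constants to the specific values $2k^3+k^2$, $\tfrac{k^3}{2}+k^2$, and $31k^2$ will then be a matter of careful bookkeeping of boundary contributions and the depth of the laminar family, rather than of new ideas.
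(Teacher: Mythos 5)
This statement is quoted verbatim from DeVos and Mohar (their Theorem 1.9); the present paper does not give a proof, so there is no internal argument to compare your sketch against. Evaluated on its own merits, your sketch does point in the right direction at the coarse level: the actual DeVos--Mohar argument is indeed built on the Watkins--Mader theory of fragments and atoms, submodularity of the boundary function, crossing versus laminarity of images of an atom under the automorphism group, and the passage from an equality case of submodularity to a block system with a ring structure. So your outline is not a different route; it is a compressed synopsis of the route.

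However, two of the steps you describe as routine are in fact the content of the theorem and are not substantiated by your sketch. First, the claim that ``no two orbit representatives cross'' implies ``the laminar family collapses to a bounded structure forcing $|A^*| = O(k)$'' is not an elementary consequence of transitivity; a vertex-transitive graph can have a laminar orbit of small-boundary sets of size far larger than $k$ (consider a long cycle with a small separator), and it is precisely the interplay with the diameter hypothesis and the minimality of the atom that yields any size control. Even granting $|A^*| = O(k)$, you still owe an argument for how a bound on the \emph{atom} propagates to the bound $|A| \le 2k^3 + k^2$ on the \emph{original} set $A$, which you acquired only through $|N(A)| = k$ and connectivity of $G[A \cup N(A)]$; $A$ need not itself be a fragment of minimum size, and the cubic growth from $k$ to $k^3$ signals that this step is nontrivial. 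Second, the ``main obstacle'' you correctly identify---gluing local crossing data into a globally consistent cyclic system preserved by $\Aut(G)$, with the quantitative constraints $st \le k/2$ and $|Q\setminus A| \le k^3/2 + k^2$---constitutes essentially all of the real work in DeVos and Mohar's proof. Acknowledging that a step is hard is not the same as indicating how to carry it out, so as it stands your proposal is a plan of attack rather than a proof.
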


The following observation can be verified directly.
\begin{obs}\label{obs:cycle merge}
Let \( X \) and \( Y \) be two cycles in a graph \( G \).  
Let \( \{P_1, \dots, P_r\} \) be a collection of pairwise disjoint subpaths of \( X \), and let \( \{Q_1, \dots, Q_r\} \) be a collection of pairwise disjoint subpaths of \( Y \) such that, for each \( i \in [r] \),  
\begin{itemize}
    \item \( P_i \) and \( Q_i \) have the same endpoints \( x_{2i-1}, x_{2i} \),  
    \item the vertices \( x_1, x_2, \ldots, x_{2r} \) appear in the same cyclic order along both \( X \) and \( Y \), and  
    \item each \( Q_i \) could only intersect \( X \) at vertices of the subpaths \( P_j \).
\end{itemize}
Then the subgraph obtained from \( X \) by replacing \( P_i \) with \( Q_i \) for each $i\in [r]$ (i.e., deleting all edges of \( P_i \) and adding all edges of \( Q_i \)) forms a cycle in \( G \).
\end{obs}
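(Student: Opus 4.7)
The plan is to verify directly that the subgraph $H$ obtained from $X$ by deleting the edges of all $P_i$ and adding the edges of all $Q_i$ coincides with the trace of a certain closed walk, and then that this walk is a simple cycle. Since the $P_i$ are pairwise disjoint subpaths of the cycle $X$ whose $2r$ endpoints $x_1,\ldots,x_{2r}$ appear in cyclic order on $X$, removing $E(P_1)\cup\cdots\cup E(P_r)$ from $X$ leaves $r$ pairwise internally-disjoint arcs $R_0,R_1,\ldots,R_{r-1}$ of $X$, where $R_0$ is the arc of $X$ from $x_{2r}$ to $x_1$ and $R_j$ is the arc of $X$ from $x_{2j}$ to $x_{2j+1}$ for $1\le j\le r-1$. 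The crucial feature of this decomposition is that the interior of each $R_j$ is disjoint from $\bigcup_k V(P_k)$, since the $R_j$ fill precisely the complementary arcs of $X$.

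I would then concatenate the arcs and the $Q_i$ in cyclic order to form the closed walk
\[
W \;=\; R_0\,Q_1\,R_1\,Q_2\,\cdots\,R_{r-1}\,Q_r,
\]
whose endpoints match because $R_{j-1}$ ends and $Q_j$ begins at $x_{2j-1}$, while $Q_j$ ends and $R_j$ begins at $x_{2j}$. The main step is to show $W$ is a simple cycle. Four facts suffice: (i) the $R_j$ have pairwise disjoint interiors by construction; (ii) the $Q_i$ are pairwise vertex-disjoint as pairwise disjoint subpaths of $Y$; (iii) the interior of any $Q_i$ meets $V(X)$ only inside $\bigcup_k V(P_k)$ by hypothesis, hence is disjoint from the interior of every $R_j$; and (iv) no $x_j$ can lie in the interior of some $Q_i$, for otherwise $x_j\in V(Q_i)\cap V(Q_{\lceil j/2\rceil})$ while $x_j$ is an endpoint of $Q_{\lceil j/2\rceil}$, contradicting disjointness of the $Q$'s unless $i=\lceil j/2\rceil$, which then contradicts $x_j$ being a proper interior vertex of $Q_i$. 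Together these four observations show that no vertex is repeated along $W$.

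Finally, since $E(H)=\bigcup_j E(R_j)\cup\bigcup_i E(Q_i)$ is precisely the edge set of $W$, we obtain $H=W$, a cycle in $G$. The only real subtlety, and the main point to keep straight, is the correct use of the third hypothesis: the condition $V(Q_i)\cap V(X)\subseteq\bigcup_k V(P_k)$ is exactly what prevents an interior vertex of $Q_i$ from slipping onto a surviving arc $R_j$; without it, $H$ could fragment into several vertex-disjoint cycles rather than a single cycle. The remainder is routine bookkeeping of vertex incidences.
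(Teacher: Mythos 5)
Your proof is correct. The paper does not actually give a proof of this observation --- it merely states that it ``can be verified directly'' --- so there is nothing to compare against; your argument simply spells out that routine verification. The decomposition into the surviving arcs $R_0,\dots,R_{r-1}$ of $X$, the concatenation into the closed walk $W=R_0Q_1R_1\cdots R_{r-1}Q_r$, and the four disjointness checks (most importantly the use of the third hypothesis to keep interior vertices of $Q_i$ off the interiors of the $R_j$, and the observation that no $x_j$ can be an interior vertex of any $Q_i$ because the $Q$'s are pairwise disjoint) are exactly what is needed, and the bookkeeping is sound.
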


We now present the proof of Theorem~\ref{thm:transitive babai}, which uses the bound of Theorem~\ref{thm:lct upp}.
For a family $\mathcal{H}$ of subgraphs,  we define $V(\mathscr{H}) := \bigcup_{H \in \mathscr{H}} V(H)$ and $E(\mathscr{H}) := \bigcup_{H \in \mathscr{H}} E(H)$.

\begin{proof}[\bf Proof of Theorem~\ref{thm:transitive babai}.]
    Let $G$ be a connected $d$-regular vertex-transitive graph on $n$ vertices.
    Let $X,Y$ be any two longest cycles in $G$, and let $m:=|V(X)\cap V(Y)|\geq 1$. 
    Our goal is to show $m\geq \big(\log_d n\big)^{1/3}/100$.
    It suffices to assume that $d\geq 3$.
    
    Let $k$ denote the size of a minimum $(X,Y)$-separator in $G$. 
    Then there exists a subset $A\subseteq V(G)$ with $|A|\leq n/2$ such that $N(A)$ is a vertex cut of $G$ of size $k$ separating $X$ and $Y$, and $G[A\cup N(A)]$ contains exactly one of $X$ and $Y$ (without loss of generality, say $X$). 
    This implies 
    \begin{align}\label{equ:|A|-lower}
       |A|\geq |V(X)|-|N(A)|=c(G)-k.
    \end{align}
    Furthermore, we can assume that $G[A \cup N(A)]$ is connected, as otherwise we instead consider the intersection of $A$ with the component of $G[A \cup N(A)]$ that contains $X$.
    By the proof of Theorem~\ref{thm:lct upp}, we have $k\leq 10m^{3/2}+3m/2$. 
    If $k\geq \frac{\log_d^{1/2}n}{10}$, then $m\geq \frac{\log_d^{1/3}n}{100}$, which gives the desired bound. 
    We may therefore assume $k<\frac{\log_d^{1/2}n}{10}$ and equivalently, 
   \begin{align}\label{equ:n-lower}
          n> d^{100k^2}\geq 3^{100k^2}.  
    \end{align}  
    Since $G$ is $d$-regular, we have $\mathrm{diam}(G)\geq \log_{d-1}(n/d)\geq 31k^2$. 
    Applying Theorem~\ref{thm:devos sep} for $G$ and $A$, 
    we conclude that one of the items (1) or (2) holds.

First, suppose that item~(1) holds; that is, $|A|\leq 2k^3+k^2$. 
By Babai~\cite{babai1979long} we have $c(G)\geq 3\sqrt{n}$. 
Together with \eqref{equ:|A|-lower}, this yields
$3\sqrt{n}-k\leq c(G)-k\leq |A|\leq 2k^3+k^2$.
Hence, $n\leq \frac{1}{9}(2k^3+k^2+k)^2<3^{100k^2}$,
which contradicts \eqref{equ:n-lower}.
Therefore, from now on, we may assume that item (2) holds. 

    Then there exists a cyclic system $\vec{\sigma}$ on $G$ such that $G$ is $(s,t)$-ring-like with respect to $\vec{\sigma}$, where $st\leq k/2$, and there exists an interval $J$ of $\vec{\sigma}$ satisfying that $Q:=\bigcup_{B\in J}B\supseteq A$ and $|Q\setminus A|\leq \frac{k^3}{2}+k^2.$
    Let $B_1,\ldots,B_{n/s}$ be all blocks in the cyclic system $\vec{\sigma}$. 
    Without loss of generality, we may assume that for some $j\in [n/s]$, $V(X)\subseteq \bigcup_{i\in [j]}B_i$, 
    where $V(X)\cap V(B_1)\neq \emptyset$ and $V(X)\cap V(B_j)\neq \emptyset$.
    Since $V(X)\subseteq A\cup N(A)\subseteq \left(\bigcup_{B\in J}B\right)\cup N(A)$ and $|N(A)|=k$, we deduce that $|J|\geq j-k$.

    We first claim that $j \geq n/(2s)$. Suppose for a contradiction that $j < n/(2s)$. 
    By the vertex-transitivity of $G$, there is an automorphism $g\in \mathrm{Aut}(G)$ that maps $B_1$ to $B_{j+1}$. 
    Then the image $X^g$ is also a longest cycle in $G$, whose vertices belong to $\bigcup_{i\in [j+1,2j+1]}B_i$.
    Since $2j+1\leq n/s$, evidently we have $V(X)\cap V(X^g)=\emptyset$, which contradicts Proposition~\ref{prop:intersect}.
    This proves our claim that $j\geq n/(2s)$. In particular, $j\geq n/(2s) \geq 3^{100k^2}/k\geq 10^{k^2}$.

    We now proceed to establish the key step of the proof, namely that 
    \begin{align}\label{equ:key-ineq}
     j \geq n/s - 10^{k^2}.
    \end{align}
    Suppose, for the sake of contradiction, that this is not the case. 
    Then $10^{k^2}<3^{100k^2}/k\leq n/(2s)\leq j< n/s-10^{k^2}$.
    We aim to construct a cycle in $G$ which is strictly longer than $X$.
    
    For each $\ell\in [n/s-t+1]$, let $E_\ell$ denote the set of edges in $X$ incident to some vertex in $\bigcup_{i\in [\ell,\ell+t-1]}B_i$.
    Note that the edges of $E_\ell$ are decomposed into disjoint subpaths of $X$ with length at least two. 
    By the vertex-transitivity of $G$, for every $i\in [j]$, there is an automorphism $g_i\in \mathrm{Aut}(G)$ that maps $B_i$ to $B_1$.
    For each $\ell\in [j]$, we write $E_{\ell}^{g_\ell}$ for the image of $E_\ell$ under $g_\ell$, so that 
    \begin{align}\label{equ:E^g}
    \mbox{every edge in~} E_{\ell}^{g_\ell} \mbox{ is incident to some vertex in } \bigcup_{i\in [t]}B_i.
    \end{align}
    We define an equivalence relation $\sim$ among these automorphisms $g_1,\cdots,g_j$ by writing $g_u\sim g_v$ for some $u,v\in [j]$, if the following hold:
    \begin{itemize}
        \item[(a).] $E_u^{g_u}=E_v^{g_v}$ (as two edge sets incident to $\bigcup_{i\in [t]}B_i$);
        \item[(b).] The edges in $E_u^{g_u}$ appear in the same cyclic ordering along both longest cycles $X^{g_u}$ and $X^{g_v}$.
    \end{itemize}
    
    Now we calculate the number of equivalence classes under $\sim$.
    By \eqref{equ:E^g}, 
    every edge in $E_u^{g_u}$ has both endpoints in $B^*:=\bigcup_{i\in [-t+1,2t]}B_i$, where $B_{-i}:=B_{n/s-i}$ for $i\in [0,n/s-1]$.
    This implies $|E_u^{g_u}|\leq \binom{|B^*|}{2} \leq \binom{3st}{2}$ for every $u\in [j]$.
    We then consider the possible cyclic orderings for edges in $E_u^{g_u}$.
    As noted above, the edges in $E_u^{g_u}$ are decomposed into disjoint subpaths with length at least two, say $p$ such paths.
    Then $p\leq |B^*|/2=3st/2$.
    As each such path can be placed in 2 ways, 
    there are exactly $2^{p}(p-1)!<(2p)!\leq (3st)!$ possible cyclic orderings of $E_u^{g_u}$ in longest cycles  containing these edges.
    Therefore, the number of equivalence classes under $\sim$ is at most $$\sum\limits_{i=0}^{\binom{3st}{2}}\binom{\binom{3st}{2}}{i}\cdot (3st)!= 2^{\binom{3st}{2}}\cdot(3st)!\leq2^{\binom{3k/2}{2}}\cdot(3k/2)! <10^{k^2}.$$

    By the pigeonhole principle, there exist indices $1\leq u<v\leq 10^{k^2}\leq j$ such that $g_u\sim g_v$.
    Define the automorphism $g:=(g_v)^{-1}\circ g_u\in \mathrm{Aut}(G)$. Then we have 
    \[
    B_u^g=B_v \mbox{ \quad and \quad } E_u^g=E_v. 
    \]
    Since $j< n/s-10^{k^2}$, no edge connects $\bigcup_{i\in[u-1]}B_i$ and $\bigcup_{i\in[u+t,j]}B_i$.
    Hence, every edge in $E(X)\setminus E_u$ is entirely contained in either $G\big[\bigcup_{i\in[u-1]}B_i\big]$ or $G\big[\bigcup_{i\in[u+t,j]}B_i\big]$.
    These edges form several disjoint subpaths of $X$.
    Define $\mathcal{R}$ to be the collection of disjoint paths formed by $E(X)\setminus E_u$ within $G[\bigcup_{i\in[u+t,j]}B_i]$, and define $\mathcal{R}'$ to be the collection of disjoint paths formed by $E(X)\setminus E_v$ within $G[\bigcup_{i\in[v+t,j]}B_i]$;
    see Figure~\ref{fig:thm9} for an illustration.
Then we have $E(\mathcal{R}')\subseteq E(\mathcal{R})$.

    We claim that $E(\mathcal{R})\setminus E(\mathcal{R}')\neq \emptyset$.
    Select an edge $e\in E_u$ whose endpoints belong to $B_\alpha$ and $B_\beta$ respectively, such that $\min\{\alpha,\beta\}$ is maximized. 
    Then the edge $e^{g}\in E_u^{g}=E_v$, hence $e^{g}\notin E(\mathcal{R}')$.
    Note that $e^g$ is incident to blocks $B_{\alpha+v-u}$ and $B_{\beta+v-u}$. 
    We assert that $e^g \notin E_u$ and thus, $e^g \in E(\mathcal{R})$. 
    Indeed, if $e^g\in E_u$, then $\min\{\alpha+v-u, \beta+v-u\} > \min\{\alpha, \beta\}$ would contradict the maximality of $\min\{\alpha,\beta\}$ for our choice of $e$.
    We have demonstrated that $e^g \in E(\mathcal{R}) \setminus E(\mathcal{R}')$, and the claim follows.
    This implies $|E(\mathcal{R})| > |E(\mathcal{R}')|$.

It is clear that the endpoints of subpaths (of $X$) formed by $E_v$ is the same as that of $E(X)\setminus E_v$;\footnote{Note that these subpaths may be viewed in different cycles, but these endpoints are determined solely by the edge set $E_v$, and the same holds for other edge sets.}
similarly, the endpoints of subpaths (of $X$) formed by $E_u$ is the same as that of $E(X)\setminus E_u$, 
which says equivalently, the endpoints of subpaths (of $X^g$) formed by $E_v=E_u^g$ is the same as that of $\left(E(X)\setminus E_u\right)^g$.
Summarizing, the endpoints of subpaths formed by $E_v$, by $E(X)\setminus E_v$, and by $\left(E(X)\setminus E_u\right)^g$ are all the same.
Let $I$ denote the set of these endpoints that belong to $\bigcup_{i\in [v+t,j]}B_i$.

Recall the definitions of $\mathcal{R}$ and $\mathcal{R}'$. 
Let $\mathcal{R}^g$ be the image of $\mathcal{R}$ under the automorphism $g$.
Putting everything together, we can derive that 
the endpoints of the paths in $\mathcal{R}'$ are the same as those of $\mathcal{R}^g$, that is, the vertices of $I$.
Moreover, since $g_u\sim g_v$, 
these vertices of $I$ appear in the same cyclic order along both $X$ and $X^g$. 
Therefore, we can label the vertices as $I=\{x_1,x_2,\cdots,x_{2r}\}$ and write $\mathcal{R}'=\{P_1,\cdots,P_r\}$ and $\mathcal{R}^g=\{Q_1,\cdots,Q_r\}$, such that $P_i$ and $Q_i$ share the same endpoints $\{x_{2i-1},x_{2i}\}$ for every $i\in [r]$.

We view all $P_i\in \mathcal{R}'$ as subpaths of the cycle $X$ and all $Q_i\in \mathcal{R}^g$ as subpaths of the cycle $X^g$.
To apply Observation~\ref{obs:cycle merge},
it remains to verify that each $Q_i$ could only intersect $X$ at vertices of some $P_j$.
This follows since all vertices of $Q_i\in \mathcal{R}^g$ are entirely contained in $\bigcup_{j\in [v+t,j+v-u]}B_j$, whereas the vertices in $V(X)\setminus V(\mathcal{R}')$ must belong to $\bigcup_{j\in [v+t-1]}B_j$.
Since $j+(v-u) < n/s - 10^{k^2}+(v-u)< n/s$,
these two block intervals are disjoint, which completes the verification.
Finally, we can apply Observation~\ref{obs:cycle merge} to conclude that the edges in $E(X)\cup E(\mathcal{R}^g)\setminus E(\mathcal{R}')$ form a cycle in $G$ of length $|X|+|E(\mathcal{R})|-|E(\mathcal{R}')|>|X|$, which contradicts the maximality of $X$.
This proves \eqref{equ:key-ineq}, namely, $j>n/s-10^{k^2}$.
Hence, we have $|J|\geq j-k\geq n/s-10^{k^2}-k$.
    
Recall that $Q=\bigcup_{B\in J}B$. 
Since $|Q\setminus A|\leq \frac{k^3}{2}+k^2$, we obtain the following inequality
    $$\frac{n}{2}\geq |A|\geq |Q|-\left(\frac{k^3}{2}+k^2\right)=s|J|-\left(\frac{k^3}{2}+k^2\right)\geq s(n/s-10^{k^2}-k)-\left(\frac{k^3}{2}+k^2\right)>n-10^{k^2}\cdot k,$$
which gives $n < 2k\cdot 10^{k^2} < 3^{100k^2}$, contradicting \eqref{equ:n-lower}, and completes the proof of Theorem~\ref{thm:transitive babai}.
\end{proof}

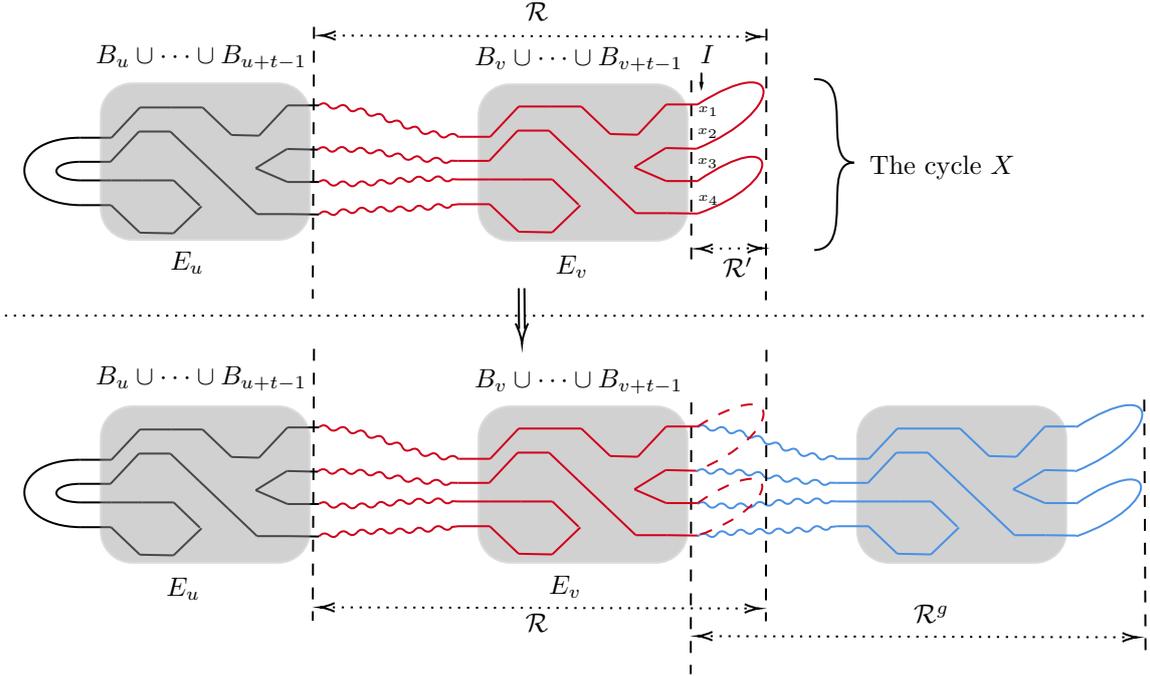
\begin{figure}
\captionsetup{justification=centering}
    \centering

\tikzset{every picture/.style={line width=0.75pt}} 
\begin{tikzpicture}[x=0.75pt,y=0.75pt,yscale=-1,xscale=1,scale = 0.95]

\draw    (93.14,117.32) -- (108.09,101.29) ;
\draw    (92.89,153.02) -- (108.09,167.6) ;
\draw    (92.64,130.07) -- (107.58,114.04) ;
\draw    (108.09,167.6) -- (125.73,167.95) ;
\draw    (109.36,139.91) -- (125.83,139.91) ;
\draw    (125.73,167.95) -- (140.5,153.63) ;
\draw    (125.83,139.91) -- (139.59,153.27) ;
\draw    (108.09,101.29) -- (124.56,101.29) ;
\draw    (107.58,114.04) -- (124.05,114.04) ;
\draw    (124.05,114.04) -- (139.25,128.62) ;
\draw    (124.56,101.29) -- (141.03,101.29) ;
\draw    (141.03,101.29) -- (156.23,115.86) ;
\draw    (139.25,128.62) -- (154.46,143.19) ;
\draw    (154.46,143.19) -- (169.66,157.76) ;
\draw    (156.23,115.86) -- (170.67,116.23) ;
\draw    (170.67,116.23) -- (185.62,100.2) ;
\draw    (185.62,100.2) -- (202.09,100.2) ;
\draw    (169.66,157.76) -- (186.13,157.76) ;
\draw    (186.13,157.76) -- (202.17,157.97) ;
\draw    (185.37,123.27) -- (201.5,123.64) ;
\draw    (185.62,140.76) -- (202.09,140.76) ;
\draw    (168.9,133.47) -- (185.37,123.27) ;
\draw    (168.9,133.47) -- (185.62,140.76) ;
\draw  [dash pattern={on 4.5pt off 4.5pt}]  (199.56,58.61) -- (198.88,202.57) ;
\draw [color={rgb, 255:red, 0; green, 0; blue, 0 }  ,draw opacity=1 ] [dash pattern={on 4.5pt off 4.5pt}]  (397.8,86.98) -- (397.8,185.02) ;
\draw    (310.12,196.98) -- (310.29,218.56)(307.12,197) -- (307.29,218.58) ;
\draw [shift={(308.85,226.57)}, rotate = 269.56] [color={rgb, 255:red, 0; green, 0; blue, 0 }  ][line width=0.75]    (10.93,-3.29) .. controls (6.95,-1.4) and (3.31,-0.3) .. (0,0) .. controls (3.31,0.3) and (6.95,1.4) .. (10.93,3.29)   ;
\draw [line width=0.75]  [dash pattern={on 0.84pt off 2.51pt}]  (205.21,63.21) -- (432,63.83) ;
\draw [shift={(434,63.83)}, rotate = 180.16] [color={rgb, 255:red, 0; green, 0; blue, 0 }  ][line width=0.75]    (7.65,-2.3) .. controls (4.86,-0.97) and (2.31,-0.21) .. (0,0) .. controls (2.31,0.21) and (4.86,0.98) .. (7.65,2.3)   ;
\draw [shift={(203.21,63.2)}, rotate = 0.16] [color={rgb, 255:red, 0; green, 0; blue, 0 }  ][line width=0.75]    (7.65,-2.3) .. controls (4.86,-0.97) and (2.31,-0.21) .. (0,0) .. controls (2.31,0.21) and (4.86,0.98) .. (7.65,2.3)   ;
\draw [line width=0.75]  [dash pattern={on 0.84pt off 2.51pt}]  (403.36,176.08) -- (433,175.85) ;
\draw [shift={(435,175.83)}, rotate = 179.55] [color={rgb, 255:red, 0; green, 0; blue, 0 }  ][line width=0.75]    (7.65,-2.3) .. controls (4.86,-0.97) and (2.31,-0.21) .. (0,0) .. controls (2.31,0.21) and (4.86,0.98) .. (7.65,2.3)   ;
\draw [shift={(401.36,176.1)}, rotate = 359.55] [color={rgb, 255:red, 0; green, 0; blue, 0 }  ][line width=0.75]    (7.65,-2.3) .. controls (4.86,-0.97) and (2.31,-0.21) .. (0,0) .. controls (2.31,0.21) and (4.86,0.98) .. (7.65,2.3)   ;
\draw  [dash pattern={on 4.5pt off 4.5pt}]  (437.29,59.62) -- (437,203.33) ;
\draw    (76.67,117.32) -- (93.14,117.32) ;
\draw    (76.17,130.07) -- (92.64,130.07) ;
\draw    (76.42,139.91) -- (92.89,139.91) ;
\draw    (76.42,153.02) -- (92.89,153.02) ;
\draw    (92.89,139.91) -- (109.36,139.91) ;
\draw  [color={rgb, 255:red, 155; green, 155; blue, 155 }  ,draw opacity=0.36 ][fill={rgb, 255:red, 155; green, 155; blue, 155 }  ,fill opacity=0.46 ] (286.06,105.63) .. controls (286.06,96.48) and (293.47,89.07) .. (302.61,89.07) -- (379.22,89.07) .. controls (388.36,89.07) and (395.77,96.48) .. (395.77,105.63) -- (395.77,155.28) .. controls (395.77,164.42) and (388.36,171.83) .. (379.22,171.83) -- (302.61,171.83) .. controls (293.47,171.83) and (286.06,164.42) .. (286.06,155.28) -- cycle ;
\draw [color={rgb, 255:red, 208; green, 2; blue, 27 }  ,draw opacity=1 ]   (292.14,116.82) -- (307.09,100.79) ;
\draw [color={rgb, 255:red, 208; green, 2; blue, 27 }  ,draw opacity=1 ]   (291.89,152.52) -- (307.09,167.1) ;
\draw [color={rgb, 255:red, 208; green, 2; blue, 27 }  ,draw opacity=1 ]   (291.64,129.57) -- (306.58,113.54) ;
\draw [color={rgb, 255:red, 208; green, 2; blue, 27 }  ,draw opacity=1 ]   (307.09,167.1) -- (324.73,167.45) ;
\draw [color={rgb, 255:red, 208; green, 2; blue, 27 }  ,draw opacity=1 ]   (308.36,139.41) -- (324.83,139.41) ;
\draw [color={rgb, 255:red, 208; green, 2; blue, 27 }  ,draw opacity=1 ]   (324.73,167.45) -- (339.5,153.13) ;
\draw [color={rgb, 255:red, 208; green, 2; blue, 27 }  ,draw opacity=1 ]   (324.83,139.41) -- (338.59,152.77) ;
\draw [color={rgb, 255:red, 208; green, 2; blue, 27 }  ,draw opacity=1 ]   (307.09,100.79) -- (323.56,100.79) ;
\draw [color={rgb, 255:red, 208; green, 2; blue, 27 }  ,draw opacity=1 ]   (306.58,113.54) -- (323.05,113.54) ;
\draw [color={rgb, 255:red, 208; green, 2; blue, 27 }  ,draw opacity=1 ]   (323.05,113.54) -- (338.25,128.12) ;
\draw [color={rgb, 255:red, 208; green, 2; blue, 27 }  ,draw opacity=1 ]   (323.56,100.79) -- (340.03,100.79) ;
\draw [color={rgb, 255:red, 208; green, 2; blue, 27 }  ,draw opacity=1 ]   (340.03,100.79) -- (355.23,115.36) ;
\draw [color={rgb, 255:red, 208; green, 2; blue, 27 }  ,draw opacity=1 ]   (338.25,128.12) -- (353.46,142.69) ;
\draw [color={rgb, 255:red, 208; green, 2; blue, 27 }  ,draw opacity=1 ]   (353.46,142.69) -- (368.66,157.26) ;
\draw [color={rgb, 255:red, 208; green, 2; blue, 27 }  ,draw opacity=1 ]   (355.23,115.36) -- (369.67,115.73) ;
\draw [color={rgb, 255:red, 208; green, 2; blue, 27 }  ,draw opacity=1 ]   (369.67,115.73) -- (384.62,99.7) ;
\draw [color={rgb, 255:red, 208; green, 2; blue, 27 }  ,draw opacity=1 ]   (384.62,99.7) -- (401.09,99.7) ;
\draw [color={rgb, 255:red, 208; green, 2; blue, 27 }  ,draw opacity=1 ]   (368.66,157.26) -- (385.13,157.26) ;
\draw [color={rgb, 255:red, 208; green, 2; blue, 27 }  ,draw opacity=1 ]   (385.13,157.26) -- (401.17,157.47) ;
\draw [color={rgb, 255:red, 208; green, 2; blue, 27 }  ,draw opacity=1 ]   (384.37,122.77) -- (400.5,123.14) ;
\draw [color={rgb, 255:red, 208; green, 2; blue, 27 }  ,draw opacity=1 ]   (384.62,140.26) -- (401.09,140.26) ;
\draw [color={rgb, 255:red, 208; green, 2; blue, 27 }  ,draw opacity=1 ]   (367.9,132.97) -- (384.37,122.77) ;
\draw [color={rgb, 255:red, 208; green, 2; blue, 27 }  ,draw opacity=1 ]   (367.9,132.97) -- (384.62,140.26) ;
\draw [color={rgb, 255:red, 208; green, 2; blue, 27 }  ,draw opacity=1 ]   (275.67,116.82) -- (292.14,116.82) ;
\draw [color={rgb, 255:red, 208; green, 2; blue, 27 }  ,draw opacity=1 ]   (275.17,129.57) -- (291.64,129.57) ;
\draw [color={rgb, 255:red, 208; green, 2; blue, 27 }  ,draw opacity=1 ]   (275.42,139.41) -- (291.89,139.41) ;
\draw [color={rgb, 255:red, 208; green, 2; blue, 27 }  ,draw opacity=1 ]   (275.42,152.52) -- (291.89,152.52) ;
\draw [color={rgb, 255:red, 208; green, 2; blue, 27 }  ,draw opacity=1 ]   (291.89,139.41) -- (308.36,139.41) ;
\draw [color={rgb, 255:red, 208; green, 2; blue, 27 }  ,draw opacity=1 ]   (202.09,100.2) .. controls (203.92,98.61) and (205.62,98.69) .. (207.2,100.46) .. controls (208.65,102.25) and (210.35,102.4) .. (212.32,100.92) .. controls (214.29,99.49) and (215.98,99.73) .. (217.4,101.63) .. controls (218.66,103.55) and (220.2,103.84) .. (222.01,102.51) .. controls (224.08,101.28) and (225.72,101.69) .. (226.91,103.73) .. controls (227.96,105.77) and (229.56,106.26) .. (231.73,105.2) .. controls (233.91,104.19) and (235.52,104.75) .. (236.55,106.9) .. controls (237.52,109.05) and (239.07,109.64) .. (241.22,108.67) .. controls (243.37,107.7) and (244.93,108.29) .. (245.92,110.44) .. controls (246.79,112.52) and (248.33,113.05) .. (250.54,112.04) .. controls (252.53,110.92) and (254.08,111.38) .. (255.18,113.43) .. controls (256.43,115.48) and (258.12,115.89) .. (260.25,114.67) .. controls (262.12,113.35) and (263.71,113.65) .. (265.04,115.58) .. controls (266.53,117.49) and (268.13,117.72) .. (269.86,116.27) .. controls (271.91,114.81) and (273.68,114.98) .. (275.17,116.78) -- (275.67,116.82) ;
\draw [color={rgb, 255:red, 208; green, 2; blue, 27 }  ,draw opacity=1 ]   (201.5,123.64) .. controls (203.48,121.91) and (205.25,121.87) .. (206.82,123.51) .. controls (208.29,125.16) and (209.87,125.14) .. (211.57,123.46) .. controls (213.51,121.81) and (215.21,121.83) .. (216.68,123.52) .. controls (218.29,125.23) and (219.89,125.29) .. (221.5,123.7) .. controls (223.29,122.15) and (225,122.27) .. (226.62,124.08) .. controls (228.03,125.91) and (229.66,126.1) .. (231.52,124.67) .. controls (233.51,123.3) and (235.18,123.58) .. (236.53,125.52) .. controls (237.8,127.46) and (239.41,127.77) .. (241.34,126.45) .. controls (243.32,125.13) and (244.97,125.43) .. (246.29,127.35) .. controls (247.8,129.28) and (249.52,129.53) .. (251.46,128.11) .. controls (253.26,126.64) and (254.82,126.81) .. (256.15,128.62) .. controls (257.92,130.45) and (259.61,130.58) .. (261.2,129.02) .. controls (263.17,127.47) and (264.96,127.57) .. (266.55,129.32) .. controls (267.92,131.04) and (269.57,131.1) .. (271.52,129.49) -- (275.17,129.57) ;
\draw [color={rgb, 255:red, 208; green, 2; blue, 27 }  ,draw opacity=1 ]   (202.09,140.76) .. controls (204.15,139.29) and (205.86,139.46) .. (207.22,141.26) .. controls (208.84,143.05) and (210.53,143.17) .. (212.29,141.6) .. controls (213.92,140.01) and (215.58,140.07) .. (217.26,141.78) .. controls (218.93,143.45) and (220.55,143.45) .. (222.12,141.76) .. controls (223.78,140.04) and (225.48,139.97) .. (227.21,141.55) .. controls (228.95,143.1) and (230.64,142.96) .. (232.27,141.15) .. controls (233.64,139.34) and (235.28,139.17) .. (237.19,140.65) .. controls (238.92,142.14) and (240.59,141.96) .. (242.2,140.11) .. controls (243.68,138.29) and (245.35,138.13) .. (247.2,139.64) .. controls (248.87,141.18) and (250.46,141.07) .. (251.96,139.31) .. controls (253.59,137.56) and (255.34,137.48) .. (257.2,139.08) .. controls (258.84,140.71) and (260.5,140.69) .. (262.17,139.02) .. controls (263.98,137.36) and (265.66,137.38) .. (267.2,139.08) .. controls (268.77,140.79) and (270.44,140.84) .. (272.2,139.25) -- (275.42,139.41) ;
\draw [color={rgb, 255:red, 208; green, 2; blue, 27 }  ,draw opacity=1 ]   (202.17,157.97) .. controls (203.78,156.07) and (205.55,155.84) .. (207.49,157.27) .. controls (209.32,158.72) and (210.92,158.53) .. (212.27,156.7) .. controls (213.85,154.85) and (215.55,154.68) .. (217.36,156.17) .. controls (219.05,157.69) and (220.66,157.55) .. (222.21,155.75) .. controls (223.68,153.98) and (225.31,153.87) .. (227.11,155.44) .. controls (228.96,157.02) and (230.6,156.95) .. (232.03,155.24) .. controls (233.8,153.53) and (235.48,153.5) .. (237.09,155.14) .. controls (238.86,156.78) and (240.53,156.76) .. (242.11,155.07) .. controls (243.72,153.36) and (245.43,153.32) .. (247.22,154.94) .. controls (248.95,156.53) and (250.65,156.45) .. (252.34,154.7) .. controls (253.89,152.93) and (255.47,152.83) .. (257.1,154.39) .. controls (258.85,155.92) and (260.52,155.78) .. (262.09,153.97) .. controls (263.76,152.14) and (265.49,151.97) .. (267.27,153.46) .. controls (269.17,154.93) and (270.75,154.75) .. (272,152.93) -- (275.42,152.52) ;
\draw    (76.42,153.02) .. controls (38.67,152.33) and (36.67,117.33) .. (76.67,117.32) ;
\draw    (76.17,130.07) .. controls (60.67,130.33) and (59.67,139.33) .. (76.42,139.91) ;
\draw [color={rgb, 255:red, 208; green, 2; blue, 27 }  ,draw opacity=1 ]   (401.09,99.7) .. controls (447.67,70) and (447.17,104) .. (400.5,123.14) ;
\draw [color={rgb, 255:red, 208; green, 2; blue, 27 }  ,draw opacity=1 ]   (401.09,140.26) .. controls (445.67,109.5) and (447.17,141.5) .. (401.17,157.47) ;
\draw    (93.14,287.82) -- (108.09,271.79) ;
\draw    (92.89,323.52) -- (108.09,338.1) ;
\draw    (92.64,300.57) -- (107.58,284.54) ;
\draw    (108.09,338.1) -- (125.73,338.45) ;
\draw    (109.36,310.41) -- (125.83,310.41) ;
\draw    (125.73,338.45) -- (140.5,324.13) ;
\draw    (125.83,310.41) -- (139.59,323.77) ;
\draw    (108.09,271.79) -- (124.56,271.79) ;
\draw    (107.58,284.54) -- (124.05,284.54) ;
\draw    (124.05,284.54) -- (139.25,299.12) ;
\draw    (124.56,271.79) -- (141.03,271.79) ;
\draw    (141.03,271.79) -- (156.23,286.36) ;
\draw    (139.25,299.12) -- (154.46,313.69) ;
\draw    (154.46,313.69) -- (169.66,328.26) ;
\draw    (156.23,286.36) -- (170.67,286.73) ;
\draw    (170.67,286.73) -- (185.62,270.7) ;
\draw    (185.62,270.7) -- (202.09,270.7) ;
\draw    (169.66,328.26) -- (186.13,328.26) ;
\draw    (186.13,328.26) -- (202.17,328.47) ;
\draw    (185.37,293.77) -- (201.5,294.14) ;
\draw    (185.62,311.26) -- (202.09,311.26) ;
\draw    (168.9,303.97) -- (185.37,293.77) ;
\draw    (168.9,303.97) -- (185.62,311.26) ;
\draw  [dash pattern={on 4.5pt off 4.5pt}]  (199.56,229.11) -- (198.88,373.07) ;
\draw [color={rgb, 255:red, 0; green, 0; blue, 0 }  ,draw opacity=1 ] [dash pattern={on 4.5pt off 4.5pt}]  (397.8,257.48) -- (397.5,401.33) ;
\draw    (76.67,287.82) -- (93.14,287.82) ;
\draw    (76.17,300.57) -- (92.64,300.57) ;
\draw    (76.42,310.41) -- (92.89,310.41) ;
\draw    (76.42,323.52) -- (92.89,323.52) ;
\draw    (92.89,310.41) -- (109.36,310.41) ;
\draw  [color={rgb, 255:red, 155; green, 155; blue, 155 }  ,draw opacity=0.36 ][fill={rgb, 255:red, 155; green, 155; blue, 155 }  ,fill opacity=0.46 ] (286.06,276.13) .. controls (286.06,266.98) and (293.47,259.57) .. (302.61,259.57) -- (379.22,259.57) .. controls (388.36,259.57) and (395.77,266.98) .. (395.77,276.13) -- (395.77,325.78) .. controls (395.77,334.92) and (388.36,342.33) .. (379.22,342.33) -- (302.61,342.33) .. controls (293.47,342.33) and (286.06,334.92) .. (286.06,325.78) -- cycle ;
\draw [color={rgb, 255:red, 208; green, 2; blue, 27 }  ,draw opacity=1 ]   (292.14,287.32) -- (307.09,271.29) ;
\draw [color={rgb, 255:red, 208; green, 2; blue, 27 }  ,draw opacity=1 ]   (291.89,323.02) -- (307.09,337.6) ;
\draw [color={rgb, 255:red, 208; green, 2; blue, 27 }  ,draw opacity=1 ]   (291.64,300.07) -- (306.58,284.04) ;
\draw [color={rgb, 255:red, 208; green, 2; blue, 27 }  ,draw opacity=1 ]   (307.09,337.6) -- (324.73,337.95) ;
\draw [color={rgb, 255:red, 208; green, 2; blue, 27 }  ,draw opacity=1 ]   (308.36,309.91) -- (324.83,309.91) ;
\draw [color={rgb, 255:red, 208; green, 2; blue, 27 }  ,draw opacity=1 ]   (324.73,337.95) -- (339.5,323.63) ;
\draw [color={rgb, 255:red, 208; green, 2; blue, 27 }  ,draw opacity=1 ]   (324.83,309.91) -- (338.59,323.27) ;
\draw [color={rgb, 255:red, 208; green, 2; blue, 27 }  ,draw opacity=1 ]   (307.09,271.29) -- (323.56,271.29) ;
\draw [color={rgb, 255:red, 208; green, 2; blue, 27 }  ,draw opacity=1 ]   (306.58,284.04) -- (323.05,284.04) ;
\draw [color={rgb, 255:red, 208; green, 2; blue, 27 }  ,draw opacity=1 ]   (323.05,284.04) -- (338.25,298.62) ;
\draw [color={rgb, 255:red, 208; green, 2; blue, 27 }  ,draw opacity=1 ]   (323.56,271.29) -- (340.03,271.29) ;
\draw [color={rgb, 255:red, 208; green, 2; blue, 27 }  ,draw opacity=1 ]   (340.03,271.29) -- (355.23,285.86) ;
\draw [color={rgb, 255:red, 208; green, 2; blue, 27 }  ,draw opacity=1 ]   (338.25,298.62) -- (353.46,313.19) ;
\draw [color={rgb, 255:red, 208; green, 2; blue, 27 }  ,draw opacity=1 ]   (353.46,313.19) -- (368.66,327.76) ;
\draw [color={rgb, 255:red, 208; green, 2; blue, 27 }  ,draw opacity=1 ]   (355.23,285.86) -- (369.67,286.23) ;
\draw [color={rgb, 255:red, 208; green, 2; blue, 27 }  ,draw opacity=1 ]   (369.67,286.23) -- (384.62,270.2) ;
\draw [color={rgb, 255:red, 208; green, 2; blue, 27 }  ,draw opacity=1 ]   (384.62,270.2) -- (401.09,270.2) ;
\draw [color={rgb, 255:red, 208; green, 2; blue, 27 }  ,draw opacity=1 ]   (368.66,327.76) -- (385.13,327.76) ;
\draw [color={rgb, 255:red, 208; green, 2; blue, 27 }  ,draw opacity=1 ]   (385.13,327.76) -- (401.17,327.97) ;
\draw [color={rgb, 255:red, 208; green, 2; blue, 27 }  ,draw opacity=1 ]   (384.37,293.27) -- (400.5,293.64) ;
\draw [color={rgb, 255:red, 208; green, 2; blue, 27 }  ,draw opacity=1 ]   (384.62,310.76) -- (401.09,310.76) ;
\draw [color={rgb, 255:red, 208; green, 2; blue, 27 }  ,draw opacity=1 ]   (367.9,303.47) -- (384.37,293.27) ;
\draw [color={rgb, 255:red, 208; green, 2; blue, 27 }  ,draw opacity=1 ]   (367.9,303.47) -- (384.62,310.76) ;
\draw [color={rgb, 255:red, 208; green, 2; blue, 27 }  ,draw opacity=1 ]   (275.67,287.32) -- (292.14,287.32) ;
\draw [color={rgb, 255:red, 208; green, 2; blue, 27 }  ,draw opacity=1 ]   (275.17,300.07) -- (291.64,300.07) ;
\draw [color={rgb, 255:red, 208; green, 2; blue, 27 }  ,draw opacity=1 ]   (275.42,309.91) -- (291.89,309.91) ;
\draw [color={rgb, 255:red, 208; green, 2; blue, 27 }  ,draw opacity=1 ]   (275.42,323.02) -- (291.89,323.02) ;
\draw [color={rgb, 255:red, 208; green, 2; blue, 27 }  ,draw opacity=1 ]   (291.89,309.91) -- (308.36,309.91) ;
\draw [color={rgb, 255:red, 208; green, 2; blue, 27 }  ,draw opacity=1 ]   (202.09,270.7) .. controls (203.92,269.11) and (205.62,269.19) .. (207.2,270.96) .. controls (208.65,272.75) and (210.35,272.9) .. (212.32,271.42) .. controls (214.29,269.99) and (215.98,270.23) .. (217.4,272.13) .. controls (218.66,274.05) and (220.2,274.34) .. (222.01,273.01) .. controls (224.08,271.78) and (225.72,272.19) .. (226.91,274.23) .. controls (227.96,276.27) and (229.56,276.76) .. (231.73,275.7) .. controls (233.91,274.69) and (235.52,275.25) .. (236.55,277.4) .. controls (237.52,279.55) and (239.07,280.14) .. (241.22,279.17) .. controls (243.37,278.2) and (244.93,278.79) .. (245.92,280.94) .. controls (246.79,283.02) and (248.33,283.55) .. (250.54,282.54) .. controls (252.53,281.42) and (254.08,281.88) .. (255.18,283.93) .. controls (256.43,285.98) and (258.12,286.39) .. (260.25,285.17) .. controls (262.12,283.85) and (263.71,284.15) .. (265.04,286.08) .. controls (266.53,287.99) and (268.13,288.22) .. (269.86,286.77) .. controls (271.91,285.31) and (273.68,285.48) .. (275.17,287.28) -- (275.67,287.32) ;
\draw [color={rgb, 255:red, 208; green, 2; blue, 27 }  ,draw opacity=1 ]   (201.5,294.14) .. controls (203.48,292.41) and (205.25,292.37) .. (206.82,294.01) .. controls (208.29,295.66) and (209.87,295.64) .. (211.57,293.96) .. controls (213.51,292.31) and (215.21,292.33) .. (216.68,294.02) .. controls (218.29,295.73) and (219.89,295.79) .. (221.5,294.2) .. controls (223.29,292.65) and (225,292.77) .. (226.62,294.58) .. controls (228.03,296.41) and (229.66,296.6) .. (231.52,295.17) .. controls (233.51,293.8) and (235.18,294.08) .. (236.53,296.02) .. controls (237.8,297.96) and (239.41,298.27) .. (241.34,296.95) .. controls (243.32,295.63) and (244.97,295.93) .. (246.29,297.85) .. controls (247.8,299.78) and (249.52,300.03) .. (251.46,298.61) .. controls (253.26,297.14) and (254.82,297.31) .. (256.15,299.12) .. controls (257.92,300.95) and (259.61,301.08) .. (261.2,299.52) .. controls (263.17,297.97) and (264.96,298.07) .. (266.55,299.82) .. controls (267.92,301.54) and (269.57,301.6) .. (271.52,299.99) -- (275.17,300.07) ;
\draw [color={rgb, 255:red, 208; green, 2; blue, 27 }  ,draw opacity=1 ]   (202.09,311.26) .. controls (204.15,309.79) and (205.86,309.96) .. (207.22,311.76) .. controls (208.84,313.55) and (210.53,313.67) .. (212.29,312.1) .. controls (213.92,310.51) and (215.58,310.57) .. (217.26,312.28) .. controls (218.93,313.95) and (220.55,313.95) .. (222.12,312.26) .. controls (223.78,310.54) and (225.48,310.47) .. (227.21,312.05) .. controls (228.95,313.6) and (230.64,313.46) .. (232.27,311.65) .. controls (233.64,309.84) and (235.28,309.67) .. (237.19,311.15) .. controls (238.92,312.64) and (240.59,312.46) .. (242.2,310.61) .. controls (243.68,308.79) and (245.35,308.63) .. (247.2,310.14) .. controls (248.87,311.68) and (250.46,311.57) .. (251.96,309.81) .. controls (253.59,308.06) and (255.34,307.98) .. (257.2,309.58) .. controls (258.84,311.21) and (260.5,311.19) .. (262.17,309.52) .. controls (263.98,307.86) and (265.66,307.88) .. (267.2,309.58) .. controls (268.77,311.29) and (270.44,311.34) .. (272.2,309.75) -- (275.42,309.91) ;
\draw [color={rgb, 255:red, 208; green, 2; blue, 27 }  ,draw opacity=1 ]   (202.17,328.47) .. controls (203.78,326.57) and (205.55,326.34) .. (207.49,327.77) .. controls (209.32,329.22) and (210.92,329.03) .. (212.27,327.2) .. controls (213.85,325.35) and (215.55,325.18) .. (217.36,326.67) .. controls (219.05,328.19) and (220.66,328.05) .. (222.21,326.25) .. controls (223.68,324.48) and (225.31,324.37) .. (227.11,325.94) .. controls (228.96,327.52) and (230.6,327.45) .. (232.03,325.74) .. controls (233.8,324.03) and (235.48,324) .. (237.09,325.64) .. controls (238.86,327.28) and (240.53,327.26) .. (242.11,325.57) .. controls (243.72,323.86) and (245.43,323.82) .. (247.22,325.44) .. controls (248.95,327.03) and (250.65,326.95) .. (252.34,325.2) .. controls (253.89,323.43) and (255.47,323.33) .. (257.1,324.89) .. controls (258.85,326.42) and (260.52,326.28) .. (262.09,324.47) .. controls (263.76,322.64) and (265.49,322.47) .. (267.27,323.96) .. controls (269.17,325.43) and (270.75,325.25) .. (272,323.43) -- (275.42,323.02) ;
\draw    (76.42,323.52) .. controls (38.67,322.83) and (36.67,287.83) .. (76.67,287.82) ;
\draw    (76.17,300.57) .. controls (60.67,300.83) and (59.67,309.83) .. (76.42,310.41) ;
\draw [color={rgb, 255:red, 0; green, 0; blue, 0 }  ,draw opacity=1 ] [dash pattern={on 4.5pt off 4.5pt}]  (635.8,255.98) -- (636.5,394.83) ;
\draw  [color={rgb, 255:red, 155; green, 155; blue, 155 }  ,draw opacity=0.39 ][fill={rgb, 255:red, 155; green, 155; blue, 155 }  ,fill opacity=0.46 ] (485.06,276.13) .. controls (485.06,266.98) and (492.47,259.57) .. (501.61,259.57) -- (578.22,259.57) .. controls (587.36,259.57) and (594.77,266.98) .. (594.77,276.13) -- (594.77,325.78) .. controls (594.77,334.92) and (587.36,342.33) .. (578.22,342.33) -- (501.61,342.33) .. controls (492.47,342.33) and (485.06,334.92) .. (485.06,325.78) -- cycle ;
\draw [color={rgb, 255:red, 74; green, 144; blue, 226 }  ,draw opacity=1 ]   (491.14,287.32) -- (506.09,271.29) ;
\draw [color={rgb, 255:red, 74; green, 144; blue, 226 }  ,draw opacity=1 ]   (490.89,323.02) -- (506.09,337.6) ;
\draw [color={rgb, 255:red, 74; green, 144; blue, 226 }  ,draw opacity=1 ]   (490.64,300.07) -- (505.58,284.04) ;
\draw [color={rgb, 255:red, 74; green, 144; blue, 226 }  ,draw opacity=1 ]   (506.09,337.6) -- (523.73,337.95) ;
\draw [color={rgb, 255:red, 74; green, 144; blue, 226 }  ,draw opacity=1 ]   (507.36,309.91) -- (523.83,309.91) ;
\draw [color={rgb, 255:red, 74; green, 144; blue, 226 }  ,draw opacity=1 ]   (523.73,337.95) -- (538.5,323.63) ;
\draw [color={rgb, 255:red, 74; green, 144; blue, 226 }  ,draw opacity=1 ]   (523.83,309.91) -- (537.59,323.27) ;
\draw [color={rgb, 255:red, 74; green, 144; blue, 226 }  ,draw opacity=1 ]   (506.09,271.29) -- (522.56,271.29) ;
\draw [color={rgb, 255:red, 74; green, 144; blue, 226 }  ,draw opacity=1 ]   (505.58,284.04) -- (522.05,284.04) ;
\draw [color={rgb, 255:red, 74; green, 144; blue, 226 }  ,draw opacity=1 ]   (522.05,284.04) -- (537.25,298.62) ;
\draw [color={rgb, 255:red, 74; green, 144; blue, 226 }  ,draw opacity=1 ]   (522.56,271.29) -- (539.03,271.29) ;
\draw [color={rgb, 255:red, 74; green, 144; blue, 226 }  ,draw opacity=1 ]   (539.03,271.29) -- (554.23,285.86) ;
\draw [color={rgb, 255:red, 74; green, 144; blue, 226 }  ,draw opacity=1 ]   (537.25,298.62) -- (552.46,313.19) ;
\draw [color={rgb, 255:red, 74; green, 144; blue, 226 }  ,draw opacity=1 ]   (552.46,313.19) -- (567.66,327.76) ;
\draw [color={rgb, 255:red, 74; green, 144; blue, 226 }  ,draw opacity=1 ]   (554.23,285.86) -- (568.67,286.23) ;
\draw [color={rgb, 255:red, 74; green, 144; blue, 226 }  ,draw opacity=1 ]   (568.67,286.23) -- (583.62,270.2) ;
\draw [color={rgb, 255:red, 74; green, 144; blue, 226 }  ,draw opacity=1 ]   (583.62,270.2) -- (600.09,270.2) ;
\draw [color={rgb, 255:red, 74; green, 144; blue, 226 }  ,draw opacity=1 ]   (567.66,327.76) -- (584.13,327.76) ;
\draw [color={rgb, 255:red, 74; green, 144; blue, 226 }  ,draw opacity=1 ]   (584.13,327.76) -- (600.17,327.97) ;
\draw [color={rgb, 255:red, 74; green, 144; blue, 226 }  ,draw opacity=1 ]   (583.37,293.27) -- (599.5,293.64) ;
\draw [color={rgb, 255:red, 74; green, 144; blue, 226 }  ,draw opacity=1 ]   (583.62,310.76) -- (600.09,310.76) ;
\draw [color={rgb, 255:red, 74; green, 144; blue, 226 }  ,draw opacity=1 ]   (566.9,303.47) -- (583.37,293.27) ;
\draw [color={rgb, 255:red, 74; green, 144; blue, 226 }  ,draw opacity=1 ]   (566.9,303.47) -- (583.62,310.76) ;
\draw [color={rgb, 255:red, 74; green, 144; blue, 226 }  ,draw opacity=1 ]   (474.67,287.32) -- (491.14,287.32) ;
\draw [color={rgb, 255:red, 74; green, 144; blue, 226 }  ,draw opacity=1 ]   (474.17,300.07) -- (490.64,300.07) ;
\draw [color={rgb, 255:red, 74; green, 144; blue, 226 }  ,draw opacity=1 ]   (474.42,309.91) -- (490.89,309.91) ;
\draw [color={rgb, 255:red, 74; green, 144; blue, 226 }  ,draw opacity=1 ]   (474.42,323.02) -- (490.89,323.02) ;
\draw [color={rgb, 255:red, 74; green, 144; blue, 226 }  ,draw opacity=1 ]   (490.89,309.91) -- (507.36,309.91) ;
\draw [color={rgb, 255:red, 74; green, 144; blue, 226 }  ,draw opacity=1 ]   (401.09,270.7) .. controls (402.92,269.11) and (404.62,269.19) .. (406.2,270.96) .. controls (407.65,272.75) and (409.35,272.9) .. (411.32,271.42) .. controls (413.29,269.99) and (414.98,270.23) .. (416.4,272.13) .. controls (417.66,274.05) and (419.2,274.34) .. (421.01,273.01) .. controls (423.08,271.78) and (424.72,272.19) .. (425.91,274.23) .. controls (426.96,276.27) and (428.56,276.76) .. (430.73,275.7) .. controls (432.91,274.69) and (434.52,275.25) .. (435.55,277.4) .. controls (436.52,279.55) and (438.07,280.14) .. (440.22,279.17) .. controls (442.37,278.2) and (443.93,278.79) .. (444.92,280.94) .. controls (445.79,283.02) and (447.33,283.55) .. (449.54,282.54) .. controls (451.53,281.42) and (453.08,281.88) .. (454.18,283.93) .. controls (455.43,285.98) and (457.12,286.39) .. (459.25,285.17) .. controls (461.12,283.85) and (462.71,284.15) .. (464.04,286.08) .. controls (465.53,287.99) and (467.13,288.22) .. (468.86,286.77) .. controls (470.91,285.31) and (472.68,285.48) .. (474.17,287.28) -- (474.67,287.32) ;
\draw [color={rgb, 255:red, 74; green, 144; blue, 226 }  ,draw opacity=1 ]   (400.5,294.14) .. controls (402.48,292.41) and (404.25,292.37) .. (405.82,294.01) .. controls (407.29,295.66) and (408.87,295.64) .. (410.57,293.96) .. controls (412.51,292.31) and (414.21,292.33) .. (415.68,294.02) .. controls (417.29,295.73) and (418.89,295.79) .. (420.5,294.2) .. controls (422.29,292.65) and (424,292.77) .. (425.62,294.58) .. controls (427.03,296.41) and (428.66,296.6) .. (430.52,295.17) .. controls (432.51,293.8) and (434.18,294.08) .. (435.53,296.02) .. controls (436.8,297.96) and (438.41,298.27) .. (440.34,296.95) .. controls (442.32,295.63) and (443.97,295.93) .. (445.29,297.85) .. controls (446.8,299.78) and (448.52,300.03) .. (450.46,298.61) .. controls (452.26,297.14) and (453.82,297.31) .. (455.15,299.12) .. controls (456.92,300.95) and (458.61,301.08) .. (460.2,299.52) .. controls (462.17,297.97) and (463.96,298.07) .. (465.55,299.82) .. controls (466.92,301.54) and (468.57,301.6) .. (470.52,299.99) -- (474.17,300.07) ;
\draw [color={rgb, 255:red, 74; green, 144; blue, 226 }  ,draw opacity=1 ]   (401.09,311.26) .. controls (403.15,309.79) and (404.86,309.96) .. (406.22,311.76) .. controls (407.84,313.55) and (409.53,313.67) .. (411.29,312.1) .. controls (412.92,310.51) and (414.58,310.57) .. (416.26,312.28) .. controls (417.93,313.95) and (419.55,313.95) .. (421.12,312.26) .. controls (422.78,310.54) and (424.48,310.47) .. (426.21,312.05) .. controls (427.95,313.6) and (429.64,313.46) .. (431.27,311.65) .. controls (432.64,309.84) and (434.28,309.67) .. (436.19,311.15) .. controls (437.92,312.64) and (439.59,312.46) .. (441.2,310.61) .. controls (442.68,308.79) and (444.35,308.63) .. (446.2,310.14) .. controls (447.87,311.68) and (449.46,311.57) .. (450.96,309.81) .. controls (452.59,308.06) and (454.34,307.98) .. (456.2,309.58) .. controls (457.84,311.21) and (459.5,311.19) .. (461.17,309.52) .. controls (462.98,307.86) and (464.66,307.88) .. (466.2,309.58) .. controls (467.77,311.29) and (469.44,311.34) .. (471.2,309.75) -- (474.42,309.91) ;
\draw [color={rgb, 255:red, 74; green, 144; blue, 226 }  ,draw opacity=1 ]   (401.17,328.47) .. controls (402.78,326.57) and (404.55,326.34) .. (406.49,327.77) .. controls (408.32,329.22) and (409.92,329.03) .. (411.27,327.2) .. controls (412.85,325.35) and (414.55,325.18) .. (416.36,326.67) .. controls (418.05,328.19) and (419.66,328.05) .. (421.21,326.25) .. controls (422.68,324.48) and (424.31,324.37) .. (426.11,325.94) .. controls (427.96,327.52) and (429.6,327.45) .. (431.03,325.74) .. controls (432.8,324.03) and (434.48,324) .. (436.09,325.64) .. controls (437.86,327.28) and (439.53,327.26) .. (441.11,325.57) .. controls (442.72,323.86) and (444.43,323.82) .. (446.22,325.44) .. controls (447.95,327.03) and (449.65,326.95) .. (451.34,325.2) .. controls (452.89,323.43) and (454.47,323.33) .. (456.1,324.89) .. controls (457.85,326.42) and (459.52,326.28) .. (461.09,324.47) .. controls (462.76,322.64) and (464.49,322.47) .. (466.27,323.96) .. controls (468.17,325.43) and (469.75,325.25) .. (471,323.43) -- (474.42,323.02) ;
\draw  [dash pattern={on 4.5pt off 4.5pt}]  (437.29,229.62) -- (437,373.33) ;
\draw [line width=0.75]  [dash pattern={on 0.84pt off 2.51pt}]  (403.21,381.21) -- (630,381.83) ;
\draw [shift={(632,381.83)}, rotate = 180.16] [color={rgb, 255:red, 0; green, 0; blue, 0 }  ][line width=0.75]    (7.65,-2.3) .. controls (4.86,-0.97) and (2.31,-0.21) .. (0,0) .. controls (2.31,0.21) and (4.86,0.98) .. (7.65,2.3)   ;
\draw [shift={(401.21,381.2)}, rotate = 0.16] [color={rgb, 255:red, 0; green, 0; blue, 0 }  ][line width=0.75]    (7.65,-2.3) .. controls (4.86,-0.97) and (2.31,-0.21) .. (0,0) .. controls (2.31,0.21) and (4.86,0.98) .. (7.65,2.3)   ;
\draw [line width=0.75]    (403.11,82.89) -- (403.11,90.56) ;
\draw [shift={(403.11,92.56)}, rotate = 270] [fill={rgb, 255:red, 0; green, 0; blue, 0 }  ][line width=0.08]  [draw opacity=0] (4.8,-1.2) -- (0,0) -- (4.8,1.2) -- cycle    ;
\draw  [dash pattern={on 0.84pt off 2.51pt}]  (37.17,211.5) -- (641,211.67) ;
\draw  [color={rgb, 255:red, 155; green, 155; blue, 155 }  ,draw opacity=0.36 ][fill={rgb, 255:red, 155; green, 155; blue, 155 }  ,fill opacity=0.46 ] (87.56,105.13) .. controls (87.56,95.98) and (94.97,88.57) .. (104.11,88.57) -- (180.72,88.57) .. controls (189.86,88.57) and (197.27,95.98) .. (197.27,105.13) -- (197.27,154.78) .. controls (197.27,163.92) and (189.86,171.33) .. (180.72,171.33) -- (104.11,171.33) .. controls (94.97,171.33) and (87.56,163.92) .. (87.56,154.78) -- cycle ;
\draw  [color={rgb, 255:red, 155; green, 155; blue, 155 }  ,draw opacity=0.36 ][fill={rgb, 255:red, 155; green, 155; blue, 155 }  ,fill opacity=0.46 ] (87.56,276.13) .. controls (87.56,266.98) and (94.97,259.57) .. (104.11,259.57) -- (180.72,259.57) .. controls (189.86,259.57) and (197.27,266.98) .. (197.27,276.13) -- (197.27,325.78) .. controls (197.27,334.92) and (189.86,342.33) .. (180.72,342.33) -- (104.11,342.33) .. controls (94.97,342.33) and (87.56,334.92) .. (87.56,325.78) -- cycle ;
\draw [color={rgb, 255:red, 208; green, 2; blue, 27 }  ,draw opacity=1 ] [dash pattern={on 4.5pt off 4.5pt}]  (401.09,270.2) .. controls (447.67,240.5) and (447.17,274.5) .. (400.5,293.64) ;
\draw [color={rgb, 255:red, 208; green, 2; blue, 27 }  ,draw opacity=1 ] [dash pattern={on 4.5pt off 4.5pt}]  (401.09,310.76) .. controls (445.67,280) and (447.17,312) .. (401.17,327.97) ;
\draw [color={rgb, 255:red, 74; green, 144; blue, 226 }  ,draw opacity=1 ]   (600.09,270.56) .. controls (646.67,240.86) and (646.17,274.86) .. (599.5,293.99) ;
\draw [color={rgb, 255:red, 74; green, 144; blue, 226 }  ,draw opacity=1 ]   (600.09,311.11) .. controls (644.67,280.36) and (646.17,312.36) .. (600.17,328.33) ;

\draw [line width=0.75]  [dash pattern={on 0.84pt off 2.51pt}]  (204.71,365.96) -- (431.5,366.58) ;
\draw [shift={(433.5,366.58)}, rotate = 180.16] [color={rgb, 255:red, 0; green, 0; blue, 0 }  ][line width=0.75]    (7.65,-2.3) .. controls (4.86,-0.97) and (2.31,-0.21) .. (0,0) .. controls (2.31,0.21) and (4.86,0.98) .. (7.65,2.3)   ;
\draw [shift={(202.71,365.95)}, rotate = 0.16] [color={rgb, 255:red, 0; green, 0; blue, 0 }  ][line width=0.75]    (7.65,-2.3) .. controls (4.86,-0.97) and (2.31,-0.21) .. (0,0) .. controls (2.31,0.21) and (4.86,0.98) .. (7.65,2.3)   ;

\draw (309.21,367.75) node [anchor=north west][inner sep=0.75pt]   [align=left] {{\fontfamily{ptm}\selectfont $\mathcal{R}$}};

\draw (123.31,66.45) node [anchor=north west][inner sep=0.75pt]   [align=left,xshift = -10mm] {{\fontfamily{ptm}\selectfont $B_u\cup \cdots\cup B_{u+t-1}$}};
\draw (309.21,44) node [anchor=north west][inner sep=0.75pt]   [align=left] {{\fontfamily{ptm}\selectfont $\mathcal{R}$}};
\draw (412.77,178.85) node [anchor=north west][inner sep=0.75pt]   [align=left] {{\fontfamily{ptm}\selectfont $\mathcal{R}'$}};
\draw (401.02,66.73) node [anchor=north west][inner sep=0.75pt]   [align=left] {$I$};
\draw (322.31,65.95) node [anchor=north west][inner sep=0.75pt]   [align=left,xshift = -10mm,yshift = -0.5mm] {{\fontfamily{ptm}\selectfont $B_v\cup\cdots\cup B_{v+t-1}$}};
\draw (123.31,236.95) node [anchor=north west][inner sep=0.75pt]   [align=left,xshift = -10mm] {{\fontfamily{ptm}\selectfont $B_u\cup\cdots\cup B_{u+t-1}$}};
\draw (322.31,236.45) node [anchor=north west][inner sep=0.75pt]   [align=left,,xshift = -10mm,yshift = -0.5mm] {{\fontfamily{ptm}\selectfont $B_v\cup\cdots\cup B_{v+t-1}$}};
\draw (521.31,236.45) node [anchor=north west][inner sep=0.75pt]   [align=left] {};
\draw (513.21,363.17) node [anchor=north west][inner sep=0.75pt]   [align=left] {{\fontfamily{ptm}\selectfont $\mathcal{R}^g$}};
\draw (399.78,97) node [anchor=north west][inner sep=0.75pt]   [align=left,yshift = -0.5mm] {{\fontfamily{ptm}\fontsize{5}{6}\selectfont $x_1$}};
\draw (399.44,116.67) node [anchor=north west][inner sep=0.75pt]   [align=left,yshift = 1.5mm] {{\fontfamily{ptm}\fontsize{5}{6}\selectfont $x_2$}};
\draw (399.44,134.67) node [anchor=north west][inner sep=0.75pt]   [align=left,yshift = 2mm] {{\fontfamily{ptm}\fontsize{5}{6}\selectfont $x_3$}};
\draw (399.78,152) node [anchor=north west][inner sep=0.75pt]   [align=left,yshift = 1.5mm] {{\fontfamily{ptm}\fontsize{5}{6}\selectfont $x_4$}};
\draw (122.5,176) node [anchor=north west][inner sep=0.75pt]   [align=left] {$E_u$};
\draw (325,178) node [anchor=north west][inner sep=0.75pt]   [align=left] {$E_v$};
\draw (120.5,348.83) node [anchor=north west][inner sep=0.75pt]   [align=left] {$E_u$};
\draw (321.5,347.83) node [anchor=north west][inner sep=0.75pt]   [align=left] {$E_v$};

\draw    (462.3,86.33) .. controls (484.33,85.03) and (465.33,129.53) .. (483.33,130.53) ;
\draw    (483.33,130.53) .. controls (465.33,130.03) and (484.45,176.29) .. (462.45,176.79) ;

\draw (490,125.33) node [anchor=north west][inner sep=0.75pt]   [align=left] {The cycle $X$};
\end{tikzpicture}
    \caption{Finding a longer cycle than the given cycle $X$ in vertex-transitive graphs $G$.}
    \label{fig:thm9}
\end{figure}

To conclude this section, we derive Corollary~\ref{cor:vert-tran} from Theorem~\ref{thm:transitive babai}.

\begin{proof}[\bf Proof of Corollary~\ref{cor:vert-tran}.]
    By Corollary~\ref{cor:smith} and Theorem~\ref{thm:transitive babai}, every two longest cycles in a connected $d$-regular vertex-transitive graph on $n$ vertices intersect in at least 
    $$\min_{d\geq2}\max\left\{\Omega\big((\log_dn)^{1/3}\big),\Omega(d^{2/3})\right\}=\Omega\big((\ln n/\ln\ln n)^{1/3}\big)$$ vertices, where the minimum is achieved when $d=\Theta\big((\ln n/\ln\ln n)^{1/2}\big)$. 
\end{proof}

\section{Concluding remarks}
In this paper, we obtain two results on the intersection of longest cycles in graphs $G$, which are closely related to the problems of estimating the size of transversals and the parameter $c(G)$.

In the proof of Theorem~\ref{thm:transitive babai}, given a cycle $X$, we exploit the “block structure” of vertex-transitive graphs to construct a longer cycle by assembling certain subpaths of $X$ and their images under automorphisms.
It would be interesting to see whether this approach can be further improved and applied in other contexts.

Next we discuss a natural extension of longest cycles to the so-called \emph{maximum $R$-subdivisions}, a concept studied in~\cite{long2021sublinear, kierstead2023improved}, and we present a corresponding generalization of Theorem~\ref{thm:lct upp}. 

\begin{define}\label{def:R-subdiv}
Let $R$ be a multigraph.
A {\bf maximum $R$-subdivision} in a graph $G$ is a subdivision of $R$ with the maximum number of vertices in $G$. 
\end{define}

Let $P_2$ be a single edge and $C_1$ a single vertex with a loop. 
Then a longest path (respectively, cycle) in a graph $G$ is a maximum $P_2$-subdivision (respectively, $C_1$-subdivision) in $G$.

The following result extends Theorem~\ref{thm:lct upp} to maximum $R$-subdivisions for any multigraph $R$. 
Notably, the analogous statement of Theorem~\ref{thm:lct upp} also holds for longest paths.

\begin{thm}\label{thm:sep size upper bound subdiv}
For any multigraph $R$, if two maximum $R$-subdivisions in a graph $G$ share $m$ vertices, then there exists a vertex cut of size $O_R(m^\frac{3}{2})$ separating them.
\end{thm}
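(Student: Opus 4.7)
The plan is to mimic the proof of Theorem~\ref{thm:lct upp} edge-path by edge-path. Fix identifications of $X$ and $Y$ with $R$-subdivisions, and let $X^{(e)}$, $Y^{(e)}$ denote the edge-paths corresponding to $e\in E(R)$. Set $M=V(X)\cap V(Y)$, and call a \emph{segment} of $X-M$ any maximal subpath of $X-M$ lying inside a single $X^{(e)}$; define segments of $Y-M$ analogously. Since each $X^{(e)}-M$ decomposes into at most $|V(X^{(e)})\cap M|+1$ subpaths, the total number of segments on each side is $m+O_R(1)$. Taking a maximum collection $\mathcal{P}$ of disjoint $(X-M,Y-M)$-paths, define the auxiliary bipartite graph $F=F(X,Y,\mathcal{P})$ as in Definition~\ref{def:aux F}. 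Each edge of $F$ is naturally labeled by the pair $(e,f)\in E(R)^2$ specifying the edge-paths that contain its segments, and we write $F_{e,f}$ for the corresponding sub-bipartite graph, with parts of sizes $s^X_e$ and $s^Y_f$. By Menger's theorem, it suffices to show $|E(F)|=O_R(m^{3/2})$.

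The key step is to bound $|E(F_{e,f})|$ for each fixed pair. Because $X^{(e)}$ and $Y^{(f)}$ are paths, their segments inherit linear orderings, and the four 4-cycle types in Definition~\ref{def:C4 types} transfer verbatim. To generalize Lemma~\ref{lem:type 00}, given a putative type-$(0,0)$ 4-cycle in $F_{e,f}$ with endpoints $u_{ik},u_{i\ell},u_{jk},u_{j\ell}$ on $X^{(e)}$ (in this linear order) and $v_{ik},v_{jk},v_{i\ell},v_{j\ell}$ on $Y^{(f)}$, let $a,b$ and $c,d$ denote the endpoints of $X^{(e)}$ and $Y^{(f)}$, and define
\[
\pi_1 := X^{(e)}[a,u_{ik}]\cup P_{ik}\cup Y^{(f)}[v_{ik},v_{jk}]\cup P_{jk}\cup X^{(e)}[u_{jk},u_{i\ell}]\cup P_{i\ell}\cup Y^{(f)}[v_{i\ell},v_{j\ell}]\cup P_{j\ell}\cup X^{(e)}[u_{j\ell},b],
\]
traversing $X^{(e)}[u_{jk},u_{i\ell}]$ in reverse; let $\pi_2$ be the symmetric $c$-to-$d$ object with the roles of $X^{(e)}$ and $Y^{(f)}$ swapped. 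Setting $Q_1:=(X-E(X^{(e)}))\cup\pi_1$ and $Q_2:=(Y-E(Y^{(f)}))\cup\pi_2$ produces two $R$-subdivisions satisfying $|Q_1|+|Q_2|=|X|+|Y|+2\sum_{P\in\mathcal{P}'}|P|>|X|+|Y|$, contradicting maximality. The analog of Lemma~\ref{lem:crossing int} is established by an elaborated version of this rerouting that uses eight $P$-paths instead of four.

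With these two lemmas, the supersaturation argument of Section~3 (Lemmas~\ref{lem:K29 error} and~\ref{lem:upp bound noncross}) applies to each $F_{e,f}$ and yields $|E(F_{e,f})|=O\big((s^X_e+s^Y_f)^{3/2}\big)$. Using $\sum_e s^X_e = O_R(m)$ together with $\sum_e (s^X_e)^{3/2} \le (\max_e s^X_e)^{1/2}\sum_e s^X_e = O_R(m^{3/2})$, summing over all $|E(R)|^2$ pairs gives $|E(F)|=O_R(m^{3/2})$. Applying Menger's theorem then yields a vertex cut of this size separating $X-M$ and $Y-M$, and adjoining $M$ produces the required separator.

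The main obstacle is verifying that $Q_1$ and $Q_2$ are genuine $R$-subdivisions, not merely subgraphs with the right edge count. Since the rerouting is localized to $X^{(e)}$ and $Y^{(f)}$, the only concern is that an internal vertex of $\pi_1$ might also be internal to some other edge-path $X^{(e')}$, which would violate the subdivision property for $Q_1$. However, every internal vertex of $\pi_1$ lies in the interior of $X^{(e)}$, in $V(Y)\setminus M$, or in the interior of some $P\in\mathcal{P}$, none of which meet the other $X^{(e')}$; the same reasoning handles $\pi_2$. Adapting Lemma~\ref{lem:crossing int} demands substantially more casework (eight paths across two 4-cycles), and sustaining this vertex-disjointness analysis through that casework is the principal technical hurdle, although the underlying topological idea is identical to the cycle setting.
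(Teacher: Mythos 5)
Your proposal is correct and takes essentially the same approach as the paper: localize to each pair of edge-paths $(e,f)\in E(R)^2$, run the winning-certificate and supersaturation machinery of Section~3 on the auxiliary bipartite graph (adapting Lemmas~\ref{lem:type 00} and~\ref{lem:crossing int} to the path setting with extra casework, and checking the rerouted paths remain internally disjoint from the rest of each subdivision), and conclude via Menger. The only difference is bookkeeping: you partition $E(F)$ across edge-pairs and sum the per-pair bounds, whereas the paper pigeonholes to a single dominant pair; both yield $O_R(m^{3/2})$.
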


\begin{proof}[Proof sketch]
    Let $X$ and $Y$ be two maximum $R$-subdivisions in $G$, and define $M = V(X) \cap V(Y)$ and $m = |M|$.
For each $e \in E(R)$, let $P_e(X)$ denote the subdivided path or cycle in $X$ corresponding to $e$ -- specifically, a path if $e = P_2$, or a cycle if $e = C_1$.

By the pigeonhole principle, among the $e(R)^2$ edge pairs $(e, e') \in E(R)^2$, any collection $\mathcal{P}$ of $k$ disjoint $(X-M,Y-M)$-paths must include at least $k/e(R)^2$ paths whose endpoints lie in $P_e(X)$ and $P_{e'}(Y)$, for some pair $(e, e') \in E(R)^2$.
Decompose $P_e(X) - V(Y)$ (resp. $P_{e'}(Y) - V(X)$) into segments $X_1, \ldots, X_m$ (resp. $Y_1, \ldots, Y_m$)\footnote{If $e$ or $e'$ is $P_1$, the number of segments becomes $m+1$.}, and construct the auxiliary bipartite graph $F := F(P_e(X), P_{e'}(Y), \mathcal{P})$ as in Definition~\ref{def:aux F}. Here, each vertex of $F$ represents a segment $X_i$ or $Y_j$, and $F$ contains at least $k/e(R)^2$ edges, each representing an $(X-M, Y-M)$-path whose endpoints lie in the corresponding segments.

We assert that $e(F) = O_R(m^{3/2})$. 
The proof largely follows Section~3 (which covers the case $R = C_1$).
Here, we focus on $e = e' = P_2$, highlighting only the differences and providing detailed explanations as needed; 
the other cases where $e$ or $e'$ is $C_1$ can be handled similarly.

    In this case, $P_e(X)$ and $P_{e'}(Y)$ are two paths in $G$, and the definition of a winning certificate should be adapted to a pair $(Q_1, Q_2)$ of paths satisfying the following conditions:
\begin{itemize}
\item $Q_1$ has the same endpoints as $P_{e'}(Y)$, and $Q_2$ has the same endpoints as $P_e(X)$.
\item $V(Q_1) \cap V(Y) \subseteq V(P_{e'}(Y))$, and $V(Q_2) \cap V(X) \subseteq V(P_e(X))$.
\item $E(Q_1) \cup E(Q_2) \supseteq E(P_e(X)) \cup E(P_{e'}(Y))$, and $|Q_1| + |Q_2| > |P_e(X)| + |P_{e'}(Y)|$.
\end{itemize}
    Given such a winning certificate, replacing $P_e(X)$ by $Q_2$ in $X$ and $P_{e'}(Y)$ by $Q_1$ in $Y$ produces two $R$-subdivisions $X'$ and $Y'$ such that $e(X') + e(Y') > e(X) + e(Y)$, contradicting the maximality of $X$ and $Y$.
    The red and blue paths in Figure~\ref{fig:type00} and Figure~\ref{fig:both} still serve as examples of such winning certificates.
    The main difference, compared to the proof in Section 3, arises in the proof of Lemma~\ref{lem:crossing int}, where we can no longer assume that the $Y$-segments are arranged as in Figure~\ref{fig:both} and there would be 32 cases rather than 4 cases to be verified.
    To see this, 
    in addition to the configuration in Figure~\ref{fig:both}, the segments on $Y$ may also be arranged as $(Y_{k_1}, Y_{\ell_1}, Y_{\ell_2}, Y_{k_2})$.
    For each 4-cycle, there are two possible relative orderings of the two vertex pairs on the $X$-segments (and similarly for the $Y$-segments).
    This gives four distinct relative orderings per 4-cycle.
    In total, there are $2 \cdot 4^2 = 32$ possible configurations to consider.
We omit the detailed verification that, using constructions almost identical to Lemma~\ref{lem:crossing int}, each case indeed produces a winning certificate.

It follows that $k/e(R)^2\leq e(F)=O(m^{3/2})$, which implies $k=O_R(m^{3/2})$.
The conclusion then follows by Menger's Theorem.
\end{proof}

\bibliographystyle{abbrv}
\bibliography{ref}

\end{document}